\numberwithin{equation}{section}
\theoremstyle{plain}
\newtheorem{theorem}{Theorem}[section]
\newtheorem{lemma}{Lemma}[section]
\newtheorem{problem}{Problem}[section]
\theoremstyle{Corollary}
\newtheorem{cor}[theorem]{Corollary}
\newtheorem{definition}[theorem]{Definition}
\newcommand{\beq}{\begin{equation}}
	\newcommand{\eeq}{\end{equation}}
\newcommand{\beqs}{\begin{eqnarray*}}
	\newcommand{\eeqs}{\end{eqnarray*}}
\newcommand{\beqn}{\begin{eqnarray}}
	\newcommand{\eeqn}{\end{eqnarray}}
\newcommand{\beqa}{\begin{array}}
	\newcommand{\eeqa}{\end{array}}
\def\S{\mathbb S}
\def\R{\mathbb R}
\def\K{\mathcal K}
\begin{document}
	
	\title{Flow by Gauss curvature to the $L_p$-Gaussian Minkowski problem}

	\author{Weimin Sheng}
	\address{Weimin Sheng: School of Mathematical Sciences, Zhejiang University, Hangzhou 310027, China.}
	\email{weimins@zju.edu.cn}
	
	\author{Ke Xue}
	\address{Ke Xue: School of Mathematical Sciences, Zhejiang University, Hangzhou 310027, China.}
	\email{21935078@zju.edu.cn}
	
	\keywords{Gauss curvature flows, $L_p$-Gaussian Minkowski problem, logarithmic Gauss Minkowski problem, Monge-Amp\'ere equation.}
	
	\subjclass[2010]{35K96, 52A30, 53C21}
	
	\thanks{This paper is a revised version of an earlier paper submitted on August 21, 2021.  In this new version we added Theorem 1.4 for the case $ -n-1<p\le 0$  and made some revisions. }

	\begin{abstract}
		In this paper
		we study the $L_p$-Gaussian Minkowski problem, which arises in the $L_p$-Brunn-Minkowski theory in Gaussian probability space.  We use Aleksandrov's variational method with Lagrange multipliers to prove the existence of the logarithmic Gauss Minkowski problem. 
		We construct a suitable Gauss curvature flow of closed, convex hypersurfaces in the Euclidean space $\R^{n+1}$, and prove its long-time existence and converges smoothly to a smooth solution of the normalized $L_p$ Gaussian Minkowski problem in cases  of $p>0$ and $-n-1<p\leq 0$ with even prescribed function respectively.
		We also provide a parabolic proof in the smooth category to the $L_p$-Gaussian Minkowski problem in cases of $p\geq n+1$ and $0<p<n+1$ with even prescribed function, respectively. 
		
	\end{abstract}
	\maketitle
	
	\baselineskip16pt
	\parskip3pt
	\section{Introduction}
	The Minkowski problem is a characterization problem for a geometric measure generated
	by convex bodies. The study of the Minkowski problem has a long history and its resolution has led  to a series of influential works, such as Minkowski\cite{1903Minkowski}, Lewy\cite{1938Lewy}, Nirenberg\cite{1953Nirenberg}, Pogorelov\cite{1978Pogorelov} and Cheng-Yau \cite{1976ChengYau}, etc.. The $L_p$ surface
	area measure and its associated Minkowski problem in the $L_p$ Brunn-Minkowski theory
	were introduced by Lutwak\cite{1993Lutwak} in 1993.  Since then, the $L_p$-Minkowski problem
	has been studied extensively in \cite{ 2005Hug, 2006ChWang, 2013BLYZ, 2015Huang, 2015JianWang,2015ZhuG, 2019BIS, SY20}, and so on.
	Recently, Huang-Xi-Zhao in \cite{2020HXZ} introduced a Minkowski problem in Gaussian probability space. In order to explain the problem, we introduce some notations as follows.
	
	We say $ K \subset \R^{n+1}$ is a convex body if it is a compact convex set with nonempty
	interior. Denote by $\K$ the set of all convex bodies in $\R^{n+1}$ containing the origin $O$, $\K_0$ the convex bodies that contain the origin in interior, and $\K_e$ the $O$-symmetry convex bodies.
	For $K\in \K$, we define its radial function $r_K: S^n\rightarrow [0, \infty)$ and support function $h_K: S^n\rightarrow [0, \infty)$, respectively, by
	\[
	r_K(x)=\max\{a\in \R: a\cdot x\in K\}\, \, \textrm{and}\, \, h_K(x)=\max\{x\cdot y: y\in K\}, \, \, x\in S^n,
	\]
	where $x\cdot y$ denotes the inner product in $\R^{n+1}$.
	
	For $K\in \K$, let $\partial K$ be its boundary.
	The Gauss map of $\partial K$, denoted by $\nu_{K}: \partial K\rightarrow S^n$ is defined as follows:
	for $y\in \partial K$,
	\[
	\nu_K(y)=\{x\in S^n: x\cdot y=h_K(x)\}.
	\]
	Let $\nu_K^{-1}: S^n\rightarrow \partial K$
	be the reverse Gauss map such that
	\[
	\nu_K^{-1}(x)=\{ y\in S^n: x\cdot y=h_K(x)\}, \, x\in S^n.
	\]
	
	We often omit the subscript $K$ in $r_K$, $h_K$, $\nu_K$, $\nu_K^{-1}$,
	if no confusion occurs.
	
	Now the Gaussian volume functional, denoted by $\gamma_{n+1}$, is defined by
	\[
	\gamma_{n+1}(K)=\frac{1}{(\sqrt{2\pi})^{n+1}}\int_{K}e^{\frac{-|x|^2}{2}}dx.
	\]
	Using the variational formula from \cite{2016HLYZ}, the variational formula of $\gamma_{n+1}$ is
	obtained as
	\[\lim_{t \to 0}\frac{\gamma_{n+1}(K+tL)-\gamma_{n+1}(K)}{t}=\int_{\mathbb{S}^{n}}h_L(v)dS_{\gamma_{n+1},K}(v),\]
	for any convex bodies $K, L\in \K$, where $K+t\cdot L=\{x+ty: x\in K, y\in L\}$,  and $S_{\gamma_{n+1},K}$ is the Gaussian surface area measure,
	\[
	S_{\gamma_{n+1},K}(\eta)=\frac{1}{(\sqrt{2\pi})^{n+1}}\int_{\nu_{K}^{-1}(\eta)}e^{\frac{-|x|^2}{2}}d\mathcal{H}^{n}(x), \, \, \eta\subset S^n.
	\]
	The Gaussian Minkowski problem asks that given a finite Borel measure $\mu$, what are
	the necessary and sufficient conditions on $\mu$ so that there exists a convex body $K$ with the origin
	$o \in int (K)$ such that
	\[
	\mu=S_{\gamma_{n+1},K}?
	\]
	If $K$ exists, to what extent is it unique?
	
	It is shown in \cite{2020HXZ} that for any finite even Borel measure $\mu$ on $S^{n}$ which is not concentrated
	on any closed hemisphere and $|\mu|<\frac{1}{\sqrt{2\pi}}$, there exists a unique origin-symmetry
	convex body with $\gamma_{n+1}(K) > \frac{1}{2}$ such that $S_{\gamma_{n+1},K}=\mu$.
	
	Gaussian Minkowski problem is different from the Minkowski
	problem in Lebesgue measure space in the following aspects. By the works of  Ball \cite{1993Ball}
	and Nazarov \cite{2003Nazarov}, the allowable $\mu$ in the Gaussian Minkowski problem cannot have an arbitrarily big total mass. In fact the Gaussian surface
	area of any convex set in $\mathbb{R}^{n+1}$ is not more than $4(n+1)^{\frac{1}{4}}$.
	Furthermore, by the definition of Gaussian surface area, we can know that not only
	small convex body, but also large one, can has smaller Gaussian surface areas. For instance, let $B_r$ be a centered ball with radius $r$, the Gaussian area density of $B_r$ is $f_r=\frac{1}{(\sqrt{2\pi})^{n+1}}e^{\frac{-r^2}{2}}r^n$. One can observe that $\frac{1}{(\sqrt{2\pi})^{n+1}}e^{\frac{-r^2}{2}}r^n\rightarrow0$ as $r\rightarrow0$ or $r\rightarrow\infty$. It is easy to see $f_r<\frac{1}{(\sqrt{2\pi})^{n+1}}$ when $n<3$. Secondly, Gaussian probability measure has neither
	translation invariance nor homogeneity. 
	
	It is natural to introduce the $L_p$-Brunn-Minkowski theory in Gaussian probability
	space, see \cite{2021Liu}. The $L_p$-Gaussian surface area measures can be defined as 
	\[S_{p,\gamma_{n+1},K}(\eta)=\frac{1}{(\sqrt{2\pi})^{n+1}}\int_{\nu_{K}^{-1}(\eta)}e^{\frac{-|x|^2}{2}}(x\cdot\nu_{K}(x))^{1-p}d\mathcal{H}^{n}(x).\]
	The $L_p$-Gaussian Minkowski problem can be stated as follows:{\it{
	Given a finite Borel measure $\mu$ on
	$S^n$, what are the necessary and sufficient conditions on $\mu$ does there exist a convex
	body $K$ in $\mathbb{R}^{n+1}$ such that $L_p$-Gaussian surface area measure $S_{p,\gamma_{n+1},K}(\cdot)=\mu(\cdot)$? If $K$
	exists, is it unique?}}

	If $f$ is the density function of the given measure $\mu$, the $L_p$-Gaussian Minkowski problem becomes the following Monge-Amp\`{e}re equation on $\mathbb{S}^n $:
	\begin{equation}\label{flow01}
		\frac{1}{(\sqrt{2\pi})^{n+1}}e^{-{\frac{|\nabla h|^2+h^2}{2}}}h^{1-p}\det{(h_{ij}+h\delta_{ij})}=f,
	\end{equation}
	where $h$, $f$ are the support function and the prescribed function, respectively.
	
	In \cite{2021Liu}, Liu has proved the following result.\\
	{\bf{Theorem A}} {\it
		Let $\alpha\in(0,1)$, for $p\geq1$, $f\in C^{2,\alpha}(\mathbb{S}^n)$ is a positive even function and satisfies $|f|_{L_1}<\sqrt{\frac{2}{\pi}}r^{-p}ae^{\frac{-a^2}{2}}$, where $r$ and $a$ are chosen such that $\gamma_{n+1}(rB)=\gamma_{n+1}(P)=\frac{1}{2}$, symmetry trip $P=\{x\in\mathbb{R}^{n+1}:|x_1|\leq a\}$. Then
		there exists a unique $C^{4,\alpha}$ convex body $K\in\mathcal{K}^{n+1}_e$ with $\gamma_{n+1}(K)>\frac{1}{2}$, its support function $h_K$ satisfies the equation (\ref{flow01}).}

	In this paper, we will improve the result in Theorem \ref{mainthm}.
	
	Since the $L_p$ Gaussian surface area measures has neither translation invariance nor homogeneity, we may also consider the normalized $L_p$-Gaussian Minkowski problem in the following way:
	\begin{problem}
		Given a finite Borel measure $\mu$ on
		$S^n$, what are the necessary and sufficient conditions on $\mu$ does there exist a convex
		body K in $\mathbb{R}^{n+1}$ and a positive constant $c$ such that $\mu(\cdot)=cS_{p,\gamma_{n+1},K}(\cdot)$?
	\end{problem}
	Obviously, this problem becomes the following Monge-Amp\`{e}re equation on $\mathbb{S}^n $:
	\begin{equation}\label{yibanfangcheng1}
		ce^{-{\frac{|\nabla h|^2+h^2}{2}}}h^{1-p}\det{(h_{ij}+h\delta_{ij})}=f,
	\end{equation}
	where $f$ is the density of the measure $\mu$.

	In \cite{2021Liu}, by use of Aleksandrov's variational method, Liu has proved \\
	{\bf{Theorem B}} {\it For $p>0$, let $\mu$ be a nonzero finite Borel measure on $S^{n}$ and be not concentrated in any closed hemisphere. Then there exist a convex body $K\in \K_0$ and a positive constant $\lambda$ such that
		\[
		\mu=\frac{\lambda}{p}S_{p, \gamma_{n+1}, K}.
		\]
	}
	
	In this paper, we study the case $p=0$, i. e. the logarithmic Gauss Minkowski problem. We will use Aleksandrov's variational method to prove the existence of solution with Lagrange
	multipliers.
	\begin{definition} A finite Borel measure $\mu$ on $\mathbb{S}^n$ is said to satisfy \textbf{the subspace concentration
			inequality} if, for every subspace $\xi$ of $\mathbb{R}^{n+1}$, such that $0<\dim\xi<n+1$,
		\begin{equation}\label{def1}
			\mu(\xi\cap\mathbb{S}^n)\leq \frac{1}{n+1}\mu(\mathbb{S}^n)\dim\xi.
		\end{equation}
		The measure $\mu$ on $\mathbb{S}^n$ is said to satisfy \textbf{the strict subspace concentration inequality} if
		the inequality in (\ref{def1}) is strict for each subspace $\xi\subset\mathbb{R}^{n+1}$, and $0< \dim\xi< n+1$.
	\end{definition}
	\begin{theorem}\label{p0}
		Let $p=0$, if $\mu$ is even finite Borel measure on $\mathbb{S}^n$ that satisfies the strict subspace concentration inequality. Then there exist an $O$-symmetric convex body K and a positive constant $c$ such that
		\[
		\mu=c S_{p,\gamma_{n+1},K}.
		\]
		
	\end{theorem}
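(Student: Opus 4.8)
The plan is to follow Aleksandrov's variational scheme adapted to the Gaussian setting, maximizing a Gaussian-entropy type functional over $O$-symmetric convex bodies with a volume-type constraint, and then reading off the Euler--Lagrange equation via a Lagrange multiplier. Concretely, for an even $\mu$ I would consider the functional
\[
\Phi(h) = \int_{\mathbb{S}^n} \log h(v)\, d\mu(v)
\]
over the class of support functions $h=h_K$ of bodies $K\in\K_e$, constrained by $\gamma_{n+1}(K)=\frac{1}{2}$ (or, more robustly, constrained to the set where $\gamma_{n+1}(K)\ge \frac{1}{2}$, which is the natural ``half-space-normalized'' level in the Gaussian theory; compare Theorem A and Theorem B). The heuristic is that the first variation of $\gamma_{n+1}$ is $\int h_L\, dS_{\gamma_{n+1},K}$, while the first variation of $\Phi$ along the logarithmic (Minkowski-type) deformation $\log h_t = \log h + t\vphi$ is $\int \vphi\, d\mu$; matching these at a maximizer forces $\mu = c\, S_{0,\gamma_{n+1},K}$ with $c$ the Lagrange multiplier.

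The steps, in order, would be: (i) set up the admissible class of even support functions and record that $\gamma_{n+1}$ is continuous and strictly monotone in $K$, so the constraint set is nonempty and closed under Hausdorff convergence; (ii) prove a compactness/coercivity statement --- a maximizing sequence $K_j$ of bodies with $\gamma_{n+1}(K_j)=\frac12$ and $\Phi(h_{K_j})$ bounded below must be uniformly bounded and uniformly bounded away from degeneracy, so that (up to subsequence) $K_j\to K$ with $K\in\K_e$ having nonempty interior; (iii) lower semicontinuity of $-\Phi$ (equivalently upper semicontinuity of $\Phi$) under this convergence, so the limit $K$ is a genuine maximizer; (iv) compute the first variation of $\Phi$ and of the constraint functional along logarithmic deformations $h_t = h\,e^{t\vphi}$ (for which a Gaussian Minkowski-type variational formula produces $\int \vphi\, dS_{0,\gamma_{n+1},K}$), and conclude by the Lagrange multiplier theorem that $\mu = c\, S_{0,\gamma_{n+1},K}$; (v) check that the multiplier $c$ is strictly positive, using that $\mu$ is nonzero and that $S_{0,\gamma_{n+1},K}$ is a nonnegative measure with total mass controlled away from $0$ and $\infty$ by the nondegeneracy of $K$.

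The main obstacle is step (ii), the a priori bounds for a maximizing sequence. In the classical $L_p$ setting the homogeneity of Lebesgue volume does the work; here $\gamma_{n+1}$ is neither homogeneous nor translation invariant, so ``$K_j$ too large'' is not automatically ruled out --- indeed, as the excerpt emphasizes (the ball example $f_r\to 0$ as $r\to 0$ or $r\to\infty$), large bodies can carry small Gaussian surface area, and the constraint $\gamma_{n+1}(K_j)=\frac12$ does not by itself prevent $K_j$ from having very long thin directions. This is exactly where the \emph{strict subspace concentration inequality} on $\mu$ enters: if $K_j$ degenerated by collapsing toward a proper subspace $\xi$ (or, dually, elongating along $\xi^\perp$), then a careful estimate of $\Phi(h_{K_j}) = \int \log h_{K_j}\, d\mu$ splitting $\mathbb{S}^n$ into a neighborhood of $\xi\cap\mathbb{S}^n$ and its complement would force $\Phi(h_{K_j})\to-\infty$ unless $\mu$ puts too much mass near $\xi\cap\mathbb{S}^n$, contradicting \eqref{def1} in its strict form. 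Making this dichotomy quantitative --- choosing the right competitor normalization so that the Gaussian constraint is preserved while the entropy blows down --- is the technical heart of the argument; the remaining steps (iii)--(v) are then comparatively routine, using the Gaussian variational formula from \cite{2016HLYZ} together with standard weak-convergence facts for surface area measures.
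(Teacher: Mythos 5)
Your overall scheme (Aleksandrov's variational method with the Gaussian-volume constraint $\gamma_{n+1}(K)=\tfrac12$, log-deformations $h_t = h\,e^{t\varphi}$, Lagrange multiplier, strict subspace concentration inequality to control degeneration) matches the paper's structure, and your steps (iii)--(v) are essentially right. The genuine gap is in the sign of the optimization and in your heuristic for step (ii). You propose to \emph{maximize} $\Phi(h)=\int_{\mathbb{S}^n}\log h\,d\mu$ over $\{K\in\mathcal{K}_e:\gamma_{n+1}(K)=\tfrac12\}$, and you claim that a degenerating (elongating/collapsing) sequence forces $\Phi\to-\infty$, with strict SCI ruling out that catastrophe. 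Both claims are backwards. Under the constraint $\gamma_{n+1}(K)=\tfrac12$, the only way to degenerate is to stretch along some directions while staying of positive thickness along others; the stretched directions contribute $\log h_{K_j}\to+\infty$, and --- precisely because strict SCI forbids $\mu$ from piling up on the low-dimensional subspace where $h_{K_j}$ stays bounded --- the positive contribution \emph{dominates} and $\Phi(h_{K_j})\to+\infty$ along a subsequence. This is exactly the content of B\"or\"oczky--Lutwak--Yang--Zhang's Lemma 6.2 (quoted as Lemma \ref{lem6.2} in the paper), which asserts that for cross-polytopes with $h_{1,l}\cdots h_{n+1,l}\geq 1$ and $h_{n+1,l}\to\infty$, the quantity $\int\log h_{Q_l}\,d\mu$ is \emph{not bounded above}. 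Consequently a \emph{maximizing} sequence can escape to infinity and the supremum need not be attained; maximization does not give compactness.

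The correct variational problem, as in the paper, is the \emph{minimization} $\min\{\phi(Q):Q\in\mathcal{K}_e,\;\gamma_{n+1}(Q)=\tfrac12\}$. Then if a minimizing sequence $Q_i$ is unbounded, one passes to John ellipsoids $E_i\subset Q_i\subset\sqrt{n+1}\,E_i$, extracts the inscribed cross-polytopes $C_i\subset Q_i$, uses the constraint $\gamma_{n+1}(Q_i)=\tfrac12$ together with $Q_i\subset (n+1)C_i$ to get the uniform lower bound $h_{1,i}\cdots h_{n+1,i}\geq A>0$, and then BLYZ's Lemma 6.2 applied to $C_i'=A^{-1/(n+1)}C_i$ gives $\phi(C_i')\to+\infty$ along a subsequence. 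Since $C_i\subset Q_i$ forces $\phi(Q_i)\geq\phi(C_i)=\phi(C_i')-\mathrm{const}$, this contradicts $\phi(Q_i)\to\inf<\infty$. Non-degeneracy of the limit $K$ is then immediate from $\gamma_{n+1}(K)=\tfrac12>0$. Once you flip the sign of the optimization and replace your ``$\Phi\to-\infty$'' heuristic by the BLYZ blow-up to $+\infty$, the rest of your outline --- Wulff-shape reduction, the Gaussian $L_0$-variational formula (Lemma \ref{p2}), Lagrange multipliers, and positivity of $c$ from both sides being nonnegative nonzero measures --- is sound and coincides with the paper's proof.
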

	We also use the following normalised Gauss curvature flow to solve the existence of smooth solution for (\ref{yibanfangcheng1}). Let $M_0$ be a smooth, closed and convex hypersurface in $\mathbb{R}^{n+1}$ which encloses the origin, $n\geq 1$. We study following curvature flow
	\begin{equation}\left\{
		\begin{array}{l}
			\displaystyle \frac{\partial X}{\partial t}(x,t)=-\theta(t)e^{\frac{r^2}{2}}K(x,t){\langle X,\nu\rangle}^pf(\nu)\nu+X(x,t),\\
			X(x,0)=X_0(x),
		\end{array}\right.\label{yibanfangcheng2}
	\end{equation}
	where $K(\cdot,t)$ is the Gauss curvature of hypersurface $M_t$, parametrized by $X(\cdot,t):\mathbb{S}^{n}\rightarrow\mathbb{R}^{n+1}$,   $\nu(\cdot,t)$ is the unit outer normal at $X(\cdot,t)$, and $f$ is a given positive smooth function on $\mathbb{S}^{n}$,  $r=|X(x,t)|$ the distance from the point $X(x,t)$ to the origin, and $\theta(t)$ is defined as
	\[
	\theta(t)=\int_{\mathbb{S}^n}e^{-\frac{r^2}{2}}r^{n+1}(\xi,t)d\xi\bigg/\int_{\mathbb{S}^n}h^p(x,t)f(x)dx.
	\]
	$r$ can be regarded as a function of $\xi=\xi(x,t):=X(x,t)/|X(x,t)|\in\mathbb{S}^{n}$, which we call it as the radial function. 
	The following functional plays an important role in our argument
	\begin{equation}
		\Psi(M_t)=\left\{
		\begin{array}{lr}
			\displaystyle \frac{1}{p}\int_{\mathbb{S}^n}f(x)h^p(x,t)dx, \ \text{if}\ p\neq0.\\
			\displaystyle\int_{\mathbb{S}^n}f(x)\log h(x,t)dx,\ \text{if}\ p=0.
		\end{array}\right.\label{c}
	\end{equation}
	where $h(\cdot,t)$ is the support function of $M_t$. 
	We have the following theorem.
	
	\begin{theorem}\label{yibanfangchengthm}
		Let $M_0$ be a smooth, closed, uniformly convex hypersurface  in $\mathbb{R}^{n+1}$ enclosing the origin, and $\gamma_{n+1}(\Omega_0)\geq\frac{1}{2}$, where $\Omega_0$ denotes the convex body enclosed by $M_0$. Let $f$ be a smooth positive function on $\mathbb{S}^n$ and $p > 0$. Then the normalised flow (\ref{yibanfangcheng2}) has a
		unique smooth solution $M_t = X(\mathbb{S}^n, t)$ for any time, and a subsequence of $M_t$ converge smoothly to the smooth solution of (\ref{yibanfangcheng1}) with $\frac{1}{c}=\lim_{t_i\rightarrow\infty}\theta(t_i)>0$.
	\end{theorem}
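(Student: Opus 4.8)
\medskip
\noindent\textbf{Proof plan.} The plan is to recast (\ref{yibanfangcheng2}) as a scalar parabolic Monge--Amp\`ere equation for the support function, establish $t$-independent a priori estimates, deduce long-time existence by parabolic regularity theory, and extract a convergent subsequence via the monotonicity of $\Psi$. In the Gauss map parametrization $X=\nabla h+h\,x$, $\nu=x$, so that $\langle X,\nu\rangle=h$, $r^2=|X|^2=|\nabla h|^2+h^2$ and the Gauss curvature is $K=\big(\det(h_{ij}+h\delta_{ij})\big)^{-1}$, the flow (\ref{yibanfangcheng2}) becomes
\[
\partial_t h=-\theta(t)\,e^{r^2/2}\,\frac{h^p f}{\det(h_{ij}+h\delta_{ij})}+h,\qquad h(\cdot,0)=h_0,
\]
on $\mathbb{S}^n$. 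Since $M_0$ is uniformly convex, $\{(h_0)_{ij}+h_0\delta_{ij}\}>0$, so this is a nondegenerate parabolic equation near $t=0$; standard theory yields a unique smooth solution on a maximal interval $[0,T^*)$ along which $M_t$ stays uniformly convex and encloses the origin.

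Two structural identities drive the argument. First, $\theta(t)$ is chosen so that the Gaussian volume is conserved: using $K\,d\mathcal H^n=dx$ under the Gauss map and the elementary identity $\int_0^{\rho}e^{-s^2/2}(n+1-s^2)s^n\,ds=e^{-\rho^2/2}\rho^{n+1}$ (hence $\int_{M_t}e^{-r^2/2}\langle X,\nu\rangle\,d\mathcal H^n=\int_{\mathbb{S}^n}e^{-r^2/2}r^{n+1}\,d\xi$), the two terms of $\partial_t h$ are seen to contribute opposite amounts to $\frac{d}{dt}\gamma_{n+1}(\Omega_t)$, so $\gamma_{n+1}(\Omega_t)\equiv\gamma_{n+1}(\Omega_0)\ge\frac12$ for all $t\in[0,T^*)$. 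Second, with $g:=f\,h^{p-1}e^{r^2/2}K$ and the probability measure $d\mu_t:=h^pf\,dx\big/\!\int_{\mathbb{S}^n}h^pf\,dx$ one has $\theta(t)=\int_{\mathbb{S}^n}g^{-1}\,d\mu_t$, and a direct computation gives
\[
\frac{d}{dt}\Psi(M_t)=\Big(\int_{\mathbb{S}^n}h^pf\,dx\Big)\Big(1-\int_{\mathbb{S}^n}g^{-1}\,d\mu_t\int_{\mathbb{S}^n}g\,d\mu_t\Big)\le 0
\]
by Cauchy--Schwarz, with equality precisely when $g$ is constant on $\mathbb{S}^n$, i.e. precisely when $M_t$ solves (\ref{yibanfangcheng1}).

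The technical heart, and the step I expect to be the main obstacle, is to show these quantities and the geometry of $M_t$ remain controlled on $[0,T^*)$. (i) $C^0$ estimates $0<c_1\le h\le c_2$: the upper bound follows since $\Psi$ is nonincreasing, so $\int_{\mathbb{S}^n}h^pf\,dx$ is bounded and hence, by convexity, so is the circumradius of $\Omega_t$; the lower bound is exactly where $\gamma_{n+1}(\Omega_0)\ge\frac12$ enters, because if $\min_{\mathbb{S}^n}h(\cdot,t)$ were small then $\Omega_t$ would sit in a slab $\{\langle y,x_0\rangle\le\varepsilon\}$ of bounded circumradius, forcing $\gamma_{n+1}(\Omega_t)<\frac12$---a contradiction. (ii) Convexity together with the $C^0$ bound gives $|\nabla h|\le c_2$ and $c_1\le r\le c_2$, whence $0<\theta_1\le\theta(t)\le\theta_2<\infty$. (iii) One must bound the principal curvatures of $M_t$ above and below, equivalently $\det(h_{ij}+h\delta_{ij})$ above and below; this is carried out by the maximum principle applied to suitable auxiliary functions built from $K$ and $1/K$ along the flow, the Gaussian weight $e^{r^2/2}$ and the factor $h^p$ being harmless once (i)--(ii) hold. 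Establishing this uniform $C^2$ bound---the preservation of uniform convexity with bounds independent of $t$---is the genuinely delicate part.

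Granting these estimates, the equation is uniformly parabolic and concave in $\{h_{ij}\}$, so Krylov--Safonov and Schauder estimates yield uniform $C^k$ bounds for every $k$ on $[0,T^*)$; hence the solution extends past $T^*$, so $T^*=\infty$, with all bounds uniform on $[0,\infty)$. Since $\Psi(M_t)$ is nonincreasing and $\Psi\ge0$ (as $p>0$, $f>0$, $h\ge c_1$), we get $\int_0^\infty\!\big(-\tfrac{d}{dt}\Psi\big)\,dt<\infty$, so there is $t_i\to\infty$ with $\tfrac{d}{dt}\Psi(M_{t_i})\to 0$; the Cauchy--Schwarz defect in the identity above, together with the uniform positive bounds on $g$ (a consequence of (iii)), forces $\mathrm{osc}_{\mathbb{S}^n}g\to0$ along $t_i$. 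Passing to a further subsequence, $M_{t_i}\to M_\infty$ in $C^\infty$ with $M_\infty$ uniformly convex, support function $h_\infty$, and $f\,h_\infty^{p-1}e^{r_\infty^2/2}K_\infty\equiv\theta_\infty^{-1}$, where $\theta_\infty=\lim_i\theta(t_i)\in[\theta_1,\theta_2]$; rearranging shows $h_\infty$ solves (\ref{yibanfangcheng1}) with $\tfrac1c=\theta_\infty>0$, as claimed, while uniqueness of the flow comes from the short-time step and propagates by the estimates.
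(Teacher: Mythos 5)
Your proposal follows essentially the same route as the paper's proof: reduce (\ref{yibanfangcheng2}) to the scalar parabolic Monge--Amp\`ere equation (\ref{8}), use the invariance of $\gamma_{n+1}(\Omega_t)$ (Lemma \ref{gamma-invariant}) to keep $\gamma_{n+1}(\Omega_t)\ge\frac12$ and force a uniform lower bound on $h$, use monotonicity of $\Psi$ (Lemma \ref{2.2}) for the upper bound on $h$, bound $\theta(t)$ and the principal curvatures by maximum-principle arguments (the paper invokes Lemmas A.1 and A.2 of \cite{2021ChenLi}, including a pass to the polar dual for the upper bound on the principal radii), conclude long-time existence via Krylov--Safonov/Schauder, and extract a convergent subsequence from the monotone, bounded functional $\Psi$. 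Your rewriting of the Lyapunov computation as a Cauchy--Schwarz defect for the probability measure $d\mu_t$ is a clean equivalent of the paper's H\"older-inequality step, and your recognition that the $C^2$ estimate is the delicate step matches where the paper leans on the cited auxiliary lemmas; the only stylistic divergence is the (unnecessary) aside about the primitive of $e^{-s^2/2}(n+1-s^2)s^n$, which the paper replaces by the Jacobian identity (\ref{Jac}) together with $r_t/r=h_t/h$.
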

	
	When $-n-1<p\leq 0$, we can aslo obtain the following theorem.
	\begin{theorem}\label{yibanfangchengthm2}
		\indent Let $M_0$ be as in Theorem \ref{yibanfangchengthm} and $M_0$ is origin-symmetric. Let $f$ be a smooth positive  even function on $\mathbb{S}^n$ and $-n-1<p\leq 0$. Then the normalised flow (\ref{yibanfangcheng2}) has a
		unique smooth solution $M_t = X(\mathbb{S}^n, t)$ for any time, and a subsequence of $M_t$ converge smoothly to the smooth solution of (\ref{yibanfangcheng1}) with $\frac{1}{c}=\lim_{t_i\rightarrow\infty}\theta(t_i)>0$.
	\end{theorem}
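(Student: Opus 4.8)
The plan is to convert the geometric flow (\ref{yibanfangcheng2}) into a scalar parabolic equation for the support function $h(\cdot,t)$ of $M_t$, run the standard a priori estimate scheme ($C^0$, then gradient, then curvature, then Krylov--Safonov/Evans--Krylov regularity), and finally extract a convergent subsequence using the monotonicity of $\Psi$. Since $\langle X,\nu\rangle=h$ and the Gauss curvature equals $1/\det(h_{ij}+h\delta_{ij})$ in support-function coordinates, (\ref{yibanfangcheng2}) is equivalent to
\[
\frac{\partial h}{\partial t}=-\theta(t)\,e^{\frac{|\nabla h|^2+h^2}{2}}\,\frac{h^{p}f(x)}{\det(h_{ij}+h\delta_{ij})}+h ,\qquad h(\cdot,0)=h_{M_0},
\]
which is uniformly parabolic as long as $h$ is pinched between two positive constants and the eigenvalues of $(h_{ij}+h\delta_{ij})$ are pinched; short-time existence and uniqueness then follow from standard parabolic theory, and the maximal time of existence is characterised by the loss of one of these bounds. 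Because $M_0$ is origin-symmetric and $f$ is even, uniqueness forces $M_t$ to stay origin-symmetric, so $h(\cdot,t)$ is an even function on $\S^n$ throughout the flow; this evenness is what makes the $C^0$-estimate possible for $p\le 0$.

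Two structural facts drive the estimates. First, the normalisation $\theta(t)$ is chosen precisely so that the Gaussian volume is conserved: using the divergence identity
\[
\int_{\Omega_t}e^{-|x|^2/2}\,(n+1-|x|^2)\,dx=\int_{\S^n}e^{-r^2/2}r^{n+1}\,d\xi=\int_{\partial\Omega_t}e^{-|x|^2/2}\,h\,d\mathcal H^{n},
\]
one checks $\tfrac{d}{dt}\gamma_{n+1}(\Omega_t)=0$, hence $\gamma_{n+1}(\Omega_t)\equiv\gamma_{n+1}(\Omega_0)\ge\tfrac12$. Second, a direct computation shows $\Psi(M_t)$ in (\ref{c}) is monotone along the flow, with $\tfrac{d}{dt}\Psi=0$ exactly at stationary points; combined with the conserved volume this yields a two-sided bound on $\Psi(M_t)$. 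For the $C^0$-estimate, the upper bound $h\le C$ comes from the conserved Gaussian volume together with origin-symmetry (a symmetric body with a very large width in some direction and $\gamma_{n+1}\ge\tfrac12$ would violate the bound on $\Psi$, via a John-ellipsoid / subspace argument), while the lower bound $h\ge c>0$ uses that for $p\le 0$ the factor $h^{p}$ in $\Psi$ forces $|\Psi(M_t)|\to\infty$ as $\min h\to 0$, contradicting its boundedness. The restriction $-n-1<p$ enters exactly here (and again in the curvature estimate), paralleling the role of the same exponent in the $L_p$-Minkowski problem. The gradient bound $|\nabla h|\le C$ is then automatic from convexity and the $C^0$ pinching.

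The heart of the argument is the $C^2$-estimate: a uniform two-sided bound on the principal radii of curvature, i.e.\ on the eigenvalues of $(h_{ij}+h\delta_{ij})$. I would obtain the upper bound by applying the parabolic maximum principle to an auxiliary function built from the largest eigenvalue (or from $-\partial_t h+h$), using the $C^0$ and $C^1$ bounds to absorb the Gaussian first-order terms $e^{(|\nabla h|^2+h^2)/2}$ and their derivatives; the accompanying lower bound on the Gauss curvature follows by controlling $\partial_t h$ from below along the flow and feeding it back into the equation. These bounds render the equation uniformly parabolic and, after the usual change of unknown, concave in the second derivatives, so Krylov--Safonov and Evans--Krylov give $C^{2,\alpha}$ estimates uniform in $t$, and Schauder bootstrapping yields uniform $C^{k}$ bounds for every $k$. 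I expect this step --- in particular obtaining the lower bound on $K$ in the presence of the possibly singular weight $h^{p}$, $p\le 0$ --- to be the main obstacle, and the place where both the evenness hypothesis and the sharp exponent $-n-1<p$ are used most delicately.

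Finally, the uniform estimates give long-time existence. Monotonicity and boundedness of $\Psi$ give $\int_0^\infty|\tfrac{d}{dt}\Psi(M_t)|\,dt<\infty$, so there is $t_i\to\infty$ with $\tfrac{d}{dt}\Psi(M_{t_i})\to 0$; by the $C^\infty$ bounds a subsequence of $M_{t_i}$ converges smoothly to a limit hypersurface $M_\infty$ with support function $h_\infty$ at which $\tfrac{d}{dt}\Psi=0$, i.e.\ $h_\infty$ solves the stationary elliptic equation. Passing to the limit in the formula defining $\theta(t)$ shows $\theta(t_i)\to\theta_\infty>0$ and that $h_\infty$ is a smooth solution of (\ref{yibanfangcheng1}) with $\tfrac1c=\theta_\infty$, which completes the proof.
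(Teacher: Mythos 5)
Your proposal follows the same overall architecture as the paper's Section~\ref{proofofthm2}: reduce the flow to a scalar Monge--Amp\`ere equation for the support function, establish $C^0$, $C^1$, $C^2$ a priori estimates, invoke Krylov regularity, and extract a convergent subsequence from the monotonicity of $\Psi$. You correctly identify the two structural inputs (conservation of Gaussian volume under \eqref{yibanfangcheng2}, and monotonicity of $\Psi$) and the role of evenness in the $C^0$ upper bound via the estimate $h(x,t)\ge |x\cdot x^t_{\max}|\,h_{\max}(t)$ plus John ellipsoid and Blaschke--Santal\'o.

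However, your $C^0$ lower bound argument has a genuine gap. You claim that for $p\le0$ the factor $h^p$ in $\Psi$ forces $|\Psi(M_t)|\to\infty$ as $\min h\to0$, contradicting a ``two-sided bound on $\Psi$.'' Two problems. First, the two-sided bound on $\Psi$ is not available a priori: monotonicity gives only $\Psi(M_t)\le\Psi(M_0)$, and the paper deduces the bound $|\Psi|\le C$ as a \emph{consequence} of the $C^0$ estimates (see \eqref{400}), so your order of argument is circular. Second, and more seriously, the claim $|\Psi|\to\infty$ as $\min h\to0$ is false over most of the admissible range: if an origin-symmetric body of fixed outer radius is squeezed to thickness $\epsilon$ in one direction, the region of $\mathbb{S}^n$ where $h$ is of order $\epsilon$ has spherical measure $O(\epsilon^n)$, so $\int_{\mathbb{S}^n}h^p\,dx$ stays bounded as $\epsilon\to0$ whenever $p>-n$, and $\int_{\mathbb{S}^n}f\log h\,dx$ stays bounded when $p=0$ since $\epsilon^n\log\epsilon\to0$. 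The correct mechanism, which is what the paper actually uses (the lower bound in Lemma~\ref{C0estimateC}, reused in Lemma~\ref{C0c2}), is the conserved Gaussian volume: $\gamma_{n+1}(\Omega_t)\equiv\gamma_{n+1}(\Omega_0)\ge\tfrac12$ together with the already-proved upper bound on $h$ forces $\Omega_t$ out of any thin slab, since a bounded region inside a thin slab has Gaussian measure strictly below $\tfrac12$; in the symmetric case one has $\Omega_t\subset\{|x\cdot v|\le h(v,t)\}$ for every $v$, and $\gamma_{n+1}(\{|x_1|\le a\})<\tfrac12$ for $a$ small. You should replace the $\Psi$-blowup claim with this volume argument. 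Your $C^2$ estimate is left at the level of a sketch; the paper makes it precise by applying Lemma~A.1 of \cite{2021ChenLi} to \eqref{8} for the upper bound on $K$, and passing to the polar dual flow \eqref{CL2} and applying Lemma~A.2 of \cite{2021ChenLi} for the upper bound on the principal radii, which would be a natural way to make your maximum-principle sketch rigorous.
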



	Next, we use the Gauss curvature flow \eqref{yibanfangcheng2} to study the $L_p$-Gaussian Minkowski problem \eqref{flow01}. We take another value $\theta(t)= (\sqrt{2\pi})^{n+1}$, then the flow  (\ref{yibanfangcheng2}) becomes
	\begin{equation}\label{flow02}
		\left\{
		\begin{array}{l}
			\displaystyle \frac{\partial X}{\partial t}(x,t)=-(\sqrt{2\pi})^{n+1}e^{\frac{r^2}{2}}K(x,t){\langle X,\nu\rangle}^pf(\nu)\nu+X(x,t)\\
			X(x,0)=X_0(x)
		\end{array}\right.
	\end{equation}
	We consider the following functional in our argument.
	\begin{equation}\label{flow03}
		\Phi(M_t)=\frac{1}{p}\int_{\mathbb{S}^n}f(x)h^p(x,t)dx-\frac{1}{(\sqrt{2\pi})^{n+1}}\int_{\mathbb{S}^n}d\xi\int_0^{r(\xi,t)}e^{-\frac{s^2}{2}}s^nds, \ \text{if}\ p\neq0,
	\end{equation}
	where $h(\cdot,t)$ and $r(\cdot,t)$ are  the support function and radial function of $M_t$, respectively. 
	By studying this functional and the a priori estimates of the solutions to the flow (\ref{flow02}), we obtain the following convergence result for the asymptotic flow.
	
	\begin{theorem}\label{mainthm}
		\indent Let $M_0$ be a smooth, closed, uniformly convex hypersurface  in $\mathbb{R}^{n+1}$ enclosing the origin. Let $f$ is a smooth positive function on $\mathbb{S}^n$. If $p> n+1$ or $p=n+1$ with $f<\frac{1}{(\sqrt{2\pi})^{n+1}}$, then flow (\ref{flow02}) has  a unique smooth, uniformly convex solution $M_t$ for all time $t>0$. When $t\rightarrow\infty$, a subsequence of $M_t$ converge smoothly to the unique smooth solution of (\ref{flow01}), which is the minimiser of the functional (\ref{flow03}).
	\end{theorem}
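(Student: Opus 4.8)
\medskip
\noindent\textit{Proof strategy.}
The plan is to pass from the geometric flow \eqref{flow02} to a scalar parabolic Monge--Amp\`ere equation for the support function, to establish uniform a priori estimates, and then to use the monotonicity of the functional \eqref{flow03} to extract a convergent subsequence whose limit solves \eqref{flow01}. Since $M_t$ stays convex and encloses the origin we parametrise it by the inverse Gauss map and write $h(\cdot,t)=h_{M_t}$; using $\langle X,\nu\rangle=h$, $K=1/\det(h_{ij}+h\delta_{ij})$ and $r^2=|\nabla h|^2+h^2$, the flow \eqref{flow02} is equivalent to
\[
\partial_t h=h-(\sqrt{2\pi})^{n+1}e^{\frac{|\nabla h|^2+h^2}{2}}\frac{h^{p}f(x)}{\det(h_{ij}+h\delta_{ij})}\qquad\text{on }\mathbb{S}^n\times[0,T).
\]
The right-hand side is increasing in $h_{ij}+h\delta_{ij}$ and, since $\det^{1/n}$ is concave on positive definite matrices, concave in the Hessian; hence the equation is parabolic and concave as long as $h>0$ and $h_{ij}+h\delta_{ij}>0$, which gives short-time existence and uniqueness by standard theory. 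Writing $G:=(\sqrt{2\pi})^{n+1}e^{r^2/2}Kh^pf$, so $\partial_t h=h-G$, a stationary point of the flow is exactly a solution of \eqref{flow01}.

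\medskip
\noindent\textit{$C^0$ and $C^1$ estimates.}
At a spatial maximum of $h(\cdot,t)$ one has $\nabla h=0$, so $r=h_{\max}$ and $h_{ij}+h\delta_{ij}\le h_{\max}\delta_{ij}$, whence $K\ge h_{\max}^{-n}$ and $\partial_t h_{\max}\le h_{\max}\big(1-(\sqrt{2\pi})^{n+1}(\min f)\,e^{h_{\max}^2/2}\,h_{\max}^{p-n-1}\big)$, which is negative once $h_{\max}$ is large because $p\ge n+1$. Similarly, at a spatial minimum $K\le h_{\min}^{-n}$ gives $\partial_t h_{\min}\ge h_{\min}\big(1-(\sqrt{2\pi})^{n+1}(\max f)\,e^{h_{\min}^2/2}\,h_{\min}^{p-n-1}\big)$, and the bracket is positive for small $h_{\min}$: when $p>n+1$ it tends to $1$, while when $p=n+1$ it tends to $1-(\sqrt{2\pi})^{n+1}\max f>0$, which is precisely where the hypothesis $f<(\sqrt{2\pi})^{-(n+1)}$ is used. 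By the maximum principle (Hamilton's trick for the evolving extrema) we obtain $0<c_1\le h(\cdot,t)\le c_2<\infty$ uniformly in $t$, and then $|\nabla h|^2=r^2-h^2\le c_2^2$ gives the $C^1$ bound and $c_1\le r\le c_2$.

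\medskip
\noindent\textit{$C^2$ estimate and regularity (the main obstacle).}
The crux is to prove uniform ellipticity $c_3 I\le h_{ij}+h\delta_{ij}\le c_4 I$ with constants independent of $t$. First, differentiating the equation in time and applying the maximum principle to $G$ and to $G/h$, using the $C^0$--$C^1$ bounds, yields $0<c\le G\le C$; since $h$, $r$ and $f$ are already controlled this is equivalent to two-sided bounds $0<c\le K\le C$, i.e. $c\le\det(h_{ij}+h\delta_{ij})\le C$. Secondly --- and this is the technically heaviest step --- one bounds the largest principal curvature from above by applying the maximum principle to an auxiliary quantity of the form $\log\lambda_{\max}\big((h_{ij}+h\delta_{ij})^{-1}\big)$ corrected by a function of $|\nabla h|^2$ and $h$, in the now-standard manner for Gauss-curvature flows; this gives $h_{ij}+h\delta_{ij}\ge c_3 I$, and combined with $\det\le C$ forces $h_{ij}+h\delta_{ij}\le c_4 I$, so uniform convexity is preserved. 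With $C^0$, $C^1$ and uniform ellipticity in hand the equation is uniformly parabolic and concave in the Hessian, so Evans--Krylov gives a uniform $C^{2,\alpha}$ bound and Schauder estimates then give uniform $C^{k,\alpha}$ bounds for every $k$; hence the flow exists for all $t>0$.

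\medskip
\noindent\textit{Monotonicity of $\Phi$, convergence and uniqueness.}
Using the variational formula for $\gamma_{n+1}$ recalled above, the identity $\int_{\mathbb{S}^n}d\xi\int_0^{r(\xi,t)}e^{-s^2/2}s^n\,ds=(\sqrt{2\pi})^{n+1}\gamma_{n+1}(\Omega_t)$, so that $\Phi(M_t)=\frac1p\int_{\mathbb{S}^n}fh^p\,dx-\gamma_{n+1}(\Omega_t)$, and $\partial_t h=h-G$, a direct computation gives
\[
\frac{d}{dt}\Phi(M_t)=-\int_{\mathbb{S}^n}\frac{\big((\sqrt{2\pi})^{n+1}f h^{p}-h\,e^{-r^2/2}\det(h_{ij}+h\delta_{ij})\big)^2}{(\sqrt{2\pi})^{n+1}\,h\,e^{-r^2/2}\det(h_{ij}+h\delta_{ij})}\,dx\le0,
\]
with equality precisely when \eqref{flow01} holds. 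Since $\gamma_{n+1}(\Omega_t)\le1$ we have $\Phi(M_t)\ge-1$, so $\Phi(M_t)$ is bounded and $\int_0^\infty(-\tfrac{d}{dt}\Phi)\,dt<\infty$; hence there is $t_i\to\infty$ along which the integrand tends to $0$ in $L^1$, and by the uniform estimates a further subsequence has $h(\cdot,t_i)\to h_\infty$ in $C^\infty(\mathbb{S}^n)$ with $h_\infty$ the support function of a uniformly convex body; passing to the limit forces $(\sqrt{2\pi})^{n+1}f h_\infty^{p}=h_\infty e^{-r_\infty^2/2}\det\big((h_\infty)_{ij}+h_\infty\delta_{ij}\big)$, i.e. $h_\infty$ solves \eqref{flow01} (the choice $\theta=(\sqrt{2\pi})^{n+1}$ is made exactly so that no Lagrange multiplier appears). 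For $p\ge n+1$ the solution of \eqref{flow01} is unique: writing $\mathcal{F}[h]$ for its left side, if $h_1,h_2$ are solutions and $\mu:=\max_{\mathbb{S}^n}(h_1/h_2)=(h_1/h_2)(x_0)\ge1$, then at $x_0$ one has $\nabla h_1=\mu\nabla h_2$, $(h_1)_{ij}+h_1\delta_{ij}\le\mu\big((h_2)_{ij}+h_2\delta_{ij}\big)$ and $r_1=\mu r_2$, so $f(x_0)=\mathcal{F}[h_1](x_0)\le\mu^{\,n+1-p}\mathcal{F}[h_2](x_0)=\mu^{\,n+1-p}f(x_0)$; this gives $\mu=1$ when $p>n+1$, and for $p=n+1$ equality throughout forces $r_1=r_2$ at $x_0$, again $\mu=1$, so $h_1\equiv h_2$ by symmetry. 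Finally $\Phi$ is continuous on convex bodies containing the origin, tends to $+\infty$ on large bodies and to $0$ on degenerate ones, and $\inf\Phi<0$ (for $p=n+1$ via $\int_{\mathbb{S}^n}f<(\sqrt{2\pi})^{-(n+1)}|\mathbb{S}^n|$, again from the hypothesis); hence $\Phi$ attains its infimum at a non-degenerate convex body, which is a critical point of $\Phi$ and therefore the unique solution of \eqref{flow01}. Thus $M_\infty$ is exactly this minimiser, which completes the proof.
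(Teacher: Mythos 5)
Your proposal follows the same overall scheme as the paper: reduce to the scalar parabolic Monge--Amp\`ere equation for $h$, use the evolution of the spatial extrema of $h$ (noting where $p\ge n+1$, and for $p=n+1$ the hypothesis $f<(\sqrt{2\pi})^{-(n+1)}$, enter) to obtain two-sided $C^0$ bounds, deduce $C^1$ from $|\nabla h|^2=r^2-h^2$, establish uniform ellipticity, invoke Krylov/Schauder for long-time existence, then use the monotonicity of $\Phi$ together with the compactness to extract a convergent subsequence solving \eqref{flow01}, and finally prove uniqueness by a maximum-principle argument at the maximum of $h_1/h_2$. Your derivative formula for $\Phi$ agrees with the paper's Lemma \ref{2.1} after identifying $\det(\nabla^2h+hI)=1/K$, and your uniqueness argument is essentially the paper's computation \eqref{5.10}--\eqref{5.11}, just expressed through $\mu=\max(h_1/h_2)$ and $r_1=\mu r_2$ rather than through the exponential factor directly.

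The one genuine divergence is in the $C^2$ step, which you correctly flag as the technical crux. You propose to first bound $G=(\sqrt{2\pi})^{n+1}e^{r^2/2}Kh^pf$ from above and below by applying a maximum principle to $G$ and $G/h$, and then to bound the largest principal curvature by running the maximum principle on $\log\lambda_{\max}((h_{ij}+h\delta_{ij})^{-1})$ with a lower-order correction. The paper instead obtains the lower bound on $\det(\nabla^2h+hI)$ by invoking Lemma A.1 of Chen--Li (a ready-made maximum-principle lemma for this class of flows), and obtains the upper bound $(\nabla^2h+hI)\le CI$ not by a direct test quantity on $M_t$ but by passing to the polar dual $M_t^*$, observing that $h^*=1/r$ satisfies an \emph{expanding} Gauss-curvature flow \eqref{CL2}, and citing Chen--Li's Lemma A.2 for the dual flow. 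Both routes are legitimate; yours is more self-contained in spirit but you have left the heaviest computations as ``in the now-standard manner''. In particular the claim that a maximum principle applied to $G$ alone yields the two-sided bound would need a careful choice of auxiliary function and verification that the bad terms have the right sign given the explicit $e^{r^2/2}h^p$ structure --- this is exactly what Chen--Li's Lemma A.1 packages, and you should either cite such a lemma or carry out the computation. The same caveat applies to your $\log\lambda_{\max}$ argument; the polar-dual trick used in the paper is worth knowing because it turns the upper bound on $b_{ij}$ into a lower bound for an expanding flow, where the standard machinery applies cleanly. Your closing remark that the limit is the minimiser of $\Phi$ (via coercivity, $\Phi\to 0$ on degenerate bodies, $\inf\Phi<0$) is extra reasoning not spelled out in the paper but correct in outline; the paper simply combines monotonicity with uniqueness.
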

		\begin{cor}\label{Conver}
		\indent Let $M_0$ be a smooth, closed, uniformly convex hypersurface  in $\mathbb{R}^{n+1}$, enclosing the origin. If $f=\frac{1}{2(\sqrt{2\pi})^{n+1}}$ and $p\geq n+1$, then the hypersurface $M_t$ converge exponentially to a sphere centered at the origin in the $C^\infty$ topology.
	\end{cor}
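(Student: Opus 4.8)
The plan is to identify the limiting hypersurface as a specific origin-centred sphere and then invoke linearised stability of the support-function flow at that sphere. Since $f\equiv\frac{1}{2(\sqrt{2\pi})^{n+1}}$ is a positive constant with $f<\frac{1}{(\sqrt{2\pi})^{n+1}}$ and $p\ge n+1$, Theorem \ref{mainthm} applies: the flow \eqref{flow02} exists for all $t>0$, has uniform $C^\infty$ a priori bounds, and a subsequence of $M_t$ converges smoothly to the unique smooth solution of \eqref{flow01}. For constant $f$, equation \eqref{flow01} reads $e^{-\frac{|\nabla h|^2+h^2}{2}}h^{1-p}\det(h_{ij}+h\delta_{ij})=\tfrac12$, and an origin-centred sphere of radius $R$ (i.e.\ $h\equiv R$) solves it precisely when $g(R):=e^{-R^2/2}R^{\,n+1-p}=\tfrac12$. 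Since $(\log g)'(\rho)=-\rho+\frac{n+1-p}{\rho}<0$ for all $\rho>0$ when $p\ge n+1$, $g$ is strictly decreasing from $+\infty$ to $0$, so there is exactly one such $R>0$; by the uniqueness in Theorem \ref{mainthm} the limit of the subsequence is this sphere $S_R$. In particular, at some finite time the flow enters an arbitrarily small $C^\infty$-neighbourhood of $S_R$.

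It remains to show $S_R$ is an exponentially attracting equilibrium. Parametrising $M_t$ by the inverse Gauss map, the support function evolves by
\[
\partial_t h=h-\tfrac12\,e^{\frac{|\nabla h|^2+h^2}{2}}\,\frac{h^{p}}{\det(\nabla^2 h+hI)},
\]
with stationary solution $h\equiv R$. Writing $h=R+\phi$ and using $\tfrac12 e^{R^2/2}R^{\,p-n}=R$ (which is $g(R)=\tfrac12$), together with the expansions $e^{(|\nabla h|^2+h^2)/2}=e^{R^2/2}(1+R\phi+\cdots)$, $h^{p}=R^{p}(1+\tfrac pR\phi+\cdots)$ and $\det(\nabla^2 h+hI)=R^{n}(1+\tfrac1R(\Delta\phi+n\phi)+\cdots)$, one obtains
\[
\partial_t\phi=\mathcal L\phi+Q[\phi],\qquad \mathcal L\phi:=\Delta_{\mathbb S^n}\phi+(n+1-p-R^2)\,\phi,
\]
where $Q[\phi]$ is a smooth remainder vanishing to second order in $(\phi,\nabla\phi,\nabla^2\phi)$.

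The crucial point is that the linearisation is strictly stable. Differentiating gives $g'(R)=g(R)\,\frac{n+1-p-R^2}{R}$, and since $g$ is strictly decreasing $g'(R)<0$; hence $\delta_0:=p+R^2-n-1>0$. Integrating by parts on $\mathbb S^n$, $\int_{\mathbb S^n}\phi\,\mathcal L\phi=-\|\nabla\phi\|_{L^2}^2-\delta_0\|\phi\|_{L^2}^2\le-\delta_0\|\phi\|_{L^2}^2$, so $\mathcal L$ has spectral bound $-\delta_0<0$. By the principle of linearised stability for this uniformly parabolic equation of Monge-Amp\`ere type, combined with the uniform $C^\infty$ bounds from Theorem \ref{mainthm} and the fact that the flow eventually enters a small neighbourhood of $S_R$, we get $\|h(\cdot,t)-R\|_{C^k(\mathbb S^n)}\le C_k e^{-\delta t}$ for every $k$ and every $\delta<\delta_0$, i.e.\ $M_t$ converges exponentially to $S_R$ in the $C^\infty$ topology. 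The only genuinely non-routine ingredient is the sign $n+1-p-R^2<0$ — which is precisely where the hypothesis $p\ge n+1$ enters; identifying the limit and running the linearised-stability / parabolic bootstrap argument are otherwise standard.
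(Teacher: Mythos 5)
Your argument is correct in outline but takes a genuinely different route from the paper's. The paper first proves a separate gradient-decay estimate (Lemma \ref{ImproGra}): setting $w=\log r$ and applying the maximum principle to the auxiliary quantity $Q=\tfrac12|\nabla w|^2$ along the radial-function form \eqref{12-1} of the flow, it derives $\partial_t Q\le -C_0 Q$ at a spatial maximum, where the hypothesis $p\ge n+1$ enters only through the sign of the coefficient $p-n-1+e^{2w}>0$; this gives $\max_{\mathbb S^n}|\nabla r|/r\le Ce^{-C_0 t}$ directly, and exponential $C^\infty$ convergence of $r$ to a constant then follows from interpolation and the uniform a priori estimates of Theorem \ref{jiajielun}, without identifying the limiting radius and without any linearisation. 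Your approach instead first uses Theorem \ref{mainthm} to identify the subsequential limit as the unique centred sphere $S_R$ with $e^{-R^2/2}R^{n+1-p}=\tfrac12$, then linearises the support-function flow about $h\equiv R$ to obtain $\mathcal L=\Delta_{\mathbb S^n}+(n+1-p-R^2)$ with spectral gap $\delta_0=p+R^2-n-1>0$, and finally invokes linearised stability for the fully nonlinear Monge--Amp\`ere-type flow together with the subsequential smooth convergence to conclude. Both routes are valid: yours pinpoints the sharp linear decay rate and makes the role of the constant $f$ and of $p\ge n+1$ transparent, but it rests on a linearised-stability principle for a fully nonlinear parabolic equation, which is standard yet not free; the paper's gradient-decay argument is global and self-contained, avoids linearisation entirely, and establishes the exponential decay before (indeed, as a means of) locating the equilibrium.
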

	\indent When $0<p<n+1$, in order that the flow (\ref{flow02}) converges to a solution of (\ref{flow01}), we assume that $f\in C^{\infty}(\mathbb{S}^n;\mathbb{R}_{+})$ and satisfies 
	\begin{equation}\label{jia1.12}
		\gamma_{n+1}(\Omega_0)>\frac{1}{p} \int_{\mathbb{S}^n}f(x)h^p(x,0)dx,
	\end{equation}
	where $\Omega_0$ is the convex body enclosed by $M_0$.
	
	\begin{theorem}\label{mainthm2}
		\indent Let $M_0$ be as in Theorem \ref{mainthm} and $M_0$ is origin-symmetric. Let $0<p<n+1$. If $f$ is even function and 
		\eqref{jia1.12} hold, then flow (\ref{flow02}) has  a unique smooth, uniformly convex solution $M_t$ for all time $t>0$. When $t\rightarrow\infty$, a subsequence of $M_t$ converge smoothly to the smooth even solution of (\ref{flow01}), which is the minimiser of the functional (\ref{flow03}).
	\end{theorem}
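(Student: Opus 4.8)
The plan is to turn the geometric flow \eqref{flow02} into a scalar parabolic Monge--Amp\`ere equation for the support function $h(\cdot,t)$ of $M_t$, show that the functional $\Phi$ of \eqref{flow03} decreases along the flow, derive uniform a priori estimates, and pass to a limit. Using $\langle X,\nu\rangle=h$, $r^2=|\nabla h|^2+h^2$ and $K=1/\det(\nabla^2 h+hI)$, the flow \eqref{flow02} is equivalent to
\[
\partial_t h=-\,(\sqrt{2\pi})^{\,n+1}\,\frac{e^{(|\nabla h|^2+h^2)/2}\,h^{p}f(x)}{\det(\nabla^2 h+hI)}+h,\qquad h(\cdot,0)=h_{M_0},
\]
which is fully nonlinear parabolic as long as $\nabla^2 h+hI>0$; short-time existence, uniqueness and smoothness are standard. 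Since $f$ is even, the right-hand side is invariant under $x\mapsto-x$ whenever $h$ is, so by uniqueness $M_t$ stays origin-symmetric for all $t$ --- this is the only use of the evenness of $f$, and it is exactly what makes the lower $C^0$ bound below work.

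Next I would show that $\Phi$ is a Lyapunov functional. Noting that the second term of \eqref{flow03} equals $\gamma_{n+1}(\Omega_t)$ and that the first variation of $\gamma_{n+1}$ is the Gaussian surface area measure, i.e. $\frac{d}{dt}\gamma_{n+1}(\Omega_t)=\frac{1}{(\sqrt{2\pi})^{n+1}}\int_{\mathbb{S}^n}\partial_t h\,e^{-(|\nabla h|^2+h^2)/2}\det(\nabla^2 h+hI)\,dx$, a direct computation with the equation above gives
\[
\frac{d}{dt}\Phi(M_t)=-\,\frac{1}{(\sqrt{2\pi})^{\,n+1}}\int_{\mathbb{S}^n}\frac{\det(\nabla^2 h+hI)}{h}\,e^{-(|\nabla h|^2+h^2)/2}\,(\partial_t h)^2\,dx\ \le\ 0,
\]
with equality exactly when $\partial_t h\equiv0$, i.e. when $h$ solves \eqref{flow01}. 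Hence $\Phi(M_t)\le\Phi(M_0)<0$ by \eqref{jia1.12}, while $\Phi(M_t)\ge-\gamma_{n+1}(\Omega_t)\ge-1$, so $\Phi(M_t)$ decreases to a finite limit. Two consequences will be used: $\frac1p\int_{\mathbb{S}^n} f\,h^p(\cdot,t)\,dx\le\Phi(M_0)+\gamma_{n+1}(\Omega_t)\le C_1$ (a uniform $L^p$ bound on $h$), and $\gamma_{n+1}(\Omega_t)=\frac1p\int_{\mathbb{S}^n} f\,h^p(\cdot,t)\,dx-\Phi(M_t)\ge-\Phi(M_0)=:\delta>0$.

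Then come the a priori estimates. For the $C^0$ bound: by the $L^p$ bound and $f\ge f_{\min}>0$, together with convexity and origin-symmetry (the segment $[-R\xi_0,R\xi_0]\subset\Omega_t$, $R=\max r$ attained at $\xi_0$, so $h(x)\ge R|\langle x,\xi_0\rangle|$), one gets $c_{n,p}R^{p}\le\int_{\mathbb{S}^n}h^{p}(\cdot,t)\,dx\le C_1 p/f_{\min}$, hence $\max h\le\max r\le C_2$ and $\Omega_t\subset B_{C_2}(0)$; and if $\min h=w$, origin-symmetry places $\Omega_t$ inside a slab $\{\,|\langle y,x_0\rangle|\le w\,\}$, so $\delta\le\gamma_{n+1}(\Omega_t)\le\frac1{\sqrt{2\pi}}\int_{-w}^{w}e^{-s^2/2}\,ds\le\frac{2w}{\sqrt{2\pi}}$, giving $\min h\ge\frac{\sqrt{2\pi}}{2}\delta=:c_0>0$. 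With $c_0\le h\le C_2$ one has $B_{c_0}(0)\subset\Omega_t\subset B_{C_2}(0)$, so $|\nabla h|^2=r^2-h^2\le C_2^2$. The two-sided curvature estimate $cI\le\nabla^2 h+hI\le CI$ --- equivalently a positive two-sided bound on $\det(\nabla^2 h+hI)$ together with a bound on the largest principal radius --- is the main technical obstacle; I would obtain it by the maximum principle applied to Tso-type auxiliary functions built from the flow speed and $h-c_0/2$, feeding in the $C^0$ and $C^1$ bounds. Once the equation is uniformly parabolic and, after a logarithm, concave in $\nabla^2 h$, the Evans--Krylov and Schauder theories yield uniform $C^{k,\alpha}$ estimates for all $k$.

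Finally, the uniform estimates give long-time existence by continuation and relative compactness of $\{M_t\}$ in $C^\infty$. Integrating the Lyapunov identity shows $\int_0^\infty\!\!\int_{\mathbb{S}^n}\frac{\det(\nabla^2 h+hI)}{h}e^{-(|\nabla h|^2+h^2)/2}(\partial_t h)^2\,dx\,dt<\infty$, and since the weight is bounded below by a positive constant, there are times $t_j\to\infty$ with $\|\partial_t h(\cdot,t_j)\|_{L^2}\to0$; by the higher-order bounds $\partial_t h(\cdot,t_j)\to0$ in $C^\infty$. Passing to a further subsequence, $M_{t_j}\to M_\infty$ smoothly, with $M_\infty$ origin-symmetric and $\partial_t h\equiv0$, i.e. $M_\infty$ is a smooth even solution of \eqref{flow01}. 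The same a priori estimates (which only used \eqref{jia1.12}-type lower bounds on $\gamma_{n+1}$, evenness and the $L^p$ bound) control a minimizing sequence for $\Phi$ among even convex bodies, so the direct method produces a minimizer of $\Phi$, necessarily a critical point and hence again a solution of \eqref{flow01}; this variational characterization of the solution is the last assertion of the theorem, and matching it with the flow limit is the remaining bookkeeping step.
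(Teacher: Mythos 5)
Your outline tracks the paper's proof closely: reduce \eqref{flow02} to the scalar Monge--Amp\`ere flow \eqref{8-1} for the support function, use \eqref{flow03} as a Lyapunov functional (your formula $\frac{d}{dt}\Phi=-\frac{1}{(\sqrt{2\pi})^{n+1}}\int_{\mathbb{S}^n}\frac{\det(\nabla^2h+hI)}{h}e^{-r^2/2}(\partial_th)^2\,dx$ is algebraically equivalent to the integrand written in Lemma \ref{2.1}), derive $C^0$, $C^1$, $C^2$ estimates, obtain long-time existence, and extract a convergent subsequence. The one place you genuinely depart from the paper is the $C^0$ lower bound. The paper argues by contradiction through Blaschke selection: if $r_{\min}(t_i)\to0$, origin-symmetry forces the bodies to collapse to a lower-dimensional set, so $\gamma_{n+1}(\Omega_{t_i})\to0$, contradicting the monotonicity of $\Phi$ together with \eqref{jia1.12}. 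You instead use an explicit slab estimate: origin-symmetry gives $\Omega_t\subset\{y:\,|y\cdot x_0|\le w\}$ with $w=\min h$, hence $\gamma_{n+1}(\Omega_t)\le\frac{1}{\sqrt{2\pi}}\int_{-w}^{w}e^{-s^2/2}\,ds\le\frac{2w}{\sqrt{2\pi}}$, while the Lyapunov inequality and \eqref{jia1.12} give $\gamma_{n+1}(\Omega_t)\ge-\Phi(M_0)>0$. This yields the quantitative bound $\min h\ge\frac{\sqrt{2\pi}}{2}\bigl(-\Phi(M_0)\bigr)$ directly, without compactness, and is cleaner than the paper's argument.

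The genuine gap is the $C^2$ estimate, which you acknowledge as ``the main technical obstacle'' and then dispose of in one sentence. The paper does not get the two-sided bound $cI\le\nabla^2h+hI\le CI$ from a single Tso-type function. The lower bound on $\det(\nabla^2h+hI)$ (equivalently the upper bound on the Gauss curvature) is indeed obtained by a Tso-type argument, invoking Lemma A.1 of Chen--Li \cite{2021ChenLi} applied to the speed function normalised by $h-c_0/2$. But the upper bound $\nabla^2h+hI\le CI$ is obtained by a separate duality device: the paper passes to the polar dual body $\Omega_t^*$, whose support function $h^*=1/r$ satisfies an \emph{expanding} Gauss curvature flow \eqref{CL2}, and applies Lemma A.2 of \cite{2021ChenLi} to \emph{that} equation to produce a lower bound on $\nabla^2h^*+h^*I$, which translates back to the desired upper bound on the principal radii of $M_t$. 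A Tso-type function controlling $K$ bounds only the product of the eigenvalues of $\nabla^2h+hI$, not the largest one; your proposal would need either this duality trick or a separate maximum-principle argument applied to the largest principal radius. The remainder of the argument (integrating the Lyapunov identity, extracting $t_j$ with $\partial_th(\cdot,t_j)\to0$, passing to the limit, and identifying it as an even solution of \eqref{flow01}) matches the paper, as does the lightly-treated claim, in both your sketch and the paper, that the limit is a minimiser of $\Phi$.
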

	
	By Theorem \ref{mainthm} and Theorem \ref{mainthm2}, we have the following existence results for equation (\ref{flow01}).
	\begin{theorem}\label{mainthm3}
		Let $f$ be a smooth and positive function on the sphere $\mathbb{S}^n$.
		\begin{itemize}
			\item[(i)] If $p> n+1$, there is a unique smooth, uniformly convex solution to (\ref{flow01}).
			\item[(ii)] If $p= n+1$ and $f<\frac{1}{(\sqrt{2\pi})^{n+1}}$, there is a unique smooth, uniformly convex solution to (\ref{flow01}).
			\item[(iii)] If $0<p<n+1$, $f$ is even function and satisfies \eqref{jia1.12}, there is an origin-symmetric solution to \eqref{flow01}.
		\end{itemize}
	\end{theorem}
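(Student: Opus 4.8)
The plan is to obtain Theorem~\ref{mainthm3} as an essentially immediate consequence of the convergence statements in Theorems~\ref{mainthm} and \ref{mainthm2}. The one point that must be isolated first is that the flow \eqref{flow02} uses the \emph{constant} normalising factor $\theta(t)=(\sqrt{2\pi})^{n+1}$, so that — unlike the flow \eqref{yibanfangcheng2} with its genuine Lagrange multiplier — its stationary equation carries no free constant and is exactly \eqref{flow01}. Indeed, if $M=X(\mathbb{S}^n)$ with support function $h$ is a fixed point of \eqref{flow02}, then $X=(\sqrt{2\pi})^{n+1}e^{r^2/2}K\langle X,\nu\rangle^{p}f(\nu)\,\nu$; pairing with $\nu$ and inserting $\langle X,\nu\rangle=h$, $r^{2}=|\nabla h|^{2}+h^{2}$ and $K=\bigl(\det(h_{ij}+h\delta_{ij})\bigr)^{-1}$ gives precisely \eqref{flow01}. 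Hence every smooth limit of \eqref{flow02} solves \eqref{flow01}.

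For parts (i) and (ii) I would take $M_{0}$ to be a round sphere centred at the origin (any smooth, closed, uniformly convex $M_0$ enclosing $O$ will do): no restriction on $f$ is needed when $p>n+1$, and only the hypothesis $f<(\sqrt{2\pi})^{-(n+1)}$ is needed when $p=n+1$. These are exactly the assumptions of Theorem~\ref{mainthm}, which therefore produces a global smooth, uniformly convex solution $M_{t}$ of \eqref{flow02} together with a subsequence $M_{t_{i}}$ converging smoothly; by the previous paragraph the limit is a smooth, uniformly convex solution of \eqref{flow01}, and it is the minimiser of the functional \eqref{flow03}. Uniqueness in (i) and (ii) I would take directly from Theorem~\ref{mainthm}; in the smooth category it is the standard consequence of the strong maximum principle comparing two solutions of the Monge-Amp\`ere equation \eqref{flow01} in the range $p\ge n+1$.

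For part (iii) the only additional input is the choice of admissible initial data: one needs an origin-symmetric, smooth, uniformly convex hypersurface $M_{0}$ for which \eqref{jia1.12} holds, i.e. $\gamma_{n+1}(\Omega_{0})>\tfrac1p\int_{\mathbb{S}^{n}}f\,h_{M_0}^{p}\,dx$. Given such an $M_0$, Theorem~\ref{mainthm2} applies verbatim: since $f$ is even the flow \eqref{flow02} preserves $O$-symmetry, it exists for all time, and a subsequence of $M_t$ converges smoothly to an even solution of \eqref{flow01} realising the minimum of \eqref{flow03}. I would add a short remark that \eqref{jia1.12} is a genuine restriction and that (iii) is to be read as conditional on the existence of such $M_{0}$: for $M_0=B_{\rho}$ one has $\gamma_{n+1}(B_\rho)\le 1$ while $\tfrac1p\int_{\mathbb S^n} f\,h_{M_0}^{p}\,dx=\tfrac{1}{p}\|f\|_{L^{1}}\rho^{p}$, which for $0<p<n+1$ can be dominated by $\gamma_{n+1}(B_\rho)$ only in a bounded range of $\rho$, and only if $\|f\|_{L^1}$ is not too large.

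The main obstacle is not really located in Theorem~\ref{mainthm3} itself: all the analytic substance — long-time existence of \eqref{flow02}, the $C^{0}$, $C^{1}$ and $C^{2}$ a priori estimates, the uniform lower bound on the principal curvatures, and the extraction via Krylov-Safonov and Schauder theory of a subsequence converging in $C^{\infty}$ — is carried out inside the proofs of Theorems~\ref{mainthm} and \ref{mainthm2}. What remains here is light: the one-line identification of the stationary equation of \eqref{flow02} with \eqref{flow01}, the bookkeeping of which conditions on $p$ and $f$ feed into which theorem, and, for (iii), making explicit that the initial hypersurface can be chosen to satisfy \eqref{jia1.12}. If anything requires care, it is only the uniqueness clauses in (i) and (ii), which are best imported from Theorem~\ref{mainthm} rather than reproved here.
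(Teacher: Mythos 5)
Your proposal matches the paper's own treatment: Theorem \ref{mainthm3} is stated there as an immediate corollary of Theorems \ref{mainthm} and \ref{mainthm2} with no separate proof given, and your bookkeeping of which hypotheses feed into which theorem, together with the identification of the stationary equation of \eqref{flow02} with \eqref{flow01}, is exactly the (implicit) reasoning. Your added remark that \eqref{jia1.12} is a joint condition on $f$ and the initial body (so that part (iii) is conditional on the existence of an admissible $M_0$) is a correct and useful clarification that the paper leaves tacit.
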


	This paper is organised as follows.
	In Section 2, we collect some properties of convex body and convex
	hypersurfaces, and show that the flow (\ref{flow02}) can be reduced to a scalar parabolic equation
	of Monge-Amp\`{e}re type, via the support function or the radial function. We will also
	show in Section 2 that the functionals \eqref{c} and \eqref{flow03} are non-increasing along the flows \eqref{yibanfangcheng2} and  \eqref{flow02}, respectively.
	In Section 3, Aleksandrov's variational method is applied to obtain the origin-symmetry solution with Lagrange multipliers for case $p=0$.
	In Section 4, we give the proof of Theorem \ref{yibanfangchengthm} and \ref{yibanfangchengthm2}.
	The proofs of Corollary \ref{Conver}, Theorems \ref{mainthm} and \ref{mainthm2}
	will be presented in Section 5.

	\section{Preliminaries}\label{sec2}
	In this section, we continue to give a brief review of some relevant notions about convex bodies and recall some basic properties of convex hypersurfaces.
	Let 
	$C^+(\S^{n})$ and $C^+_e(\mathbb{S}^n)$ be the sets of the positive and  positive even functions defined on $\mathbb{S}^n$, repectively.
	Let $h\in C^+(\mathbb{S}^n)$, the Wulff shape $[h]$ generated by $h$ is a convex body defined by
	\[
	[h]=\{x\in\mathbb{R}^{n+1}:x\cdot v\leq h(v),\forall v\in \mathbb{S}^n\}.
	\]
	Obviously, if $K\in \mathcal{K}_o$, $[h_K]=K$.
	
	For a compact convex subset $K\in \K$  and $v\in\mathbb{S}^n$, the supporting hyperplane $H(K,v)$ of $K$ at $v$ is given by
	\[
	H(K,v)=\{x\in K: x\cdot v=h_K(v)\}.
	\]
	The boundary point of $K$ which only has one supporting hyperplane called regularity
	point, otherwise, it is a singular point. The set of singular points is denoted as $\sigma_K$, it
	is well known that $\sigma_K$ has spherical Lebesgue measure $0$.
	
	For $x\in \partial K \backslash \sigma_K$, its Gauss map $\nu_K$ is represented by
	\[\nu_K(x)=\{v\in \mathbb{S}^n: x\cdot v=h_K(v)\}.\]
	Corresponding, for a Borel set $\eta\subset\mathbb{S}^n$, its reverse Gauss map is denoted by $\nu_K^{-1}$,
	\[
	\nu_K^{-1}(\eta)=\{x\in \partial K: \nu_K(x)\in\eta\}.
	\]
	For the Borel set $\eta\subset\mathbb{S}^n$, its surface area measure is defined as
	\[
	S_K(\eta)=\mathcal{H}^n(\nu_K^{-1}(\eta)),
	\]
	where $\mathcal{H}^n$ is $n$-dimensional Hausdorff measure.
	
	For a Borel set $\omega\subset\mathbb{S}^n$, $\boldsymbol{\alpha}_K(\omega)$ denotes its radial Gauss image and is defined as
	\[
	\boldsymbol{\alpha}_K(\omega)=\{v\in \mathbb{S}^n: r_K(u)(u\cdot v)=h_K(v)\quad \text{for some}\ u\in\omega\}, 
	\]
	when the Borel set $\omega$ has only one element $u$, we will abbreviate $\boldsymbol{\alpha}_K(\{u\})$ as $\boldsymbol{\alpha}_K(u)$. The subset of $\mathbb{S}^n$ which make $\boldsymbol{\alpha}_K(u)$ contain more than one element denoted by $\omega_K$ for each $u\in\omega$. The set $\omega_K$ has spherical Lebesgue measure $0$.
	
	The radial Guass map of $K$ is a map denoted by $\alpha_K(u)$, the only difference between $\alpha_K(u)$ and $\boldsymbol{\alpha}_K(u)$ is that the former is defined on $\mathbb{S}^n\backslash \omega_K$ not on $\mathbb{S}^n$ which lead
	to $\boldsymbol{\alpha}_K(u)$ may have many elements but $\alpha_K(u)$ has only one. In other words, if $\boldsymbol{\alpha}_K(u)=\{v\}$, then $\boldsymbol{\alpha}_K(u)=\alpha_K(u)$.
	
	Let $M$ be a smooth, closed, uniformly convex hypersurface in $\mathbb{R}^{n+1}$.
	Assume that $M$ is parametrized by the inverse Gauss map $X: \mathbb{S}^n \rightarrow M$. The support function $h:\mathbb{S}^n \rightarrow \mathbb{R}$ of $M$ is defined by
	\begin{equation}\label{1}
		h(x)=\sup\left\{{\langle x,y\rangle}:y\in M\right\}.
	\end{equation}
	\indent The supremum is attained at a point $y$ such that $x$ is the outer normal of $M$ at $y$. it is easy to check that
	\begin{equation}\label{2}
		y=h(x)x+\nabla h(x),
	\end{equation}
	where $\nabla$ is the covariant derivative with respect to the standard matric $e_{ij}$ of the sphere $\mathbb{S}^n$. Here
	\begin{equation}\label{3}
		r=|y|=\sqrt{h^2+|\nabla h|^2}.
	\end{equation}
	\indent The second fundamental form of $M$ is given that
	\begin{equation}\label{4}
		b_{ij}=h_{ij}+he_{ij},
	\end{equation}
	where $h_{ij}=\nabla_{ij}^2h$ denotes the second order covariant derivative of $h$ with respect to the spherical metric $e_{ij}$. By Weingarten's formula,
	\begin{equation}\label{5}
		e_{ij}={\langle \frac{\partial \nu}{\partial x^i}, \frac{\partial \nu}{\partial x^j}\rangle}=b_{ik}g^{kl}b_{jl},
	\end{equation}
	where $g_{ij}$ is the metric of $M$ and $g^{ij}$ its inverse. It follows from (\ref{4}) and (\ref{5}) that the principal radii of curvature of $M$, under a smooth local orthonormal frame on $\mathbb{S}^n$, are the eigenvalues of the matrix
	\begin{equation}\label{6}
		b_{ij}=h_{ij}+h\delta_{ij}.
	\end{equation}
	The Gauss curvature is given by
	\begin{equation}\label{7}
		K=\frac{1}{\det(h_{ij}+h\delta_{ij})}=S_n^{-1}(h_{ij}+h\delta_{ij}),
	\end{equation}
	where
	\[
	S_k=\sum_{1\le i_1<\cdot\cdot\cdot<i_k\le n}\lambda_{i_1}\cdot\cdot\cdot\lambda_{i_k},
	\]
	denotes the $k$-th elementary symmetric polynomial of $\lambda=(\lambda_1, ..., \lambda_n)$.
	
	Let $X(\cdot,t)$ be a smooth solution to the flow (\ref{yibanfangcheng2}) and $h(\cdot,t)$ its support function, then the flow \eqref{yibanfangcheng2} can be reduced to the initial  value problem of the support function $h$:
	\begin{equation}
		\left\{
		\begin{array}{l}
			\displaystyle \frac{\partial h}{\partial t}(x,t)=-\theta(t) e^{\frac{r^2}{2}}K(x,t)h^pf+h(x,t),\\
			h(x,0)=h_0(x),
		\end{array}\right.\label{8}
	\end{equation}
	where $r=\sqrt{h^2+|\nabla h|^2}(x,t)$, $h_0$ is the support function of the initial hypersurface $M_0$.
	
	Since $M$ encloses the origin, it can be parametrized via the radial function $r:\mathbb{S}^n\rightarrow\mathbb{R}_{+}$,
	\[
	M=\left\{r(\xi)\xi:\xi\in\mathbb{S}^n \right\}.
	\]
	Then the following formulea are well-known:
	\begin{equation}\label{9}
		\nu=\frac{r\xi-\nabla r}{\sqrt{r^2+|\nabla r|^2}},
	\end{equation}
	and 
	\begin{equation}\label{10}
		\begin{array}{l}
			\displaystyle g_{ij}=r^2e_{ij}+r_ir_j,\\
			\displaystyle b_{ij}=\frac{r^2e_{ij}+2r_ir_j-rr_{ij}}{\sqrt{r^2+|\nabla r|^2}}.
		\end{array}
	\end{equation}
	Set
	\begin{equation}\label{11}
		v=\frac{r}{h}=\sqrt{1+|\nabla \log r|^2},
	\end{equation}
	where the last equality follows by multiplying $\xi$ to both sides of (\ref{9}).
	Since
	\begin{equation}\label{d}
		\frac{1}{r(\xi,t)}\frac{\partial r(\xi,t)}{\partial t}=\frac{1}{h(x,t)}\frac{\partial h(x,t)}{\partial t},
	\end{equation}
	(see \cite{2020LSW} for the proof),  the flow (\ref{flow02}) can be also described by the following scalar equation of $r(\cdot,t)$,
	\begin{equation}\left\{
		\begin{array}{l}
			\displaystyle \frac{\partial r}{\partial t}(\xi,t)=\frac{-\theta(t) e^{\frac{r^2}{2}}K(\xi,t)r^pf}{(1+|\nabla \log r|^2)^{\frac{p-1}{2}}}+r(\xi,t)\\
			r(\cdot,0)=r_0.
		\end{array}\right.\label{12}
	\end{equation}
	where $r_0$ is the radial function of $M_0$, $K(\xi,t)$ denotes the Gauss curvature at $r(\xi,t)\xi\in M_t$ and $f$ takes its value at $\nu=\nu(\xi,t)$ which is given by (\ref{9}). By (\ref{10}) we have, under a local orthonormal frame on $\mathbb{S}^n$,
	\begin{equation}\label{13}
		K=\frac{\det b_{ij}}{\det g_{ij}}=v^{-n-2}r^{-3n}\det(r^2\delta_{ij}+2r_ir_j-rr_{ij}).
	\end{equation}
	
	When we take another value $\theta(t)=(\sqrt{2\pi})^{n+1}$, then equation \eqref{yibanfangcheng2} becomes \eqref{flow02},  the corresponding equations \eqref{8} and \eqref{12} turn out to be 
	\begin{equation}\label{8-1}
		\left\{
		\begin{array}{l}
			\displaystyle \frac{\partial h}{\partial t}(x,t)=-(\sqrt{2\pi})^{n+1}e^{\frac{r^2}{2}}K(x,t)h^pf+h(x,t),\\
			h(x,0)=h_0(x),
		\end{array}\right.
	\end{equation}
	and 
	\begin{equation}\left\{
		\begin{array}{l}
			\displaystyle \frac{\partial r}{\partial t}(\xi,t)=\frac{-(\sqrt{2\pi})^{n+1}e^{\frac{r^2}{2}}K(\xi,t)r^pf}{(1+|\nabla \log r|^2)^{\frac{p-1}{2}}}+r(\xi,t)\\
			r(\cdot,0)=r_0.
		\end{array}\right.\label{12-1}
	\end{equation}
	respectively.  It is clear that both of the equations \eqref{8} and \eqref{8-1} are parabolic Monge-Amp\'ere type, their solutions exist for a short time. Therefore the flows \eqref{yibanfangcheng2} and \eqref{flow02} have short time solutions.

	Given any $\omega\subset\mathbb{S}^n$, let $\mathcal{C}=\mathcal{C}_{M,\omega}$ be the "cone-like" region with the vertex at the origin and the base $\nu^{-1}(\omega)\subset M$, namely
	\[\mathcal{C}:=\{z\in\mathbb{R}^{n+1}:\quad z=\lambda\nu^{-1}(x), \lambda\in[0,1], x\in\omega\}.\]

	It is well-known that the volume element of $\mathcal{C}$ can be expressed by
	\begin{equation}\label{140}
		d\text{Vol}(\mathcal{C})=\frac{1}{n+1}\frac{h(x)}{K(p)}dx=\frac{1}{n+1}r^{n+1}(\xi)d\xi,
	\end{equation}
	where $p=\nu^{-1}(x)\in M$ and $\xi, x$ are associated by
	\begin{equation}\label{150}
		r(\xi)\xi=h(x)x+\nabla h(x),
	\end{equation}
	namely $p=\nu^{-1}(x)=r(\xi)\xi$. By the second equality in (\ref{140}), we find that the determinant of the Jacobian of the mapping $x\mapsto\xi$ is given by
	\begin{equation}\label{Jac}
	\Big|\frac{d\xi}{dx}\Big|=\frac{h(x)}{r^{n+1}(\xi)K(p)}
	\end{equation}


	We can show the Gaussian volume unchanged along the flow (\ref{yibanfangcheng2}). In fact, we have the following lemma.
	\begin{lemma}\label{gamma-invariant}
		Let $X(\cdot, t)$ be a smooth solution to the flow (\ref{yibanfangcheng2}) with $t\in[0,T)$, and for
		each $t > 0, M_t = X(\mathbb{S}^n, t)$ be a smooth, closed and uniformly convex hypersurface. Suppose that the origin lies in the interior of the convex body $\Omega_t$ enclosed by $M_t$ for all $t\in[0,T)$. Then
		
		\[\gamma_{n+1}(\Omega_t)=\gamma_{n+1}(\Omega_0),\ \forall t \in [0,T).\]
	\end{lemma}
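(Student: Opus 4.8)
The plan is to differentiate $\gamma_{n+1}(\Omega_t)$ in $t$ and show that the derivative vanishes identically; since $t\mapsto\gamma_{n+1}(\Omega_t)$ is continuous, this yields $\gamma_{n+1}(\Omega_t)=\gamma_{n+1}(\Omega_0)$. Write $G(x)=\frac{1}{(\sqrt{2\pi})^{n+1}}e^{-|x|^2/2}$, so that $\gamma_{n+1}(\Omega_t)=\int_{\Omega_t}G\,dx$. Because each $M_t$ is smooth and uniformly convex, bounds $\Omega_t$, and the origin stays in the interior, the transport theorem for a smoothly evolving bounded domain applies, and only the normal component of the velocity contributes to the change of a domain integral:
\[
\frac{d}{dt}\gamma_{n+1}(\Omega_t)=\int_{M_t}G\,\langle \partial_t X,\nu\rangle\,d\mathcal{H}^n .
\]
Here the tangential part of $\partial_t X$ (which, writing $X=h\nu+\nabla h$, is exactly $\nabla h$) merely reparametrizes $M_t$ and does not move the enclosed region, so it drops out; concretely $\langle\nabla h,\nu\rangle=0$.

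Next I would compute the integrand along the inverse-Gauss-map parametrization, where $\langle X,\nu\rangle=h$ and $|X|^2=r^2=h^2+|\nabla h|^2$. From \eqref{yibanfangcheng2},
\[
\langle \partial_t X,\nu\rangle=-\theta(t)e^{r^2/2}K h^p f+h,
\]
while $G$ restricted to $M_t$ equals $\frac{1}{(\sqrt{2\pi})^{n+1}}e^{-r^2/2}$ and $d\mathcal{H}^n=\frac{1}{K}\,dx$ by \eqref{7}. Substituting, the factor $e^{r^2/2}$ in the first term cancels $e^{-r^2/2}$ and the factor $K^{-1}$ cancels $K$, so
\[
\frac{d}{dt}\gamma_{n+1}(\Omega_t)=-\frac{\theta(t)}{(\sqrt{2\pi})^{n+1}}\int_{\mathbb{S}^n}h^p f\,dx+\frac{1}{(\sqrt{2\pi})^{n+1}}\int_{\mathbb{S}^n}e^{-r^2/2}\,\frac{h}{K}\,dx .
\]
Now invoke the volume-element identity \eqref{140}, namely $\frac{h(x)}{K}\,dx=r^{n+1}(\xi)\,d\xi$, to rewrite the second integral as $\int_{\mathbb{S}^n}e^{-r^2/2}r^{n+1}\,d\xi$.

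Finally, by the very definition of $\theta(t)$ one has
\[
\theta(t)\int_{\mathbb{S}^n}h^p(x,t)f(x)\,dx=\int_{\mathbb{S}^n}e^{-r^2/2}r^{n+1}(\xi,t)\,d\xi,
\]
so the two terms cancel and $\frac{d}{dt}\gamma_{n+1}(\Omega_t)\equiv0$ on $[0,T)$; integrating in $t$ gives the claim. I do not expect a genuine obstacle here: the only points requiring care are the justification of the transport theorem / differentiation under the integral sign on the evolving convex domain (supplied by smoothness, uniform convexity, and the origin remaining interior, so that $h,K,r$ are smooth and bounded above and below on $[0,T']$ for $T'<T$) and matching the change of variables $x\leftrightarrow\xi$ with the precise normalization built into $\theta(t)$. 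The factor $e^{r^2/2}$ in the flow speed is engineered precisely so that the Gaussian weight $e^{-|x|^2/2}=e^{-r^2/2}$ is absorbed in this cancellation.
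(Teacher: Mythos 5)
Your proof is correct and follows essentially the same strategy as the paper: differentiate $\gamma_{n+1}(\Omega_t)$ along the flow, plug in the evolution equation, change variables between the $\xi$-parametrization and the $x$-parametrization via $\frac{h}{K}\,dx=r^{n+1}\,d\xi$, and observe that the definition of $\theta(t)$ is engineered exactly so that the two resulting terms cancel. The only difference is bookkeeping at the start: the paper writes $\gamma_{n+1}(\Omega_t)$ in polar coordinates, differentiates under the integral sign, and passes from $r_t$ to $h_t$ via the identity $r_t/r=h_t/h$ (equation \eqref{d}); you instead invoke the transport theorem for the moving domain and read off the normal velocity $\langle\partial_tX,\nu\rangle=h_t$ directly. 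These are two standard ways to compute the same derivative of a volume integral, and they converge to the identical integral identity before the cancellation. Your parenthetical remark that the tangential part of $\partial_tX$ is $\nabla h$ is correct here (it comes from decomposing the additive term $X=h\nu+\nabla h$ in the flow equation), and the observation that $e^{r^2/2}$ in the speed is designed to cancel the Gaussian weight is exactly the right intuition.
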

	\begin{proof}
		Let $r(\cdot, t)$ and $h(\cdot,t)$ be the radial and support function of $\Omega_t$. Applying polar coordinates, \eqref{d}, \eqref{8} and \eqref{Jac}, 
		we have
		\begin{align*}
			\frac{d}{dt}\gamma_{n+1}(\Omega_t)&=\frac{1}{(\sqrt{2\pi})^{n+1}} \frac{d}{dt}\int_{\mathbb{S}^n}\int_0^{r(\xi, t)}e^{-\frac{r^2}{2}}r^{n}dr d\xi\nonumber\\
			&=\frac{1}{(\sqrt{2\pi})^{n+1}} \int_{\mathbb{S}^n} e^{-\frac{r^2}{2}}r^{n}r_td\xi\nonumber\\
			&=\frac{1}{(\sqrt{2\pi})^{n+1}}\int_{\mathbb{S}^n}e^{-\frac{r^2}{2}}r^{n+1}\frac{h_t}{h}d\xi\nonumber\\
			&=\frac{1}{(\sqrt{2\pi})^{n+1}}\int_{\mathbb{S}^n}e^{-{\frac{|\nabla h|^2+h^2}{2}}}\frac{h_t}{K}dx\nonumber\\
			&=-\frac{1}{(\sqrt{2\pi})^{n+1}}\theta(t)\int_{\mathbb{S}^n}fh^pdx+\frac{1}{(\sqrt{2\pi})^{n+1}}\int_{\mathbb{S}^n}e^{-{\frac{|\nabla h|^2+h^2}{2}}}\frac{h}{K}dx\nonumber\\
			&= 0.
		\end{align*}
		The last equality holds from the definition of $\theta(t)$. This finishes the proof.
	\end{proof}
	
	The next two lemmata show that the functionals (\ref{c}) and (\ref{flow03}) are non-increasing along the flows (\ref{yibanfangcheng2}) and (\ref{flow02}), respectively.
	
	\begin{lemma}\label{2.2}
		The functional (\ref{c}) is non-increasing along the flow (\ref{yibanfangcheng2}). Namely $\frac{d}{dt}\Psi(M_t)\leq 0$, and the equality holds if and only if $M_t$ satisfies the elliptic equation (\ref{yibanfangcheng1}).
	\end{lemma}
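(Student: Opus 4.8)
The plan is to differentiate $\Psi(M_t)$ directly and reduce the sign question to a single application of the Cauchy--Schwarz inequality. Since $M_t$ is smooth on $[0,T)$, we may differentiate under the integral sign and substitute the scalar evolution equation \eqref{8} for the support function. For $p\neq 0$ this gives
\[
	\frac{d}{dt}\Psi(M_t)=\int_{\mathbb{S}^n}f h^{p-1}h_t\,dx=-\theta(t)\int_{\mathbb{S}^n}f^2 h^{2p-1}e^{\frac{r^2}{2}}K\,dx+\int_{\mathbb{S}^n}f h^{p}\,dx,
\]
and for $p=0$ the same computation with $\Psi(M_t)=\int_{\mathbb{S}^n}f\log h\,dx$ gives
\[
	\frac{d}{dt}\Psi(M_t)=\int_{\mathbb{S}^n}\frac{f}{h}h_t\,dx=-\theta(t)\int_{\mathbb{S}^n}\frac{f^2}{h}e^{\frac{r^2}{2}}K\,dx+\int_{\mathbb{S}^n}f\,dx.
\]
So in both cases the sign of $\frac{d}{dt}\Psi(M_t)$ reduces to comparing $\int_{\mathbb{S}^n}f h^{p}\,dx$ (resp.\ $\int_{\mathbb{S}^n}f\,dx$) with $\theta(t)$ times a weighted integral of $f^2 K$.

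Next I would rewrite $\theta(t)$ using the Jacobian identity in \eqref{140}--\eqref{Jac}, namely $r^{n+1}(\xi)\,d\xi=\frac{h(x)}{K}\,dx$ under the change of variables $x\mapsto\xi$ associated to \eqref{150}, so that the definition of $\theta(t)$ becomes
\[
	\theta(t)\int_{\mathbb{S}^n}h^{p}f\,dx=\int_{\mathbb{S}^n}e^{-\frac{r^2}{2}}\frac{h}{K}\,dx.
\]
The main step is then Cauchy--Schwarz applied to the factorisation $f h^{p}=\big(f^2 h^{2p-1}e^{r^2/2}K\big)^{1/2}\big(h\,e^{-r^2/2}K^{-1}\big)^{1/2}$, which yields
\[
	\Big(\int_{\mathbb{S}^n} f h^{p}\,dx\Big)^2\le\Big(\int_{\mathbb{S}^n} f^2 h^{2p-1}e^{\frac{r^2}{2}}K\,dx\Big)\Big(\int_{\mathbb{S}^n} h\,e^{-\frac{r^2}{2}}K^{-1}\,dx\Big)=\theta(t)\Big(\int_{\mathbb{S}^n} f h^{p}\,dx\Big)\Big(\int_{\mathbb{S}^n} f^2 h^{2p-1}e^{\frac{r^2}{2}}K\,dx\Big).
\]
Dividing by $\int_{\mathbb{S}^n} f h^{p}\,dx>0$ gives $\int_{\mathbb{S}^n} f h^{p}\,dx\le\theta(t)\int_{\mathbb{S}^n} f^2 h^{2p-1}e^{r^2/2}K\,dx$, hence $\frac{d}{dt}\Psi(M_t)\le 0$; the case $p=0$ is handled in exactly the same way with $h^{p}$ replaced by $1$ and $h^{2p-1}$ by $h^{-1}$.

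Finally, for the equality statement: equality in Cauchy--Schwarz holds precisely when the two factors are proportional a.e., i.e.\ when $f h^{p-1}e^{r^2/2}K\equiv c_0$ for some positive constant $c_0$; since $K=1/\det(h_{ij}+h\delta_{ij})$ and $r^2=|\nabla h|^2+h^2$, this is exactly equation \eqref{yibanfangcheng1} with $c=c_0$. Substituting $f=c_0 e^{-r^2/2}h^{1-p}/K$ back into the definition of $\theta(t)$ and using the same Jacobian identity shows $c_0=1/\theta(t)$, so the right-hand side of \eqref{8} vanishes identically and $M_t$ is stationary; conversely, if $M_t$ solves \eqref{yibanfangcheng1} the two factors are proportional and $\frac{d}{dt}\Psi(M_t)=0$. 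I do not anticipate a real obstacle here --- the computation is short; the only points needing care are the bookkeeping of exponents in the Cauchy--Schwarz factorisation, the justification of the change of variables $x\mapsto\xi$ (which is precisely \eqref{140}), and carrying the logarithmic case $p=0$ in parallel with $p\neq 0$.
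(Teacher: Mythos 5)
Your proof is correct and follows essentially the same approach as the paper: differentiate $\Psi$, substitute the evolution equation \eqref{8} for $h_t$, rewrite $\theta(t)$ via the Jacobian identity \eqref{Jac}, and apply Cauchy--Schwarz (the paper invokes it as H\"older's inequality) to the two resulting integrals, with the equality case characterised by proportionality of the integrands. You spell out the $p=0$ case and the identification $c_0=1/\theta(t)$ a bit more explicitly than the paper, but the argument is identical in content.
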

	\begin{proof}
		By (\ref{8}), we have
		\begin{align*}
			\frac{d}{dt}\Psi(M_t)&= \int_{\mathbb{S}^n}f(x)h^{p-1}h_tdx\nonumber\\
			&=-\theta(t)\int_{\mathbb{S}^n}e^{{\frac{|\nabla h|^2+h^2}{2}}}K{f}^2h^{2p-1}dx+\int_{\mathbb{S}^n}f(x)h^{p}dx\nonumber\\
			&=\{\int_{\mathbb{S}^n}f(x)h^{p}dx\}^{-1}\{-\int_{\mathbb{S}^n}e^{-{\frac{|\nabla h|^2+h^2}{2}}}\frac{h}{K}dx\int_{\mathbb{S}^n}e^{\frac{|\nabla h|^2+h^2}{2}}Kf^2h^{2p-1}dx+(\int_{\mathbb{S}^n}f(x)h^{p}dx)^2\}\nonumber\\
			&\leq 0,
		\end{align*}
		where the last inequality holds from H\"{o}lder inequality, and the equality holds if and only if
		\[
		fh^{p-1}=c\frac{1}{K}e^{-{\frac{|\nabla h|^2+h^2}{2}}}.
		\]
		Namely $M_t$ satisfies (\ref{yibanfangcheng1}) with $\frac{1}{c}=\theta(t)$.
	\end{proof}

	\begin{lemma}\label{2.1}
		The functional (\ref{flow03}) is non-increasing along the flow (\ref{flow02}). Namely $\frac{d}{dt}\Phi(M_t)\leq 0$, and the equality holds if and only if $M_t$ satisfies the elliptic equation (\ref{flow01}).
	\end{lemma}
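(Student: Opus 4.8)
The plan is to mimic the computation in Lemma \ref{2.2}, differentiating $\Phi(M_t)$ along the flow \eqref{flow02} and showing the derivative is a single term that is manifestly $\le 0$, with equality forcing the elliptic equation \eqref{flow01}. I would split $\Phi$ into its two summands $\Phi_1=\frac{1}{p}\int_{\mathbb{S}^n}fh^p\,dx$ and $\Phi_2=-\frac{1}{(\sqrt{2\pi})^{n+1}}\int_{\mathbb{S}^n}d\xi\int_0^{r(\xi,t)}e^{-s^2/2}s^n\,ds$, noting $\Phi_2=-\gamma_{n+1}(\Omega_t)$ in polar coordinates. For $\Phi_1$, using \eqref{8-1} directly,
\[
\frac{d}{dt}\Phi_1=\int_{\mathbb{S}^n}f\,h^{p-1}h_t\,dx
=-(\sqrt{2\pi})^{n+1}\int_{\mathbb{S}^n}e^{\frac{|\nabla h|^2+h^2}{2}}K\,f^2 h^{2p-1}\,dx+\int_{\mathbb{S}^n}f\,h^p\,dx.
\]
For $\Phi_2$, I would reuse the chain of equalities already established in the proof of Lemma \ref{gamma-invariant} (via \eqref{d} and the Jacobian \eqref{Jac}), but now with $\theta(t)=(\sqrt{2\pi})^{n+1}$, giving
\[
\frac{d}{dt}\Phi_2=-\frac{d}{dt}\gamma_{n+1}(\Omega_t)
=-\frac{1}{(\sqrt{2\pi})^{n+1}}\int_{\mathbb{S}^n}e^{-\frac{|\nabla h|^2+h^2}{2}}\frac{h_t}{K}\,dx
=\int_{\mathbb{S}^n}f\,h^p\,dx-\int_{\mathbb{S}^n}f\,h^p\,dx,
\]
wait—more carefully, substituting $h_t$ from \eqref{8-1} into the integrand $e^{-\frac{|\nabla h|^2+h^2}{2}}h_t/K$ yields $-(\sqrt{2\pi})^{n+1}fh^p+e^{-\frac{|\nabla h|^2+h^2}{2}}h/K$, so that $\frac{d}{dt}\Phi_2=\int_{\mathbb{S}^n}fh^p\,dx-\frac{1}{(\sqrt{2\pi})^{n+1}}\int_{\mathbb{S}^n}e^{-\frac{|\nabla h|^2+h^2}{2}}\frac{h}{K}\,dx$.

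Adding the two contributions, the two copies of $\int fh^p\,dx$ from $\frac{d}{dt}\Phi_1$ and $\frac{d}{dt}\Phi_2$ do \emph{not} cancel directly; instead I expect the correct bookkeeping to leave
\[
\frac{d}{dt}\Phi(M_t)=-(\sqrt{2\pi})^{n+1}\int_{\mathbb{S}^n}e^{\frac{|\nabla h|^2+h^2}{2}}K f^2 h^{2p-1}\,dx
-\frac{1}{(\sqrt{2\pi})^{n+1}}\int_{\mathbb{S}^n}e^{-\frac{|\nabla h|^2+h^2}{2}}\frac{h}{K}\,dx
+2\int_{\mathbb{S}^n}f\,h^p\,dx,
\]
and then the elementary inequality $2ab\le a^2+b^2$ applied pointwise with $a=(\sqrt{2\pi})^{\frac{n+1}{2}}e^{\frac{|\nabla h|^2+h^2}{4}}K^{1/2}f h^{p-1/2}$ and $b=(\sqrt{2\pi})^{-\frac{n+1}{2}}e^{-\frac{|\nabla h|^2+h^2}{4}}K^{-1/2}h^{1/2}$ gives $2\int fh^p\,dx\le$ (sum of the other two terms), hence $\frac{d}{dt}\Phi\le0$. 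Equality in $2ab\le a^2+b^2$ holds iff $a=b$ pointwise, i.e. $(\sqrt{2\pi})^{n+1}e^{\frac{|\nabla h|^2+h^2}{2}}Kfh^{p-1}=1$, which is precisely \eqref{flow01}; conversely if $M_t$ solves \eqref{flow01} then $a\equiv b$ and $\frac{d}{dt}\Phi=0$.

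The main point requiring care is the identity $\frac{d}{dt}\Phi_2=-\frac{d}{dt}\gamma_{n+1}(\Omega_t)$ together with the passage from the $\xi$-variable to the $x$-variable via \eqref{d} and \eqref{Jac}; this is routine but must be done exactly as in Lemma \ref{gamma-invariant}, only without invoking the definition of $\theta(t)$ (since here $\theta$ is the constant $(\sqrt{2\pi})^{n+1}$, the cancellation that produced $0$ there no longer occurs, and one retains the term $\frac{1}{(\sqrt{2\pi})^{n+1}}\int e^{-\frac{|\nabla h|^2+h^2}{2}}\frac{h}{K}\,dx$). Everything else is an application of the Cauchy–Schwarz / AM–GM inequality in the form $2ab\le a^2+b^2$, which is the analogue of the Hölder step in Lemma \ref{2.2}.
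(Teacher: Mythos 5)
Your proposal is correct and follows essentially the same route as the paper: both differentiate the two summands of $\Phi$, convert the $\xi$-integral to an $x$-integral via \eqref{d} and \eqref{Jac}, substitute $h_t$ from \eqref{8-1}, and recognize a pointwise perfect square. The only cosmetic difference is that the paper combines the two integrands (both carrying the common factor $h_t$) \emph{before} substituting $h_t$, which yields $-(\sqrt{2\pi})^{n+1}\int e^{r^2/2}Kh\bigl(fh^{p-1}-\tfrac{1}{(\sqrt{2\pi})^{n+1}}\tfrac{1}{K}e^{-r^2/2}\bigr)^2dx$ directly, whereas you substitute first and then invoke $2ab\le a^2+b^2$ — expanding that square shows the two computations are identical.
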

	
	\begin{proof}
		By use of (\ref{flow03}) and \eqref{8-1}, we have  
		\begin{align*}
			\frac{d}{dt}\Phi(M_t)&= \int_{\mathbb{S}^n}f(x)h^{p-1}h_tdx-\frac{1}{(\sqrt{2\pi})^{n+1}}\int_{\mathbb{S}^n}e^{-\frac{r^2}{2}}r^n(\xi,t)r_t(\xi,t)d\xi\nonumber\\
			&=\int_{\mathbb{S}^n}f(x)h^{p-1}h_t-\frac{1}{(\sqrt{2\pi})^{n+1}}e^{-{\frac{|\nabla h|^2+h^2}{2}}}\frac{h_t}{K}dx\nonumber\\
			&=-(\sqrt{2\pi})^{n+1}\int_{\mathbb{S}^n}e^{\frac{r^2}{2}}Kh(fh^{p-1}-\frac{1}{(\sqrt{2\pi})^{n+1}}\frac{1}{K}e^{-{\frac{|\nabla h|^2+h^2}{2}}})^2dx\nonumber\\
			&\leq 0.
		\end{align*}
		Clearly $\frac{d}{dt}\Phi(M_t)=0$ if and only if
		\[fh^{p-1}=\frac{1}{(\sqrt{2\pi})^{n+1}}\frac{1}{K}e^{-{\frac{|\nabla h|^2+h^2}{2}}}.\]
		Namely $M_t$ satisfies (\ref{flow01}).
	\end{proof}

	The next lemma is necessary to prove the $L_p$-variational formula for $p=0$.
	\begin{lemma}\label{p1}
		For $p=0$, let $Q\in \mathcal{K}_0$, $g:\mathbb{S}^n\rightarrow\mathbb{R}$ be a continuous function. Let $\delta>0$ be small enough. For each $t \in (-\delta,\delta)$, define the continuous function $h_t:\mathbb{S}^n\rightarrow(0,\infty)$ as
		\[
		h_t(v)=h_{Q}(v)\cdot e^{tg(v)}, \quad  v \in \mathbb{S}^n.
		\]
		Then
		\[
		\lim_{t\rightarrow 0}\frac{r_{[h_t]}(u)-r_Q(u)}{t}=g(\alpha_Q(u))r_Q(u),
		\]
		holds for almost all $u \in \mathbb{S}^n$. In addition, there exists $M>0$ such that
		\[
		|r_{[h_t]}(u)-r_Q(u)|\leq M|t|,
		\]
		for all $u \in \mathbb{S}^n$ and $t \in (-\delta,\delta)$.
	\end{lemma}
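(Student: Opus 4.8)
The starting point is the explicit description of the radial function of a Wulff shape. Since $[h_t]=\{x\in\mathbb{R}^{n+1}:x\cdot v\le h_t(v)\ \forall v\in\mathbb{S}^n\}$, one has
\[
r_{[h_t]}(u)=\min_{\{v\in\mathbb{S}^n:\,u\cdot v>0\}}\frac{h_t(v)}{u\cdot v}=\min_{\{v\in\mathbb{S}^n:\,u\cdot v>0\}}\frac{h_Q(v)\,e^{tg(v)}}{u\cdot v},
\]
where the minimum is attained because the ratio tends to $+\infty$ as $u\cdot v\to0^+$ while $h_Q$ is bounded away from $0$ (as $Q\in\mathcal{K}_0$). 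I would prove the global Lipschitz bound first, since it is the easy half and it is reused below. Put $C=\max_{\mathbb{S}^n}|g|$ and $R=\max_{\mathbb{S}^n}r_Q$. From $e^{-C|t|}h_Q\le h_t\le e^{C|t|}h_Q$ together with the facts that $h\mapsto[h]$ is inclusion-monotone and $[\lambda h_Q]=\lambda Q$, one gets $e^{-C|t|}r_Q(u)\le r_{[h_t]}(u)\le e^{C|t|}r_Q(u)$ for every $u$, hence $|r_{[h_t]}(u)-r_Q(u)|\le R(e^{C|t|}-1)\le RC e^{C\delta}|t|=:M|t|$ for all $u\in\mathbb{S}^n$ and $|t|<\delta$.

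For the derivative, fix $u\notin\omega_Q$ (a set of full measure). For such $u$ the function $v\mapsto h_Q(v)/(u\cdot v)$ has a unique minimizer, namely $v_0:=\alpha_Q(u)$, with $u\cdot v_0>0$ and $r_Q(u)=h_Q(v_0)/(u\cdot v_0)$. Testing the minimum at $v_0$ gives $r_{[h_t]}(u)\le r_Q(u)e^{tg(v_0)}$, so $\limsup_{t\to0^+}\frac{r_{[h_t]}(u)-r_Q(u)}{t}\le r_Q(u)g(v_0)$ and, symmetrically, $\liminf_{t\to0^-}\frac{r_{[h_t]}(u)-r_Q(u)}{t}\ge r_Q(u)g(v_0)$. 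For the reverse bounds, let $v_t$ realize the minimum defining $r_{[h_t]}(u)$. The Lipschitz estimate keeps $r_{[h_t]}(u)$ between two positive constants, and with $h_Q\ge c_0>0$ this forces $u\cdot v_t\ge c_0 e^{-2C\delta}/R>0$, so the $v_t$ lie in a fixed compact subset of $\{u\cdot v>0\}$; any subsequential limit $v_*$ satisfies $r_Q(u)=\lim r_{[h_t]}(u)=h_Q(v_*)/(u\cdot v_*)$ by continuity, hence $v_*=v_0$ by uniqueness, and therefore $v_t\to v_0$ as $t\to0$. Since $r_{[h_t]}(u)=\frac{h_Q(v_t)}{u\cdot v_t}e^{tg(v_t)}\ge r_Q(u)e^{tg(v_t)}$, we obtain for $t>0$ that $\frac{r_{[h_t]}(u)-r_Q(u)}{t}\ge r_Q(u)\frac{e^{tg(v_t)}-1}{t}\to r_Q(u)g(v_0)$, using $v_t\to v_0$ and continuity of $g$; the case $t<0$ is symmetric. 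Combining the one-sided estimates yields $\lim_{t\to0}\frac{r_{[h_t]}(u)-r_Q(u)}{t}=g(\alpha_Q(u))\,r_Q(u)$.

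The one genuinely delicate point is the compactness step $v_t\to v_0$: it rests on the almost-everywhere uniqueness of the minimizer of $v\mapsto h_Q(v)/(u\cdot v)$ — precisely why the formula holds only for a.e. $u$ — together with the quantitative lower bound on $u\cdot v_t$ that prevents $v_t$ from drifting toward the equator $\{u\cdot v=0\}$; both the uniform Lipschitz estimate and the positivity $h_Q\ge c_0$ coming from $Q\in\mathcal{K}_0$ enter here. The remaining ingredients — monotonicity and homogeneity of the Wulff-shape construction, boundedness and continuity of $g$, and continuity of $h_Q$ — are routine.
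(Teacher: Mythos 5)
Your proof is correct, but it takes a genuinely different route from the paper's. The paper disposes of this lemma in one line: after noting that $\log h_t(v)=\log h_Q(v)+tg(v)$, it simply invokes Lemmata 2.8 and 4.1 of Huang--Lutwak--Yang--Zhang (\emph{Acta Math.} 216, 2016), which establish exactly this variational formula for logarithmic families of Wulff shapes. Your argument instead rederives the result from scratch, using the explicit formula $r_{[h]}(u)=\min_{u\cdot v>0}h(v)/(u\cdot v)$. The structure is sound: the Lipschitz bound follows cleanly from monotonicity and homogeneity of $h\mapsto[h]$ together with $e^{-C|t|}h_Q\le h_t\le e^{C|t|}h_Q$; the one-sided derivative bounds come from testing at the unperturbed minimizer $v_0=\alpha_Q(u)$ (for the easy direction) and at the perturbed minimizer $v_t$ (for the hard direction); and the compactness argument $v_t\to v_0$ is carried out correctly --- the lower bound $u\cdot v_t\ge c_0e^{-2C\delta}/R$ keeps $v_t$ away from the equator, and a.e.\ uniqueness of the minimizer of $v\mapsto h_Q(v)/(u\cdot v)$ (i.e.\ $u\notin\omega_Q$) forces every subsequential limit to be $v_0$. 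The only minor point to flag is that when passing from $r_Q(u)\tfrac{e^{tg(v_t)}-1}{t}$ to $r_Q(u)g(v_0)$, one should note $tg(v_t)\to0$ uniformly so that the factor $\tfrac{e^{tg(v_t)}-1}{tg(v_t)}\to1$; this is implicit in your ``using $v_t\to v_0$ and continuity of $g$'' and is fine. What your approach buys is a self-contained, elementary proof readable without chasing the HLYZ reference; what the paper's approach buys is brevity and the slightly stronger setting of HLYZ Lemma 4.1, which allows an $o(t)$ error in the exponent rather than the exact logarithmic family $h_Q e^{tg}$ treated here.
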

	
	\begin{proof}
		Since
		\[
		h_t(v)=h_{Q}(v)\cdot e^{tg(v)},
		\]
		we have
		\[
		\log h_t(v)=\log h_Q(v)+tg(v).
		\]
		Then the result is an application of  Lemmata 2.8 and 4.1 in \cite{2016HLYZ}.
	\end{proof}
	
	Now we derive the $L_p$-variational formula for $p=0$. The $L_p$-variational formula for $p\neq 0$ has been obtained in \cite{2021Liu}.
	
	\begin{lemma}\label{p2}
		For $p=0$, let $Q\in \mathcal{K}_0$, $g:\mathbb{S}^n\rightarrow\mathbb{R}$ be a continuous function. Let  $\delta>0$ be  small enough. For each $t \in (-\delta,\delta)$, define the continuous function $h_t:\mathbb{S}^n\rightarrow(0,\infty)$ as
		\[
		h_t(v)=h_{Q}(v)\cdot e^{tg(v)}, \quad  v \in \mathbb{S}^n.
		\]
		Then
		\begin{equation}\label{var}
			\lim_{t\rightarrow 0}\frac{\gamma_{n+1}([h_t])-\gamma_{n+1}(Q)}{t}=\int_{\mathbb{S}^n} g\ dS_{p,\gamma_{n+1},Q}.
		\end{equation}
	\end{lemma}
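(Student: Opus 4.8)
The plan is to differentiate the Gaussian volume written in polar coordinates, using Lemma \ref{p1} to control the radial functions of the Wulff shapes $[h_t]$, and then to convert the resulting integral from the radial parametrization to the support function parametrization via the Jacobian identity \eqref{Jac}.

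First I would write, in polar coordinates,
\[
\gamma_{n+1}([h_t])=\frac{1}{(\sqrt{2\pi})^{n+1}}\int_{\mathbb{S}^n}\int_0^{r_{[h_t]}(u)}e^{-\frac{s^2}{2}}s^n\,ds\,du ,
\]
so that
\[
\frac{\gamma_{n+1}([h_t])-\gamma_{n+1}(Q)}{t}=\frac{1}{(\sqrt{2\pi})^{n+1}}\int_{\mathbb{S}^n}\frac{1}{t}\int_{r_Q(u)}^{r_{[h_t]}(u)}e^{-\frac{s^2}{2}}s^n\,ds\,du .
\]
For fixed $u$ the mean value theorem rewrites the inner expression as $\frac{r_{[h_t]}(u)-r_Q(u)}{t}\,e^{-\sigma^2/2}\sigma^n$ with $\sigma$ between $r_Q(u)$ and $r_{[h_t]}(u)$; the Lipschitz bound $|r_{[h_t]}(u)-r_Q(u)|\le M|t|$ from Lemma \ref{p1} together with $\sup_{s\ge 0}e^{-s^2/2}s^n<\infty$ bounds this uniformly in $u$ and $t$, while the pointwise limit in Lemma \ref{p1} gives, for a.e. $u$,
\[
\lim_{t\to 0}\frac1t\int_{r_Q(u)}^{r_{[h_t]}(u)}e^{-\frac{s^2}{2}}s^n\,ds=e^{-\frac{r_Q(u)^2}{2}}r_Q(u)^{n}\,g(\alpha_Q(u))\,r_Q(u).
\]
Hence, by dominated convergence,
\[
\lim_{t\to 0}\frac{\gamma_{n+1}([h_t])-\gamma_{n+1}(Q)}{t}=\frac{1}{(\sqrt{2\pi})^{n+1}}\int_{\mathbb{S}^n}e^{-\frac{r_Q(u)^2}{2}}r_Q(u)^{n+1}\,g(\alpha_Q(u))\,du .
\]

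It then remains to identify the right-hand side with $\int_{\mathbb{S}^n}g\,dS_{0,\gamma_{n+1},Q}$. I would do this by the change of variables $u\mapsto v=\alpha_Q(u)$: by \eqref{140} and \eqref{Jac} one has $r_Q^{n+1}(u)\,du=\frac{h_Q(v)}{K}\,dv$ along the correspondence $r_Q(u)u=h_Q(v)v+\nabla h_Q(v)$, and at this boundary point $r_Q(u)^2=h_Q(v)^2+|\nabla h_Q(v)|^2$ and $K^{-1}=\det(h_{Q,ij}+h_Q\delta_{ij})$. Substituting turns the integral into
\[
\frac{1}{(\sqrt{2\pi})^{n+1}}\int_{\mathbb{S}^n}e^{-\frac{h_Q(v)^2+|\nabla h_Q(v)|^2}{2}}\,h_Q(v)\,\det(h_{Q,ij}+h_Q\delta_{ij})\,g(v)\,dv ,
\]
which is exactly $\int_{\mathbb{S}^n}g\,dS_{0,\gamma_{n+1},Q}$, since the density of $S_{p,\gamma_{n+1},Q}$ at $p=0$ carries the factor $h_Q^{\,1-p}=h_Q$. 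This establishes \eqref{var}.

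The step I expect to be the main obstacle is this change of variables for a general $Q\in\mathcal{K}_0$, which need not be of class $C^2$: it has to be carried out on the full-measure set $\mathbb{S}^n\setminus\omega_Q$ where $\alpha_Q$ is single-valued, with \eqref{Jac} interpreted in the measure-theoretic sense there, and the outcome must be matched with the definition of $S_{0,\gamma_{n+1},Q}$, which is given through $\mathcal{H}^n$ on $\nu_Q^{-1}$ and presupposes no smoothness. This is the standard polar-coordinate/radial-Gauss-image change of variables relating integrals in the radial parametrization to surface area measures; alternatively it may be obtained by approximating $Q$ in the Hausdorff metric by smooth uniformly convex bodies and passing to the limit, every quantity involved being continuous under such approximation. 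Everything else — the interchange of limit and integral above and the elementary estimates — is routine once Lemma \ref{p1} is in hand.
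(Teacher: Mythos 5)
Your first half is exactly the paper's argument: write $\gamma_{n+1}([h_t])$ in polar coordinates, use the Lipschitz bound from Lemma~\ref{p1} together with the mean value theorem to dominate the difference quotients, and pass to the limit by dominated convergence to arrive at
\[
\lim_{t\to 0}\frac{\gamma_{n+1}([h_t])-\gamma_{n+1}(Q)}{t}=\frac{1}{(\sqrt{2\pi})^{n+1}}\int_{\mathbb{S}^n}g(\alpha_Q(u))\,e^{-\frac{r_Q(u)^2}{2}}\,r_Q(u)^{n+1}\,du.
\]
The paper even introduces the same antiderivative $F(s)=\int_0^s e^{-r^2/2}r^n\,dr$ that you implicitly use.

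Where you diverge is in the final identification. The paper makes the change of variables $u\mapsto x=r_Q(u)u\in\partial Q$, using only that $r_Q(u)^{n+1}\,du=(x\cdot\nu_Q(x))\,d\mathcal{H}^n(x)$ almost everywhere, to land directly on
\[
\frac{1}{(\sqrt{2\pi})^{n+1}}\int_{\partial Q}g(\nu_Q(x))\,e^{-\frac{|x|^2}{2}}(x\cdot\nu_Q(x))\,d\mathcal{H}^n(x),
\]
which is $\int_{\mathbb{S}^n}g\,dS_{0,\gamma_{n+1},Q}$ verbatim from the definition of the measure, with no regularity hypothesis on $Q$ beyond $Q\in\mathcal{K}_0$. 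You instead push all the way to the support-function parametrization via \eqref{Jac}, which introduces the Gauss curvature $K$ and hence tacitly assumes $Q$ is $C^2_+$; you correctly flag this as the weak point and propose either an a.e.\ interpretation or smooth approximation. Both rescue devices work, but they are unnecessary detours: stopping at the $\partial Q$ boundary integral, as the paper does, matches the definition of $S_{0,\gamma_{n+1},Q}$ exactly and handles general convex bodies in one step. The rest of your reasoning is sound.
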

	
	\begin{proof} The proof is a simply modification of the proof for $p\neq 0$ in \cite{2021Liu}. 
		By use of the polar coordinates, we have
		\[
		\gamma_{n+1}([h_t])=\frac{1}{(\sqrt{2\pi})^{n+1}}\int_{\mathbb{S}^n}\int_0^{r_{[h_t]}(\xi)}e^{\frac{-r^2}{2}}r^n\ drd\xi.
		\]
		Since $Q\in \mathcal{K}_0$ and $g\in C(\mathbb{S}^n)$, for $t$ close to $0$, there exists $M_1>0$ such that $[h_t]\subset M_1B$. Denote 
		$F(s)=\int_0^s e^\frac{-r^2}{2}r^n dr$. By mean value theorem,
		\[
		|F(r_{[h_t]}(u))-F(r_Q(u))|\leq |F'(\theta)||r_{[h_t]}(u)-r_Q(u)|<M|F'(\theta)||t|,
		\]
		where $M$ comes from Lemma \ref{p1}, and $\theta$ is between $r_{[h_t]}(u)$ and $r_Q(u)$. Since $[h_t]\subset M_1B$, we have $\theta \in (0,M_1]$. Therefore, by definition of $F$, we have $|F'(\theta)|$ is bounded from above
		by some constant which depends on $M_1$. Therefore, there exists $M_2 > 0$ such that
		\[
		|F(r_{[h_t]}(u))-F(r_Q(u))|\leq M_2|t|.
		\]
		Using dominated convergence theorem, together with Lemma \ref{p1} we have
		\begin{align*}
			\lim_{t\rightarrow 0}\frac{\gamma_{n+1}([h_t])-\gamma_{n+1}(Q)}{t} & = \frac{1}{(\sqrt{2\pi})^{n+1}}\int_{\mathbb{S}^n}g(\alpha_Q(u))e^{\frac{-r^2_Q(u)}{2}}r^{n+1}_Q(u)du\\
			&=\frac{1}{(\sqrt{2\pi})^{n+1}}\int_{\partial Q}g(\nu_Q(x))e^{\frac{-|x|^2}{2}}(x\cdot\nu_Q(x))d\mathcal{H}^n(x)\\
			&=\int_{\mathbb{S}^n}g\ dS_{p,\gamma_{n+1},Q}.
		\end{align*}
	\end{proof}

	

	\section{The variational method to generate solution}\label{sec3}
	
	By employing variational method, we study the $L_p$-Gaussian Minkowski problem for $p=0$. We prove the optimizer of a suitable variational is just the solution of the Minkowski problem of $S_{p,\gamma_{n+1},K}$ for $p=0$. The method comes from \cite{2016HLYZ, 2021Liu}.
	
	Consider the following minimum problem:
	\[
	\min \left\{\phi(Q):Q\in \mathcal{K}_e,\gamma_{n+1}(Q)=\frac{1}{2}\right\},
	\]
	where $\phi:\mathcal{K}_e \rightarrow \mathbb{R}$ is given by
	\[\phi(Q)=\int_{\mathbb{S}^n}\log h_Q(v)d\mu(v).\]
	
	\begin{lemma}\label{p31}
		If $K\in \mathcal{K}_e$ and satisfies
		\[
		\phi(K)=\min \left\{\phi(Q):Q\in \mathcal{K}_e,\gamma_{n+1}(Q)=\frac{1}{2}\right\},
		\]
		then this fact is equivalent to
		\[
		\varphi(h_K)=\min \left\{\varphi(z):z\in C_e^+(\mathbb{S}^{n}),\gamma_{n+1}([z])=\frac{1}{2}\right\},
		\]
		where $\varphi: C_e^+(\mathbb{S}^{n})\rightarrow \mathbb{R}$ is defined by
		\[
		\varphi(z)=\int_{\mathbb{S}^n}\log z(v)d\mu(v).
		\]
	\end{lemma}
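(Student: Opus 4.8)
The plan is to prove the equivalence by relating $\phi$ on $\mathcal{K}_e$ to $\varphi$ on $C^+_e(\mathbb{S}^n)$ through the Wulff-shape correspondence $z \mapsto [z]$, exploiting two elementary facts: that $h_{[z]} \le z$ pointwise and that $h_{[h_Q]} = h_Q$ for $Q \in \mathcal{K}_e$ (indeed $[h_Q] = Q$). Since $\mu$ is a (nonnegative) finite Borel measure, the monotonicity $h_{[z]} \le z$ immediately gives $\varphi(h_{[z]}) \le \varphi(z)$.

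First I would prove the forward direction. Suppose $K \in \mathcal{K}_e$ minimizes $\phi$ over $\{Q \in \mathcal{K}_e : \gamma_{n+1}(Q) = \tfrac12\}$. Note $z := h_K \in C^+_e(\mathbb{S}^n)$ and $[z] = K$, so $\gamma_{n+1}([z]) = \tfrac12$ and $\varphi(z) = \phi(K)$. Now take any competitor $w \in C^+_e(\mathbb{S}^n)$ with $\gamma_{n+1}([w]) = \tfrac12$. Then $[w] \in \mathcal{K}_e$ is admissible for the $\phi$-problem (it is $O$-symmetric since $w$ is even, and it lies in $\mathcal{K}_0$ because $w > 0$; here one should also confirm $[w] \in \mathcal{K}_e \subset \mathcal{K}$, which is clear). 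Hence $\phi([w]) \ge \phi(K)$. But $\phi([w]) = \int_{\mathbb{S}^n} \log h_{[w]} \, d\mu = \varphi(h_{[w]}) \le \varphi(w)$ by the monotonicity $h_{[w]} \le w$ and $\mu \ge 0$. Combining, $\varphi(w) \ge \varphi(h_{[w]}) = \phi([w]) \ge \phi(K) = \varphi(h_K)$, which is exactly the claim that $h_K$ minimizes $\varphi$.

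For the converse, suppose $h_K$ minimizes $\varphi$ over $\{z \in C^+_e(\mathbb{S}^n) : \gamma_{n+1}([z]) = \tfrac12\}$, and let $Q \in \mathcal{K}_e$ with $\gamma_{n+1}(Q) = \tfrac12$. Then $h_Q \in C^+_e(\mathbb{S}^n)$, $[h_Q] = Q$ so $\gamma_{n+1}([h_Q]) = \tfrac12$, and $h_Q$ is admissible for the $\varphi$-problem; therefore $\varphi(h_Q) \ge \varphi(h_K)$. Since $\varphi(h_Q) = \int_{\mathbb{S}^n}\log h_Q \, d\mu = \phi(Q)$ and likewise $\varphi(h_K) = \phi(K)$, we get $\phi(Q) \ge \phi(K)$, so $K$ minimizes $\phi$. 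Note also that the constraint $\gamma_{n+1}(K) = \tfrac12$ in the hypothesis of the $\phi$-problem is inherited because $K$ itself is admissible there (one applies $Q = K$), matching $\gamma_{n+1}([h_K]) = \gamma_{n+1}(K) = \tfrac12$.

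The only subtle point — and the thing worth spelling out carefully rather than the routine algebra — is the membership bookkeeping: ensuring that $[w] \in \mathcal{K}_e$ (equivalently that a positive even support-candidate generates an $O$-symmetric convex body containing the origin in its interior), and that the two optimization problems genuinely have the same feasible sets under the correspondence $Q \leftrightarrow h_Q$, $z \leftrightarrow [z]$. Once one records that $h_{[z]} \le z$ with equality $\mu$-a.e. being unnecessary for the argument (we only ever need the inequality together with $\mu \ge 0$), and that $[h_Q] = Q$ on $\mathcal{K}_o \supset \mathcal{K}_e$, the proof is a short two-way chase. I do not anticipate a genuine obstacle; the argument is the standard Aleksandrov-type reduction from convex bodies to support functions, and the Gaussian volume enters only through the constraint $\gamma_{n+1} = \tfrac12$, which is preserved verbatim by $[h_Q] = Q$.
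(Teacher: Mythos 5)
Your proof is correct and follows essentially the same route as the paper: it hinges on the facts $h_{[z]}\le z$ (hence $\varphi(h_{[z]})\le\varphi(z)$ for nonnegative $\mu$) and $[h_Q]=Q$, which make the two feasible sets correspond under $Q\leftrightarrow h_Q$, $z\leftrightarrow[z]$. The paper states the equivalence with a single "clearly" after recording $\varphi(z)\geq\varphi(h_{[z]})$; you simply unwind the two-way chase and the membership bookkeeping explicitly, which is a more complete write-up of the same argument.
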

	\begin{proof}
		It is easy to get $h_{[z]}\leq z$ and $[h_{[z]}]=[z]$ from the definition of the Wulff shape, thus we have
		\[
		\varphi(z)\geq \varphi(h_{[z]}).
		\]
		Clearly, $K\in \mathcal{K}_e$ and satisfies
		\[
		\phi(K)=\min \left\{\phi(Q):Q\in \mathcal{K}_e,\gamma_{n+1}(Q)=\frac{1}{2}\right\},
		\]
		if and only if
		\[
		\varphi(h_K)=\min \left\{\varphi(z): z\in C_e^+(\mathbb{S}^{n}),\gamma_{n+1}([z])=\frac{1}{2}\right\}.
		\]
	\end{proof}
	
	\begin{lemma}\label{p32}
		Let $\mu$ be a nonzero finite Borel measure on $\mathbb{S}^{n}$ and $p=0$. If $K\in \mathcal{K}_e$ and satisfies
		\[
		\phi(K)=\min \left\{\phi(Q):Q\in \mathcal{K}_e,\gamma_{n+1}(Q)=\frac{1}{2}\right\},
		\]
		then there exists a constant $c >0$ such that $\mu=c S_{p,\gamma_{n+1},K}$.
	\end{lemma}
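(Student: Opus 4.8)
The plan is to use the Lagrange multiplier technique, exploiting the variational formula for $\gamma_{n+1}$ proved in Lemma \ref{p2}. Since $K$ minimizes $\phi$ over the constraint set $\{Q \in \mathcal{K}_e : \gamma_{n+1}(Q) = \tfrac12\}$, by Lemma \ref{p31} the support function $h_K$ minimizes $\varphi(z) = \int_{\mathbb{S}^n} \log z \, d\mu$ over $\{z \in C_e^+(\mathbb{S}^n) : \gamma_{n+1}([z]) = \tfrac12\}$. First I would fix an arbitrary even continuous function $g \in C_e(\mathbb{S}^n)$ and consider the logarithmic family $h_t(v) = h_K(v) e^{t g(v)}$ for $t$ near $0$; note $h_t$ stays even and positive, so $[h_t] \in \mathcal{K}_e$. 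Because $\gamma_{n+1}$ is strictly increasing in the domain and $[h_t]$ varies continuously, one can correct $t$ by a scalar (or directly invoke the implicit function theorem via the nonvanishing of the derivative of $\gamma_{n+1}$) to produce a curve lying exactly on the constraint surface; the cleanest implementation is to set $\bar h_t = \lambda(t) h_t$ where $\lambda(t) > 0$ is chosen so that $\gamma_{n+1}([\bar h_t]) = \tfrac12$, with $\lambda(0) = 1$.

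Next I would differentiate the constraint $\gamma_{n+1}([\bar h_t]) = \tfrac12$ at $t = 0$. Writing $\bar h_t = h_K \cdot e^{t g + \log \lambda(t)}$, Lemma \ref{p2} (applied with the perturbing function $g + \lambda'(0)$, since $\log \lambda(t) = \lambda'(0) t + o(t)$) gives
\[
0 = \frac{d}{dt}\Big|_{t=0} \gamma_{n+1}([\bar h_t]) = \int_{\mathbb{S}^n} \big(g + \lambda'(0)\big)\, dS_{p,\gamma_{n+1},K} = \int_{\mathbb{S}^n} g\, dS_{p,\gamma_{n+1},K} + \lambda'(0)\, |S_{p,\gamma_{n+1},K}|,
\]
which solves for $\lambda'(0)$ in terms of $\int g\, dS_{p,\gamma_{n+1},K}$; here $|S_{p,\gamma_{n+1},K}| = S_{p,\gamma_{n+1},K}(\mathbb{S}^n) > 0$ since $K \in \mathcal{K}_0$ is a genuine convex body. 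Then I would differentiate the objective: since $\bar h_t$ is an admissible curve through $h_K$ and $h_K$ is the minimizer, $\frac{d}{dt}\big|_{t=0} \varphi(\bar h_t) = 0$. Computing directly from $\varphi(\bar h_t) = \int_{\mathbb{S}^n} \big(\log h_K + t g + \log \lambda(t)\big) d\mu$ yields
\[
0 = \int_{\mathbb{S}^n} g\, d\mu + \lambda'(0)\, |\mu|.
\]

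Finally I would eliminate $\lambda'(0)$ between the two displayed identities: substituting $\lambda'(0) = -\,|S_{p,\gamma_{n+1},K}|^{-1} \int g\, dS_{p,\gamma_{n+1},K}$ into the objective identity gives
\[
\int_{\mathbb{S}^n} g\, d\mu = \frac{|\mu|}{|S_{p,\gamma_{n+1},K}|} \int_{\mathbb{S}^n} g\, dS_{p,\gamma_{n+1},K}
\]
for every even $g \in C_e(\mathbb{S}^n)$. Setting $c = |\mu| / |S_{p,\gamma_{n+1},K}| > 0$, this says $\mu$ and $c\,S_{p,\gamma_{n+1},K}$ agree when integrated against all even continuous functions; since both measures are even (the latter because $K$ is $O$-symmetric), a test-function argument — splitting any $g \in C(\mathbb{S}^n)$ into even and odd parts and noting both measures annihilate genuinely... more carefully, that even measures are determined by their pairing with even test functions — gives $\mu = c\, S_{p,\gamma_{n+1},K}$. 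The main obstacle I anticipate is the careful justification that the normalized curve $\bar h_t$ is well-defined and differentiable at $t = 0$ (i.e. that $\lambda(t)$ exists and is $C^1$), which rests on the strict monotonicity of $t \mapsto \gamma_{n+1}(s[h_t])$ in the dilation parameter $s$ together with the differentiability supplied by Lemma \ref{p2}; and the bookkeeping ensuring that the perturbation $g + \lambda'(0)$ is legitimately of the form covered by Lemma \ref{p2}, which it is since constants are continuous and the logarithmic perturbation structure is preserved.
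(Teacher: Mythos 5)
Your proposal is correct and follows the same strategy as the paper, namely reducing to the minimization of $\varphi$ via Lemma~\ref{p31} and then applying a Lagrange multiplier argument using the variational formula in Lemma~\ref{p2} with the logarithmic family $h_t = h_K e^{tg}$. The difference is one of detail: the paper simply writes ``By the Lagrange multipliers argument, we may conclude that there exists a constant $c>0$'' and then equates the two derivatives, whereas you actually construct a curve $\bar h_t = \lambda(t)h_t$ on the constraint surface $\{\gamma_{n+1} = \tfrac12\}$, differentiate the constraint and the objective separately, and eliminate the multiplier. This buys you an explicit formula $c = |\mu|/|S_{p,\gamma_{n+1},K}|$, which in particular makes the positivity of $c$ transparent rather than asserted; the price is the extra bookkeeping you flag (existence and $C^1$-regularity of $\lambda(t)$, and the fact that the $t$-dependent perturbation $g + t^{-1}\log\lambda(t)$ is still handled by Lemma~\ref{p2}), both of which are standard but genuine points to verify. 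Two small remarks: the paper restricts to $g\in C_e^+(\mathbb{S}^n)$ while you allow all of $C_e(\mathbb{S}^n)$ --- either works, since $e^{tg}>0$ regardless of the sign of $g$; and your final sentence about recovering equality of measures from agreement on even test functions is cut off mid-thought, but the intended reasoning (both $\mu$ and $S_{p,\gamma_{n+1},K}$ are even measures, an even measure annihilates odd functions, so agreement on $C_e(\mathbb{S}^n)$ forces equality) is sound and should just be stated cleanly; note this uses the implicit standing assumption in this section that $\mu$ is even.
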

	\begin{proof}
		By Lemma \ref{p31} we get
		\begin{equation}\label{p33}
			\varphi(h_K)=\min \left\{\varphi(z):z\in C_e^+(\mathbb{S}^{n}),\gamma_{n+1}([z])=\frac{1}{2}\right\}.
		\end{equation}
		For any $g\in C^+_e(\mathbb{S}^n)$ and $t\in (-\delta,\delta)$ where $\delta>0$ is  sufficiently small, let
		\[
		h_t(v)=h_{K}(v)\cdot e^{tg(v)}.
		\]
		So (\ref{p31}) implies that $h_K$ being a minimizer. By the Lagrange multipliers argument, we may conclude that there exists a constant $c > 0$ such that
		\[
		\frac{d}{dt}\Big|_{t=0}\varphi(h_t)=c\frac{d}{dt}\Big|_{t=0}\gamma_{n+1}([h_t]).
		\]
		From the variational formula (\ref{var}), we have
		\[
		\int_{\mathbb{S}^n}g(v)d\mu(v)=c\int_{\mathbb{S}^n}g(v)dS_{0,\gamma_{n+1},K}.
		\]
		By the arbitrariness of $g$, we have $\mu=c S_{0,\gamma_{n+1},K}$.
	\end{proof}
	Before the proof of Theorem \ref{p0}, we also need  Lemma 6.2 of \cite{2013BLYZ}.
	\begin{lemma}\label{lem6.2}
		Suppose $\mu$ is a probability measure on $\mathbb{S}^{n}$ that satisfies the strict subspace
		concentration inequality. For each positive integer $l$, let $u_{1,l},\cdot\cdot\cdot,u_{n+1,l}$ be an orthonormal
		basis of $\mathbb{R}^{n+1}$, and suppose that $h_{1,l},\cdot\cdot\cdot,h_{n+1,l}$ are $n+1$ sequences of positive real numbers such
		that $h_{1,l}\leq\cdot\cdot\cdot\leq h_{n+1,l}$, and such that the product $h_{1,l}\cdot\cdot\cdot h_{n+1,l}\geq1$, and $\lim_{l\rightarrow\infty}h_{n+1,l} = \infty $.
		Then, for the cross-polytopes $Q_l=[\pm h_{1,l}u_{1,l},\cdot\cdot\cdot,\pm h_{n+1,l}u_{n+1,l}]$, the sequence
		\[\Phi_\mu(Q_l)=\int_{\mathbb{S}^{n}}\log h_{Q_l}d\mu,\]
		is not bounded from above.
	\end{lemma}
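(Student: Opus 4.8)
The plan is to prove directly that some subsequence of $\{\Phi_\mu(Q_l)\}$ tends to $+\infty$, which is precisely the assertion that the sequence is not bounded from above. Recall that $\mu$ is a probability measure and that the support function of the cross-polytope is $h_{Q_l}(v)=\max_{1\le i\le n+1}h_{i,l}\,|\langle v,u_{i,l}\rangle|$, so $\log h_{Q_l}(v)=\max_i\big(\log h_{i,l}+\log|\langle v,u_{i,l}\rangle|\big)$. First, by compactness of the orthogonal group I would pass to a subsequence with $u_{i,l}\to u_i$, an orthonormal basis, and set $\xi_m=\mathrm{span}\{u_1,\dots,u_m\}$, $\beta_m=\mu(\xi_m\cap\mathbb{S}^n)$, $\nu_m=\beta_m-\beta_{m-1}\ge 0$ (with $\beta_0=0$, $\beta_{n+1}=1$). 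The strict subspace concentration inequality gives $\beta_m<\tfrac{m}{n+1}$ for $1\le m\le n$, hence the numbers $c_j:=\tfrac{j-1}{n+1}-\beta_{j-1}$ satisfy $c_j>0$ for $2\le j\le n+1$; put $c_*:=\min_{2\le j\le n+1}c_j>0$. The whole argument is engineered so that this $c_*$, which encodes the \emph{strictness} of the hypothesis, reappears as the positive coefficient in front of the divergent quantity.

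I would then produce two lower bounds for $\Phi_\mu(Q_l)$. Bound (i), valid for every $l$: since $\sum_i|\langle v,u_{i,l}\rangle|^2=1$, some index has $|\langle v,u_{i,l}\rangle|\ge(n+1)^{-1/2}$, whence $\log h_{Q_l}(v)\ge\log h_{1,l}-\tfrac12\log(n+1)$, and integrating, $\Phi_\mu(Q_l)\ge\log h_{1,l}-\tfrac12\log(n+1)$; this handles the easy regime $h_{1,l}\to\infty$. Bound (ii), the delicate one: fix $\varepsilon\in(0,c_*/2)$; using $\mu\{v:|\langle v,u_i\rangle|\le s,\ i>m\}\downarrow\beta_m$ as $s\downarrow0$, pick a small $s$ working for all $m\le n$, then $\delta\in(0,\min\{s/2,(n+1)^{-1/2}\})$, then $L$ with $\|u_{i,l}-u_i\|<s/2$ for $l\ge L$. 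For $l\ge L$, partition $\mathbb{S}^n$ via $k(v)=\max\{i:|\langle v,u_{i,l}\rangle|\ge\delta\}$ into $V_1^l,\dots,V_{n+1}^l$; the union $V_1^l\cup\dots\cup V_m^l=\{v:|\langle v,u_{i,l}\rangle|<\delta\ \forall i>m\}$ lies in a small neighbourhood of $\xi_m\cap\mathbb{S}^n$, so $\mu(V_1^l\cup\dots\cup V_m^l)<\beta_m+\varepsilon$. On $V_k^l$ one has $\log h_{Q_l}\ge\log h_{k,l}+\log\delta$, so $\Phi_\mu(Q_l)\ge\log\delta+\sum_k(\log h_{k,l})\mu(V_k^l)$. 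Summation by parts turns the last sum into $\log h_{n+1,l}-\sum_{k=1}^n(\log h_{k+1,l}-\log h_{k,l})\,\mu(V_1^l\cup\dots\cup V_k^l)$; estimating $\mu(V_1^l\cup\dots\cup V_k^l)<\beta_k+\varepsilon$ (and using $\log h_{k+1,l}-\log h_{k,l}\ge0$), then re-summing by parts and invoking both $\sum_j\log h_{j,l}\ge0$ (from $\prod_j h_{j,l}\ge1$) and the identity
\[
\sum_{j=1}^{n+1}\nu_j\log h_{j,l}-\tfrac1{n+1}\sum_{j=1}^{n+1}\log h_{j,l}=\sum_{j=2}^{n+1}c_j(\log h_{j,l}-\log h_{j-1,l})\ \ge\ c_*\,(\log h_{n+1,l}-\log h_{1,l}),
\]
yields $\Phi_\mu(Q_l)\ge\log\delta+(c_*-\varepsilon)(\log h_{n+1,l}-\log h_{1,l})\ge\log\delta+\tfrac{c_*}{2}(\log h_{n+1,l}-\log h_{1,l})$.

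To finish, I would pass to a further subsequence along which $\log h_{1,l}$ converges in $[-\infty,+\infty]$. If the limit is $+\infty$, bound (i) forces $\Phi_\mu(Q_l)\to+\infty$; if the limit is finite or $-\infty$, then $\log h_{n+1,l}-\log h_{1,l}\to+\infty$ (since $\log h_{n+1,l}\to+\infty$) and bound (ii) forces $\Phi_\mu(Q_l)\to+\infty$. In either case a subsequence of $\{\Phi_\mu(Q_l)\}$ diverges to $+\infty$, proving the claim.

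The hard part will be exactly bound (ii). Because $h_{n+1,l}\to\infty$ while $\prod_j h_{j,l}\ge1$, the smallest radii $h_{1,l},h_{2,l},\dots$ can be forced to $0$, and if $\mu$ charges those "short" coordinate directions, the crude estimate $\log h_{Q_l}\gtrsim\log h_{1,l}$ degenerates to $-\infty$. The flag decomposition together with the double summation by parts is what converts the \emph{strict} subspace concentration inequality into the genuine gain $c_*>0$; the secondary technical subtlety is the order of choices — fix $\varepsilon<c_*/2$ first, and only afterwards $s$, $\delta$, $L$ — so that the approximation error $\varepsilon(\log h_{n+1,l}-\log h_{1,l})$ arising from replacing $u_{i,l}$ by its limit stays dominated.
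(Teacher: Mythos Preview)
The paper does not prove this lemma at all; it is quoted verbatim as Lemma~6.2 of B\"or\"oczky--Lutwak--Yang--Zhang (\emph{JAMS} 2013) and used as a black box. Your argument is correct and is in fact the BLYZ proof, slightly repackaged: you pass to a subsequence so the moving frames $u_{i,l}$ converge, build the flag $\xi_m$, partition $\mathbb{S}^n$ by the index of the largest coordinate exceeding a threshold $\delta$, and then run two Abel summations so that the strict subspace concentration inequality emerges as the positive coefficient $c_*$ in front of $\log h_{n+1,l}-\log h_{1,l}$. The order-of-choices remark (fix $\varepsilon<c_*/2$, then $s$, then $\delta$, then $L$) is exactly right, and the dichotomy on $\lim\log h_{1,l}$ at the end is a clean way to merge the trivial regime (bound~(i)) with the delicate one (bound~(ii)). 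So there is nothing to compare against in the present paper, and nothing to fix in your write-up.
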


	\begin{proof}[Proof of Theorem \ref{p0}]
		By Lemma \ref{p32}, it suffices to show that a minimizer of the optimization problem  exists. Without loss of generality, assume that $\mu$ is a probability measure. Take $\{Q_i\}$ is a sequence of $O$-symmetric convex bodies such that $\gamma_{n+1}(Q_i)=\frac{1}{2}$ and
		\begin{equation}\label{p34}
			\lim_{i\rightarrow\infty}\phi(Q_i)=\min \left\{\phi(Q):Q\in \mathcal{K}_e,\gamma_{n+1}(Q)=\frac{1}{2}\right\}.
		\end{equation}
		By John's theorem \cite{1937John}, there exists an ellipsoid $E_i$ centered at the origin such that
		\[E_i\subset Q_i\subset \sqrt{n+1}E_i.\]
		Let $e_{1,i},\cdot\cdot\cdot,e_{n+1,i}\in\mathbb{S}^n$ be the principal directions of $E_i$  indexed to satisfy
		\[h_{1,i}\leq\cdot\cdot\cdot\leq h_{n+1,i},\quad \text{where}\quad h_{j,i}=h_{E_i}(e_{j,i}),\quad\text{for}\quad j=1,...,n+1.\]
		Next we define the cross-polytope
		\[C_i=[\pm h_{1,i}e_{1,i},...,\pm h_{n+1,i}e_{n+1,i}].\]
		Since $C_i\subset E_i\subset \sqrt{n+1}C_i$, we have
		\[C_i\subset Q_i\subset (n+1)C_i.\]
		So $\gamma_{n+1}((n+1)C_i)\geq\gamma_{n+1}(Q_i)$. 
		By the definition of $\gamma_{n+1}$, and $\gamma_{n+1}(Q_i)=\frac{1}{2}$, we have
		\begin{equation}\label{p35}
			\prod_{j=1}^{n+1}h_{j,i}\geq\frac{(\sqrt{2\pi})^{n+1}}{2^{n+2}(n+1)^{n+1}}.
		\end{equation}
		Set $A=\frac{(\sqrt{2\pi})^{n+1}}{2^{n+2}(n+1)^{n+1}}$.\\
		\indent Suppose that the sequence ${Q_i}$ is not  bounded. Then ${C_i}$ is not  bounded, and thus, for a subsequence,
		\begin{equation}\label{p36}
			\lim_{i\rightarrow\infty}h_{n+1,i}=\infty.
		\end{equation}
		In view of (\ref{p35}) and (\ref{p36}), applying Lemma \ref{lem6.2} to $C_i^{'}=A^{\frac{-1}{n+1}}C_i$ yields that
		${\phi(C_i^{'})}$ is not bounded from above. Thus
		\[\lim_{i\rightarrow\infty}\phi(Q_i)=\infty,\]
		but this contradicts to (\ref{p34}). Then we conclude $Q_i$ is uniformly bounded. By Blaschke selection theorem, $Q_i$ has convergent subsequence, still denoted by $Q_i$, converges to a compact convex $O$-symmetric set $K$ of $\mathbb{R}^{n+1}$. By the continuity of Gaussian volume, we get
		\[\lim_{i\rightarrow\infty}\gamma_{n+1}(Q_i)=\gamma_{n+1}(K)=\frac{1}{2}.\]
		Next we prove that the support function of $K$ has uniform lower bound by contradiction. Assume that there exists a sequence of cross-polytopes $K_l=[\pm h_{1,l}u_{1,l},\cdot\cdot\cdot,\pm h_{n+1,l}u_{n+1,l}]$, with $\gamma_{n+1}(K_l)=\frac{1}{2}$ and converging to $K$, where
		$u_{1,l},\cdot\cdot\cdot,u_{n+1,l}$ is an orthonormal
		basis of $\mathbb{R}^{n+1}$,  $0<h_{1,l}\leq\cdots\leq h_{n+1,l}$ and $h_{1,l}\rightarrow 0$ as $l\rightarrow \infty$. 
		Therefore,
		\[
		\gamma_{n+1}(K_l)=\frac{1}{(\sqrt{2\pi})^{n+1}}\Pi_{i=1}^{n+1}\left\{\int_{-h_{i, l}}^{h_{i, l}}e^{\frac{-x_i^2}{2}}dx_i\right\}
		\rightarrow 0, \, \, \text{as}\, \,  l\rightarrow \infty
		\]
		which contradicts with the given condition that $\gamma_{n+1}(K_l)=\frac{1}{2}$.
		We therefore conclude that $K$ is non-degenerate. Namely, $K$ is the desired convex body.
	\end{proof}

	\section{Proof of Theorem \ref{yibanfangchengthm} $\&$ \ref{yibanfangchengthm2}}\label{proofofthm2}
	Firstly, we establish the uniform positive upper and lower bounds for the solutions to the flow \eqref{yibanfangcheng2}.
	\begin{lemma}\label{C0estimateC}
		Let $h(\cdot,t),t\in[0,T)$, be a smooth, uniformly convex solution to (\ref{8}). 
		For $p>0$, then there is a positive constant $C$ depending only $p$, and the lower and upper bounds of f and $h(\cdot,0)$ such that
		\begin{equation}\label{c1}
			1/C\leq h(\cdot,t)\leq C, \quad    \forall t \in [0,T).
		\end{equation}
		and
		\begin{equation}\label{gra1}
			\max_{\mathbb{S}^n}|\nabla h|(\cdot,t)\leq C, \quad    \forall t \in [0,T).
		\end{equation}
	\end{lemma}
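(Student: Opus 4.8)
The plan is to prove the bounds in the order: first the uniform upper bound $h(\cdot,t)\le C$, then the gradient bound \eqref{gra1} as an immediate consequence, and finally the uniform positive lower bound in \eqref{c1}. I use throughout that the origin stays in the interior of the convex body $\Omega_t$ enclosed by $M_t$ (so that the radial function is defined and Lemma~\ref{gamma-invariant} applies) and that $\gamma_{n+1}(\Omega_0)\ge\frac12$, as in the hypotheses of the theorems this lemma serves.

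For the upper bound, by Lemma~\ref{2.2} the functional $\Psi(M_t)=\frac1p\int_{\mathbb{S}^n}fh^p\,dx$ is non-increasing along \eqref{yibanfangcheng2} (here $p>0$), so $\int_{\mathbb{S}^n}fh^p(\cdot,t)\,dx\le\int_{\mathbb{S}^n}fh_0^p\,dx=:C_1$ for all $t\in[0,T)$. Fix $t$ and let $x_0$ be a maximum point of $h(\cdot,t)$, with $M:=\max_{\mathbb{S}^n}h(\cdot,t)=h(x_0,t)$. Since $\nabla h(x_0,t)=0$, \eqref{2} shows the boundary point of $M_t$ with outer normal $x_0$ is $Mx_0$, so $Mx_0\in\Omega_t$ and hence $h(x,t)\ge\langle x,Mx_0\rangle$ for all $x$. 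Therefore
\[
C_1\ge\int_{\mathbb{S}^n}fh^p(\cdot,t)\,dx\ge\big(\min_{\mathbb{S}^n}f\big)M^p\int_{\mathbb{S}^n}(\langle x,x_0\rangle)_+^p\,dx=c_{n,p}\big(\min_{\mathbb{S}^n}f\big)M^p,
\]
with $c_{n,p}>0$ independent of $x_0$, giving $M\le C$. The gradient bound is then immediate: for each $\xi$, $r(\xi)\xi\in\Omega_t$ forces $r(\xi)=\langle\xi,r(\xi)\xi\rangle\le h(\xi)\le C$, and \eqref{3} gives $|\nabla h(x,t)|^2=r(\xi_x)^2-h(x,t)^2\le(\max_{\mathbb{S}^n}r(\cdot,t))^2\le C^2$, where $\xi_x=X(x,t)/|X(x,t)|$.

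For the lower bound I would argue geometrically rather than via the maximum principle, since the ODE for $\min h$ does not close up when $p<n$. By Lemma~\ref{gamma-invariant}, $\gamma_{n+1}(\Omega_t)=\gamma_{n+1}(\Omega_0)=:\gamma_0\ge\frac12$ for all $t$, and by the upper bound $\Omega_t\subset B_R$ with $R=C$. If $\varepsilon:=\min_{\mathbb{S}^n}h(\cdot,t)=h(x_0,t)$, then $\Omega_t\subset\{y:\langle y,x_0\rangle\le\varepsilon\}\cap B_R$, so
\[
\gamma_0=\gamma_{n+1}(\Omega_t)\le\gamma_{n+1}\big(\{y:\langle y,x_0\rangle\le\varepsilon\}\cap B_R\big)=:\psi(\varepsilon).
\]
Now $\psi$ is continuous and strictly increasing in $\varepsilon\ge0$, with $\psi(0)=\frac12\gamma_{n+1}(B_R)<\frac12\le\gamma_0$ by reflecting across the hyperplane $x_0^\perp$ (an isometry fixing $B_R$ and preserving $\gamma_{n+1}$) and using $\gamma_{n+1}(B_R)<1$ for $R<\infty$, and $\lim_{\varepsilon\to\infty}\psi(\varepsilon)=\gamma_{n+1}(B_R)\ge\gamma_0$. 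Hence there is $\varepsilon_0=\varepsilon_0(R,\gamma_0)>0$ with $\psi(\varepsilon_0)=\gamma_0$, and the display forces $\varepsilon\ge\varepsilon_0$, i.e. $h(\cdot,t)\ge\varepsilon_0$; since $R$ and $\gamma_0$ depend only on $p$ and on the bounds of $f$ and of $h(\cdot,0)$, this is \eqref{c1}.

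The step I expect to be the main difficulty is making the lower-bound constant genuinely independent of $t$: this forces the order above, since one needs the $t$-uniform upper bound to fix the ball $B_R$ and then Lemma~\ref{gamma-invariant}, whose standing hypothesis that the origin remains interior to $\Omega_t$ should be recorded separately (it holds on any finite time interval by positivity of solutions to the parabolic Monge-Amp\`ere equation \eqref{8}). A minor point is that $c_{n,p}$ degenerates as $p\to0^+$, which is harmless for fixed $p>0$.
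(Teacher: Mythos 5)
Your proof is correct and follows essentially the same three-step strategy as the paper: monotonicity of $\Psi$ plus the cone containment $h(x,t)\ge\langle x,x^t_{\max}\rangle h_{\max}(t)$ for the upper bound, the identity $r^2=h^2+|\nabla h|^2$ for the gradient bound, and Gaussian-volume invariance under the flow together with the geometric observation that a body pinched near a hyperplane through the origin has Gaussian measure $<\tfrac12$ for the lower bound. The only substantive difference is in the presentation of the lower bound: the paper argues by contradiction, extracting a sequence $t_i$ with $h(v_i,t_i)\to0$ and passing to limiting slabs $D_\epsilon\to D$ to conclude $\gamma_{n+1}(\Omega_{t_i})<\tfrac12$, whereas you make the same mechanism quantitative by introducing the strictly increasing function $\psi(\varepsilon)=\gamma_{n+1}(\{\langle y,x_0\rangle\le\varepsilon\}\cap B_R)$ with $\psi(0)=\tfrac12\gamma_{n+1}(B_R)<\tfrac12$ and reading off a uniform $\varepsilon_0>0$; this is cleaner and produces an explicit constant, at no extra cost. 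You are also right to flag that the lemma as stated suppresses the standing hypothesis $\gamma_{n+1}(\Omega_0)\ge\tfrac12$ (inherited from Theorem~\ref{yibanfangchengthm}), without which neither proof of the lower bound closes.
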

	
	\begin{proof}
		
		We first prove the upper bound:
		From Lemma 2.6 in \cite{2021ChenLi}, at any fixed time $t$, we have $h(x, t)\geq (x\cdot x^t_{\max})h_{\max}(t),  \forall x \in \mathbb{S}^n$, where $x^t_{\max}$ is the point such that $h(x^t_{\max})=\max_{\mathbb{S}^n}h(\cdot,t)$ and set $h_{\max}(t)=\max_{\mathbb{S}^n}h(\cdot,t)$.
		
		By Lemma \ref{2.2}, we have for the case $p>0$
		\begin{align*}
			\Psi(M_0)&\geq\Psi(M_t)=\frac{1}{p}\int_{\mathbb{S}^n}f(x)h^p(x,t)dx\nonumber\\
			&\geq\frac{1}{p}\int_{\{x\in\mathbb{S}^n:x\cdot x^t_{\max}\geq1/2\}}f(x)h^p(x,t)dx\nonumber\\
			&\geq\frac{1}{p2^p}\int_{\{x\in\mathbb{S}^n:x\cdot x^t_{\max}\geq1/2\}}f(x)h^p_{\max}(t)dx\nonumber\\
			&\geq Ch^p_{\max}(t).
		\end{align*}
		Next we derive a positive lower bound:
		Suppose for a sequence of $\{t_i\}$ with $t_i\rightarrow T \le \infty$ as $i\rightarrow \infty$, there exists $v_i\in \mathbb{S}^n$ such that the support function of $M_{t_i}$, $h(v_i, t_i)\rightarrow 0$, as $i\rightarrow \infty$. Then for any $\epsilon>0$, there exists a large integer $N>0$, such that when $i>N$,  $\Omega_{t_i}\subset\{x\in \R^{n+1}: x\cdot v_i\le \epsilon\}:=H_{\epsilon}$, and $H_{\epsilon}\rightarrow H_0:=H$ as $\epsilon\rightarrow 0$, the half space of $\R^{n+1}$, where $\Omega_{t_i}$ denotes the convex body enclosed by $M_{t_i}$. Combining the fact that the support function of $M_{t_i}$ has upper bound, which means that there exists a large number $R>0$, $\Omega_{t_i}\subset B_R\cap H_{\epsilon}:=D_{\epsilon}$,  we have
		\begin{align*}
			\frac{1}{2}+\frac{\epsilon}{\sqrt{2\pi}}
			&\ge\frac{1}{(\sqrt{2\pi})^{n+1}}\int_H e^\frac{-|x|^2}{2}dx+
			\frac{1}{\sqrt{2\pi}}\int_0^{\epsilon} e^{-\frac{x_i^2}{2}}dx_i
			\frac{1}{(\sqrt{2\pi})^{n}}\int_{\R^n} e^\frac{-|\hat{x}|^2}{2}d\hat{x}\\
			&=\frac{1}{(\sqrt{2\pi})^{n+1}}\int_{H_{\epsilon}}e^\frac{-|x|^2}{2}dx\\
			&=\frac{1}{(\sqrt{2\pi})^{n+1}}\int_{D_{\epsilon}}e^\frac{-|x|^2}{2}dx+\frac{1}{(\sqrt{2\pi})^{n+1}}\int_{H_{\epsilon}\backslash {D_{\epsilon}}}e^\frac{-|x|^2}{2}dx,
		\end{align*}
		where $\hat{x}=(x_1, \cdots, x_{i-1}, x_{i+1}, \cdots, x_{n+1})\in \R^n$. Let $\epsilon\rightarrow 0$, we have $D_{\epsilon}\rightarrow D_0:=D$, and 
		\[
		\frac{1}{2}\ge \frac{1}{(\sqrt{2\pi})^{n+1}}\int_{D}e^\frac{-|x|^2}{2}dx+\frac{1}{(\sqrt{2\pi})^{n+1}}\int_{H\backslash {D}}e^\frac{-|x|^2}{2}dx,
		\]
		which shows that for some $\delta>0$, we have 
		$\frac{1}{(\sqrt{2\pi})^{n+1}}\int_D e^\frac{-|x|^2}{2}dx+\delta\le \frac{1}{2}$. Therefore  
		by Blaschke Selection Theorem and Lemma \ref{gamma-invariant}, we have  $\gamma_{n+1}(\Omega_{t_i})=\gamma_{n+1}(\Omega_{T})\leq\gamma_{n+1}(D)<\frac{1}{2}$, which contradicts the condition $\gamma_{n+1}(\Omega_{t_i})\geq\frac{1}{2}$.\\
		\indent By (\ref{2}), one infers that 
		\[	\max_{\mathbb{S}^n}|\nabla h|(\cdot,t)\leq2\max h(\cdot,x).\]
		
		\noindent Hence (\ref{gra1}) is a consequence of \eqref{c1}.
	\end{proof}
	
	\indent For $-n-1<p\leq0$, we consider the origin-symmetric hypersurfaces and obtain the following $L^{\infty}$-norm estimate.
	\begin{lemma}\label{C0c2}
		Let $M_t$, $t\in[0,T)$ be an origin-symmetric, uniformly convex solution to the flow (\ref{yibanfangcheng2}), and $h(\cdot,t)$ be its support function. If $-n-1<p\leq0$, $f$ is even function, then there is a positive constant $C$ depending only on $p$, and the lower and upper bounds of $f$ and the initial hypersurface $M_0$ such that
		\begin{equation}\label{c2}
			1/C\leq h(\cdot,t)\leq C, \quad    \forall t \in [0,T),
		\end{equation}
		and
		\begin{equation}\label{gra2}
			\max_{\mathbb{S}^n}|\nabla h|(\cdot,t)\leq C, \quad    \forall t \in [0,T).
		\end{equation}
	\end{lemma}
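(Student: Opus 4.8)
The plan is to prove the $C^0$ bound \eqref{c2} first --- a lower bound by collapsing convexity against the conserved Gaussian volume, and an upper bound by playing a ``spindle'' lower bound for $h$ against the monotone functional $\Psi$ --- and then to read off the gradient bound \eqref{gra2} from \eqref{c2} exactly as in Lemma \ref{C0estimateC}. The only inputs will be the origin-symmetry of $M_t$, the conservation law $\gamma_{n+1}(\Omega_t)=\gamma_{n+1}(\Omega_0)$ from Lemma \ref{gamma-invariant}, and the monotonicity $\Psi(M_t)\le\Psi(M_0)$ from Lemma \ref{2.2} (which holds for $p=0$ and $p\neq0$ alike).

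For the lower bound I would argue by contradiction. If $h(v_i,t_i)\to 0$ for some $v_i\in\mathbb{S}^n$ and $t_i\to T$, then by evenness $h(-v_i,t_i)=h(v_i,t_i)\to 0$ as well, so $\Omega_{t_i}\subset\{x:|x\cdot v_i|\le\varepsilon_i\}$ with $\varepsilon_i\to 0$, and hence by rotation invariance of the Gaussian measure
\[
\gamma_{n+1}(\Omega_{t_i})\le\frac{1}{\sqrt{2\pi}}\int_{-\varepsilon_i}^{\varepsilon_i}e^{-s^2/2}\,ds\longrightarrow 0,
\]
contradicting $\gamma_{n+1}(\Omega_{t_i})=\gamma_{n+1}(\Omega_0)>0$. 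So $h(\cdot,t)\ge 1/C$, and therefore $B_{1/C}\subset\Omega_t$ for every $t$. For the upper bound, suppose $h_{\max}(t_i):=\max_{\mathbb{S}^n}h(\cdot,t_i)\to\infty$, attained at $x_i\in\mathbb{S}^n$ with contact point $y_i\in\partial\Omega_{t_i}$, so $|y_i|\ge y_i\cdot x_i=h_{\max}(t_i)$. By evenness $\pm y_i\in\Omega_{t_i}$, hence $\Omega_{t_i}\supset\operatorname{conv}(B_{1/C}\cup\{\pm y_i\})$ and consequently
\[
h(x,t_i)\ \ge\ \max\!\Bigl(\tfrac1C,\ h_{\max}(t_i)\,|x\cdot v_i|\Bigr),\qquad v_i=y_i/|y_i|,\ x\in\mathbb{S}^n.
\]
When $p<0$, raising this to the power $p$ and integrating --- splitting $\mathbb{S}^n$ at $|x\cdot v_i|=(Ch_{\max}(t_i))^{-1}$, where on the thin band $h^p\le C^{-p}$ and the band has measure $O(h_{\max}(t_i)^{-1})$, while off the band $h^p\le h_{\max}(t_i)^p|x\cdot v_i|^p$ and $\int_{\{|x\cdot v_i|>\delta\}}|x\cdot v_i|^p\,dx$ is $O(1)$, $O(\log\tfrac1\delta)$, or $O(\delta^{p+1})$ according as $p>-1$, $p=-1$, or $p<-1$ --- gives $\int_{\mathbb{S}^n}f\,h^p(\cdot,t_i)\,dx\to 0$, hence $\Psi(M_{t_i})=\tfrac1p\int_{\mathbb{S}^n}f\,h^p(\cdot,t_i)\,dx\to 0>\Psi(M_0)$, contradicting Lemma \ref{2.2}. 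When $p=0$, I would instead use the convexity bound $h(x,t_i)\ge(x\cdot x_i)h_{\max}(t_i)$ (Lemma 2.6 of \cite{2021ChenLi}): on the cap $\{x\cdot x_i\ge\tfrac12\}$ one has $\log h(\cdot,t_i)\ge\log h_{\max}(t_i)-\log 2$, and off the cap $\log h(\cdot,t_i)\ge-\log C$ by the lower bound just proved, so with $c_0>0$ depending only on $\min f$ and $n$,
\[
\Psi(M_{t_i})=\int_{\mathbb{S}^n}f\log h(\cdot,t_i)\,dx\ \ge\ c_0\log h_{\max}(t_i)-C,
\]
which together with $\Psi(M_{t_i})\le\Psi(M_0)$ bounds $h_{\max}(t_i)$ --- a contradiction. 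Thus $h(\cdot,t)\le C$. Finally, $h\le C$ forces $\Omega_t\subset B_C$, so every radial point has norm $\le C$, whence $r=\sqrt{h^2+|\nabla h|^2}\le C$ by \eqref{3} and therefore $|\nabla h|\le C$, giving \eqref{gra2} as at the end of the proof of Lemma \ref{C0estimateC}.

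The main obstacle is the upper bound for $p<0$: because $\Psi$ is a weighted $L^p$-functional with a \emph{negative} exponent, it does not by itself control $h_{\max}$, so one must first exploit origin-symmetry together with the already-proven lower bound to trap $\Omega_{t_i}$ between a fixed ball and a long symmetric segment, and then verify that the resulting spindle lower bound on $h$ drives $\int f h^p$ to zero; the borderline range $p\le -1$ requires the explicit $O(\delta^{p+1})$ estimate above, and at that point one should double-check that the assumption $-n-1<p$ (which is used more essentially in the subsequent $C^2$ and higher-order estimates) causes no trouble. The lower bound, by contrast, is immediate from symmetry once one observes that a collapsing origin-symmetric convex body has vanishing Gaussian volume, whereas $\gamma_{n+1}(\Omega_t)$ is conserved and positive.
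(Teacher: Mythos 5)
Your proof is correct, and it takes a genuinely different route from the paper's for the key upper bound when $p<0$. The paper first establishes the \emph{upper} bound directly from the monotonicity $\int_{\mathbb{S}^n}h^p\,dx\ge C>0$ by a geometric argument involving the origin-symmetric John ellipsoid $E_{t_j}$, a three-region decomposition of $\mathbb{S}^n$ according to the size of $h_{E_j}$, the Blaschke--Santal\'o inequality to bound $\mathrm{Vol}(E_{t_j}^*)$, and H\"older's inequality with exponent $\frac{p+n+1}{n+1}>0$ (so the restriction $p>-n-1$ is used at precisely this point); only afterwards does it invoke Gaussian-volume conservation to get the lower bound, ``by the same argument as Lemma~\ref{C0estimateC}''. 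You reverse this order: you prove the lower bound first by the elementary slab observation that a collapsing origin-symmetric body has Gaussian volume going to $0$ (which is cleaner than the half-space argument of Lemma~\ref{C0estimateC} and does not rely on the hypothesis $\gamma_{n+1}(\Omega_0)\ge\tfrac12$), and then use $h\ge 1/C$ together with the spindle bound $h\ge\max(1/C,\,h_{\max}|x\cdot v_i|)$ to split $\int f h^p$ at $|x\cdot v_i|\sim h_{\max}^{-1}$, where an elementary computation of $\int_{\{|x\cdot v|>\delta\}}|x\cdot v|^p$ in the three regimes $p>-1$, $p=-1$, $p<-1$ shows $\int f h^p\to 0$, contradicting $\Psi(M_t)\le\Psi(M_0)<0$. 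This avoids John's theorem, Blaschke--Santal\'o, and H\"older altogether, and --- as you correctly note --- the upper-bound estimate works for every $p\le 0$, so the restriction $-n-1<p$ enters only in the later $C^2$/higher-order estimates (and in the paper's choice of proof). Your $p=0$ argument is essentially the same spirit as the paper's, differing only in that you use the already-obtained lower bound off the cap $\{x\cdot x_i\ge\tfrac12\}$ where the paper uses the finiteness of $\int f\log|x\cdot x_{\max}^t|$; both are fine. The gradient bound at the end is identical to the paper's.
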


	\begin{proof}
		We first prove the upper bound: at any fixed time $t$, we have $h(x, t)\geq |x\cdot x^t_{\max}|h_{\max}(t),  \forall x \in \mathbb{S}^n$, where $x^t_{\max}$ is the point such that $h(x^t_{\max})=\max_{\mathbb{S}^n}h(\cdot,t)$ and set $h_{\max}(t)=\max_{\mathbb{S}^n}h(\cdot,t)$ (c.f.\cite{2021ChenLi}).
		
		When $p=0$, by a similar argument, we have
		\begin{align*}
			\Psi(M_0)&\geq\Psi(M_t)=\int_{\mathbb{S}^n}f(x)\log h(x,t)dx\nonumber\\
			&\geq\log h_{\max}(t)\int_{\mathbb{S}^n}f(x)dx+\int_{\mathbb{S}^n}f(x)\log |x\cdot x^t_{\max}|dx\nonumber\\
			&\geq C\log h_{\max}(t)-Cf_{\max}.
		\end{align*}
		where $C$ is a positive constant depending only on $f, p$ and $\Omega_0$.
		
		When  $-n-1<p<0$, since $\frac{d}{dt}\Psi(M_t)\leq 0$, there exisit a contant C such that
		\begin{equation}\label{c3}
			C\leq\int_{\mathbb{S}^n}h^p dx.
		\end{equation}
		Suppose there is a sequence origin-symmetric convex body 
		$\Omega_{t_j}$ satisfying (\ref{c3}), but the diameter of $\Omega_{t_j}, d_j=2h_{\max}(t_j)\rightarrow\infty$ as $t_j\rightarrow T$. Let $E_{t_j}$ be the origin-symmetric John ellipsoid associated with $\Omega_{t_j}$, see \cite{2014Schneider}, $\frac{E_{t_j}}{n+1}\subset\Omega_{t_j}\subset E_{t_j},  \frac{h_{E_j}}{n+1}<h_j<h_{E_j}$. we set $\mathbb{S}^n=\mathbb{S}^j_1\bigcup\mathbb{S}^j_2\bigcup\mathbb{S}^j_3$, where
		\[\mathbb{S}^j_1=\mathbb{S}^n\bigcap\{h_{E_j}<\delta\}, \quad \mathbb{S}^j_2=\mathbb{S}^n\bigcap\{\delta\leq h_{E_j}<\frac{1}{\delta}\},  \quad \mathbb{S}^j_3=\mathbb{S}^n\bigcap\{ h_{E_j}\geq\frac{1}{\delta}\}.\]		
		where $\delta \in (0,\frac{1}{4})$. Then
		\[C\leq\int_{\mathbb{S}^n}h_j^p dx<\int_{\mathbb{S}^n}(\frac{h_{E_j}}{n+1})^p dx,\]
		since $p<0$.
			
		Suppose $h_j$ attains the maximum at $x_0$, where $x_0\in\mathbb{S}^n$, that is, $h_j(x_0)=\max_{\mathbb{S}^n}h_j$. Since $h_j(y)\geq\frac{1}{2}d_j|x_0\cdot y|$ for any $y\in\mathbb{S}^n$, we obtain $|\mathbb{S}^j_1|,|\mathbb{S}^j_2|\rightarrow0$ as $d_j\rightarrow\infty$.
		
		 Since the Gaussian volume unchanged along the flow (\ref{yibanfangcheng2}), then $\gamma_{n+1}(\Omega_{t_j})\geq\frac{1}{2}$. By the defination of $\gamma_{n+1}(K)$, we can get  there exist a positive constant C such that Vol$(\Omega_{t_j})>1/C$. We shall use the Blaschke-Santal\'{o} inequality 
		\[ \text{Vol}(\Omega)\text{Vol}(\Omega^{*})\leq \text{Vol}(B_1)^2,\]
		where $\Omega$ is the $O$-symmetry convex body enclosing the origin, $\Omega^{*}$ is the polar body of $\Omega$. So we can get
		\begin{equation}\label{Bl}
			\text{Vol}(E^{*}_{t_j})\leq\frac{\text{Vol}(B_1)^2}{\text{Vol}(E_{t_j})}\leq\frac{\text{Vol}(B_1)^2}{\text{Vol}(\Omega_{t_j})}\leq C.
		\end{equation}
		
		As $d_j\rightarrow\infty$, for any fixed $\delta$, by the H\"{o}lder inequality and (\ref{Bl}), we have
		\[\int_{\mathbb{S}^j_1}(\frac{h_{E_j}}{n+1})^p dx\leq(\frac{1}{n+1})^p(\int_{\mathbb{S}^n}\frac{1}{h_{E_j}^{n+1}})^{\frac{-p}{n+1}}|\mathbb{S}^j_1|^{\frac{p+n+1}{n+1}}\leq C|\mathbb{S}^j_1|^{\frac{p+n+1}{n+1}}\rightarrow0.\]
		Noting $|\mathbb{S}^j_2|\rightarrow0$ as $d_j\rightarrow\infty$, and 
		\[\int_{\mathbb{S}^j_3}(\frac{h_{E_j}}{n+1})^p dx\leq\int_{\mathbb{S}^j_3}\big(\frac{1}{(n+1)\delta}\big)^p dx =\big(\frac{1}{(n+1)\delta}\big)^p|\mathbb{S}^j_3|\leq C\delta^{-p}.\]		
		Hence, we have
		\[C\leq o(1)+C\delta^{-p},\]		
		for any $\delta\in(0,\frac{1}{4})$. Let $\delta\rightarrow0$, we reach a contradiction. It implies $\max_{\mathbb{S}^n}h(\cdot,t)\leq C$, for some positive constant $C$.\\
		\indent A positive lower bound and (\ref{gra2}) can be obtained by a same argument as Lemma \ref{C0estimateC}.
		
	\end{proof}
	
	\begin{lemma}\label{C3}
		Let $X(\cdot,t), t\in[0,T)$ be a uniformly convex solution to (\ref{yibanfangcheng2}). Let $h$ and $r$ be its support function and radial function. Then
		\begin{equation}\label{C31}
			\min_{\mathbb{S}^n\times[0,T)}h\leq r(\cdot,t)\leq \max_{\mathbb{S}^n\times[0,T)}h, \quad    \forall t \in [0,T),
		\end{equation}
		and
		\begin{equation}\label{C32}
			|\nabla r(\cdot,t)|\leq C, \quad    \forall t \in [0,T),
		\end{equation}
		where $C>0$ depends only on $\min_{\mathbb{S}^n\times[0,T)}h$ and $\max_{\mathbb{S}^n\times[0,T)}h$.
	\end{lemma}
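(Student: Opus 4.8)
The plan is to obtain both bounds directly from elementary convex geometry, using the pointwise identities between $r$, $h$ and the Gauss map recorded in Section \ref{sec2} together with the $C^0$ control on the support function established in Lemmas \ref{C0estimateC} and \ref{C0c2}. Throughout I would write $c_0=\min_{\mathbb{S}^n\times[0,T)}h>0$ and $C_1=\max_{\mathbb{S}^n\times[0,T)}h<\infty$.

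For the upper bound in (\ref{C31}) I would argue that $r(\xi,t)\xi\in\Omega_t$, so by the definition (\ref{1}) of the support function $r(\xi,t)=\langle r(\xi,t)\xi,\xi\rangle\le h(\xi,t)\le C_1$. For the lower bound, let $x=\nu(\xi,t)$ be the outer unit normal of $M_t$ at the boundary point $r(\xi,t)\xi$. Then $r(\xi,t)\langle\xi,x\rangle=\langle r(\xi,t)\xi,x\rangle=h(x,t)$, and since the origin lies in the interior of $\Omega_t$ we have $h(x,t)>0$, forcing $\langle\xi,x\rangle\in(0,1]$; hence $r(\xi,t)=h(x,t)/\langle\xi,x\rangle\ge h(x,t)\ge c_0$. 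This proves (\ref{C31}).

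For (\ref{C32}) I would use identity (\ref{11}): with $h$ evaluated in the normal direction $x=\nu(\xi,t)$,
\[
v=\frac{r}{h}=\sqrt{1+|\nabla\log r|^2},
\]
whence $|\nabla r|^2=r^2(v^2-1)=\dfrac{r^2(r^2-h^2)}{h^2}\le\dfrac{r^4}{h^2}$. Since $h(x,t)\in[c_0,C_1]$ and, by the previous step, $r\le C_1$, this yields $|\nabla r(\cdot,t)|\le C_1^2/c_0$, which depends only on $\min_{\mathbb{S}^n\times[0,T)}h$ and $\max_{\mathbb{S}^n\times[0,T)}h$, as claimed.

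I do not expect a genuine obstacle here: the argument is a short manipulation of the relations collected in Section \ref{sec2}. The points demanding care are the bookkeeping of the argument at which the support function is evaluated --- at $\xi$ itself for the upper $C^0$ bound, but at the associated normal direction $\nu(\xi,t)$ for the lower bound and in (\ref{11}) --- and the use of the origin being enclosed to ensure $\langle\xi,\nu(\xi,t)\rangle>0$, so that the division $h(x,t)/\langle\xi,x\rangle$ is legitimate.
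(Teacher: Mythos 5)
Your proposal is correct and takes essentially the same route as the paper: the $C^0$ bound comes from the elementary identity $\max_{\mathbb{S}^n} r(\cdot,t)=\max_{\mathbb{S}^n} h(\cdot,t)$ and $\min_{\mathbb{S}^n} r(\cdot,t)=\min_{\mathbb{S}^n} h(\cdot,t)$ (which you unpack via $r(\xi)\xi\in\Omega_t$ and $r(\xi)\langle\xi,\nu\rangle=h(\nu)$), and the gradient bound comes from (\ref{11}), giving $|\nabla r|\le r^2/h$. The paper's proof is just a terser statement of exactly these two steps.
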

	
	\begin{proof}
		Estimate (\ref{C31}) follow from $\max_{\mathbb{S}^n}h(\cdot,t)=\max_{\mathbb{S}^n}r(\cdot,t)$ and $\min_{\mathbb{S}^n}h(\cdot,t)=\min_{\mathbb{S}^n}r(\cdot,t)$. Estimate (\ref{C32}) follows from (\ref{C31}) and (\ref{11}) that we have $|\nabla r(\cdot,t)|\leq \frac{r^2}{h}$.
	\end{proof}

	From Lemmas \ref{C0estimateC}-\ref{C3}, we can get the following estimate about $\theta(t)$.
	\begin{lemma}\label{C00}
		Let $p>-n-1$, there exist a positive constant $C$	independent of $t$, such that for every $t\in[0,T)$,
		\[1/C\leq\theta(t)\leq C.\]
	\end{lemma}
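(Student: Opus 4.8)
The plan is to use the definition
\[
\theta(t)=\frac{\int_{\mathbb{S}^n}e^{-\frac{r^2}{2}}r^{n+1}(\xi,t)\,d\xi}{\int_{\mathbb{S}^n}h^p(x,t)f(x)\,dx}
\]
and estimate the numerator and denominator separately, invoking the uniform two-sided bounds on $h$, $r$, and $|\nabla h|$ already established in Lemmas \ref{C0estimateC}, \ref{C0c2}, and \ref{C3}. Note that these $C^0$ and gradient estimates hold for all $p>-n-1$: for $p>0$ they come from Lemma \ref{C0estimateC}, and for $-n-1<p\le 0$ (with $f$ even and $M_0$ origin-symmetric) from Lemma \ref{C0c2}; combined with Lemma \ref{C3} this gives a constant $C_0>1$, independent of $t$, with $1/C_0\le h(\cdot,t)\le C_0$ and $1/C_0\le r(\cdot,t)\le C_0$ on $\mathbb{S}^n\times[0,T)$.

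First I would bound the numerator. Since $1/C_0\le r(\xi,t)\le C_0$, the integrand $e^{-r^2/2}r^{n+1}$ lies between two positive constants depending only on $C_0$ and $n$, so integrating over $\mathbb{S}^n$ (which has finite measure $\omega_n=|\mathbb{S}^n|$) gives
\[
\frac{\omega_n}{C_1}\le \int_{\mathbb{S}^n}e^{-\frac{r^2}{2}}r^{n+1}(\xi,t)\,d\xi\le C_1\omega_n
\]
for some $C_1=C_1(C_0,n)>0$. Next I would bound the denominator: using $1/C_0\le h\le C_0$ and the given positive two-sided bounds $0<\min_{\mathbb{S}^n}f\le f\le \max_{\mathbb{S}^n}f$, the integrand $h^p f$ is pinched between $(\min f)\min\{C_0^p,C_0^{-p}\}$ and $(\max f)\max\{C_0^p,C_0^{-p}\}$ (the $\min$/$\max$ over the two powers handling both signs of $p$), so again
\[
\frac{1}{C_2}\le \int_{\mathbb{S}^n}h^p(x,t)f(x)\,dx\le C_2
\]
for some $C_2>0$ independent of $t$. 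Dividing the numerator bound by the denominator bound yields $1/C\le\theta(t)\le C$ with $C=C(n,p,f,M_0)$, which is the claim.

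There is no real obstacle here — the lemma is a direct consequence of the uniform $C^0$ estimates, and the only point requiring a moment's care is that the sign of $p$ affects which of $C_0^p$, $C_0^{-p}$ is the upper versus lower bound for $h^p$; writing $\min\{C_0^{p},C_0^{-p}\}\le h^p\le \max\{C_0^{p},C_0^{-p}\}$ handles all cases $p>-n-1$ uniformly. The compactness of $\mathbb{S}^n$ guarantees all the integrals are finite, and positivity of $f$ and of the lower bounds on $h,r$ guarantees the bounds are strictly positive, so $\theta(t)$ never degenerates to $0$ or $\infty$ on $[0,T)$.
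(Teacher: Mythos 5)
Your proposal is correct and follows the same approach as the paper: the paper's proof simply cites the definition of $\theta(t)$ and invokes Lemmas \ref{C0estimateC}--\ref{C3} directly, while you have spelled out the same estimate of the numerator and denominator in detail, correctly handling both signs of $p$.
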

	\begin{proof}
		By the defination of $\theta(t)$:
		\[	\theta(t)=\int_{\mathbb{S}^n}e^{-\frac{r^2}{2}}r^{n+1}(\xi,t)d\xi\bigg/\int_{\mathbb{S}^n}h^p(x,t)f(x)dx,\]
		now the conclusion of this lemma follows directly form Lemmas \ref{C0estimateC}-\ref{C3}.
	\end{proof}		
	We next derive an upper bound for Gauss curvature $K(\cdot,t)$ of the hypersurface $M_t$ evolved by (\ref{yibanfangcheng2}). By (\ref{7}), this is equivalent to show the lower bound of $\det(h_{ij}+h\delta_{ij})$ for the support function $h(\cdot,t)$ of $M_t$.
	\begin{lemma}\label{C2u}
		Let $X(\cdot,t)$ be a uniformly convex solution to the flow (\ref{yibanfangcheng2}) which encloses the origin for $t \in [0,T)$. Then there is a positive constant $C$ depending only on $p$, $f$, $ \min_{\mathbb{S}^n\times[0,T)}h$ and  $\max_{\mathbb{S}^n\times[0,T)}h$, such that
		\begin{equation}\label{C2u1}
			\det(\nabla^2 h+hI)\geq1/C, \quad    \forall (x,t) \in \mathbb{S}^n\times[0,T).
		\end{equation}
	\end{lemma}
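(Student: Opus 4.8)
The statement is equivalent, via \eqref{7}, to an upper bound for the Gauss curvature $K$ of $M_t$, hence to an upper bound for the normal speed $\mathcal F:=\theta(t)\,e^{r^2/2}K h^p f$: by Lemmas \ref{C0estimateC}, \ref{C0c2}, \ref{C3} and \ref{C00} the factor $\theta e^{r^2/2}h^p f$ lies between two positive constants (in the range $p>-n-1$ covered there), so $K$ and $\mathcal F$ are comparable. In this notation \eqref{8} reads $\partial_t h=-\mathcal F+h$. The plan is the Tso-type maximum principle argument: fix $\varepsilon_0>0$ with $2\varepsilon_0\le\min_{\mathbb S^n\times[0,T)}h$, put
\[
W:=\frac{\mathcal F}{h-\varepsilon_0},
\]
and prove $W$ is bounded above.

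First I would set up the linearised operator. Since $X$ is the inverse Gauss parametrisation, $f=f(x)$ does not depend on $h$, and viewing $\mathcal F$ as a function of $(h,\nabla h,\nabla^2 h,t)$ one gets, with $(b^{ij})$ the inverse of $(b_{ij})$ from \eqref{6},
\[
\frac{\partial\mathcal F}{\partial h_{ij}}=-\mathcal F b^{ij},\qquad \frac{\partial\mathcal F}{\partial h_k}=\mathcal F\nabla_k h,\qquad \frac{\partial\mathcal F}{\partial h}=\mathcal F\Big(h+\tfrac{p}{h}-\sum_i b^{ii}\Big),
\]
the only explicit $t$-dependence being through $\theta$. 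Writing $a^{ij}:=\mathcal F b^{ij}>0$ and $\mathcal L:=\partial_t-a^{ij}\nabla^2_{ij}$, and using \eqref{8} together with $\partial_t\nabla^2_{ij}h=\nabla^2_{ij}\partial_t h$ on the round sphere, a direct computation gives
\[
\mathcal L h=\mathcal F h\sum_i b^{ii}-(n+1)\mathcal F+h,\qquad \mathcal L\mathcal F=\mathcal F^2\sum_i b^{ii}+R,
\]
where $R$ collects terms that the $C^0$, $C^1$ bounds control by $C(\mathcal F^2+\mathcal F)+\tfrac{\theta'(t)}{\theta(t)}\mathcal F$. Since $\sum_i b^{ii}$ is unbounded precisely where $K$ is, the term $\mathcal F^2\sum_i b^{ii}$ is the real danger.

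Next, for arbitrary $T'<T$ let $W$ attain its maximum over $\mathbb S^n\times[0,T']$ at $(x_0,t_0)$; if $t_0=0$ the bound is immediate from the smooth initial data, so assume $t_0>0$. There $\nabla W=0$, $\nabla^2 W\le0$, $\partial_t W\ge0$, hence $\mathcal L W\ge0$, and the quotient rule for $\mathcal L$ (the gradient cross-term dropping out because $\nabla W=0$) yields $\mathcal L\mathcal F-W\,\mathcal L h\ge0$ at $(x_0,t_0)$. The decisive cancellation is
\[
\mathcal F^2\sum_i b^{ii}-W\,\mathcal F h\sum_i b^{ii}=\mathcal F(\mathcal F-Wh)\sum_i b^{ii}=-\varepsilon_0\,W\mathcal F\sum_i b^{ii},
\]
using $\mathcal F=W(h-\varepsilon_0)$ at that point — this is exactly why Tso's correction $-\varepsilon_0$, rather than $W=\mathcal F/h$, is needed. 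By the arithmetic--geometric mean inequality $\sum_i b^{ii}\ge n(\det b)^{-1/n}=nK^{1/n}\ge c\,\mathcal F^{1/n}$, and $\mathcal F\ge\varepsilon_0 W$ (as $h\ge2\varepsilon_0$), so this term is $\le-c_1 W^{2+1/n}$ with $c_1>0$ admissible, while all remaining contributions are $\le C(W^2+W+1)$. The $\tfrac{\theta'}{\theta}\mathcal F$ piece needs separate care: differentiating $\theta(t)=\int_{\mathbb S^n}e^{-r^2/2}r^{n+1}d\xi\big/\int_{\mathbb S^n}h^p f\,dx$, using \eqref{d}, \eqref{8} and the Jacobian identity \eqref{Jac} to turn the $K$-dependent part of the numerator's $t$-derivative into the bounded integral $\int_{\mathbb S^n}(n+1-r^2)h^p f\,dx$, and estimating the denominator's derivative crudely, gives $|\theta'(t)|\le C\big(1+\sup_{\mathbb S^n\times[0,t]}\mathcal F\big)$; since $(x_0,t_0)$ maximises $W$ one has $\sup_{\mathbb S^n\times[0,t_0]}\mathcal F\le C\,W(x_0,t_0)$, so $\tfrac{\theta'}{\theta}\mathcal F\le C(W^2+1)$ there. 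Hence $0\le-c_1W^{2+1/n}+C(W^2+W+1)$ at $(x_0,t_0)$, which forces $W(x_0,t_0)\le C$; letting $T'\uparrow T$ gives $W\le C$ on $\mathbb S^n\times[0,T)$, whence $\mathcal F\le C$ and $\det(\nabla^2 h+hI)=1/K\ge1/C$.

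The main obstacle will be the bookkeeping in $\mathcal L\mathcal F$: one must produce the exact coefficient of $\mathcal F^2\sum_i b^{ii}$, watch it cancel against $-W\,\mathcal L h$ down to $-\varepsilon_0 W\mathcal F\sum_i b^{ii}$, and then check that every remaining quadratic-in-$\mathcal F$ term (in particular $-\tfrac{p}{h}\mathcal F^2$ and $(n+1)W\mathcal F$, for every $p\in(-n-1,\infty)$ and both signs of $p$) is beaten by the $W^{2+1/n}$ gain coming from the Gauss-curvature factor via AM--GM; the second delicate point is the control of $\tfrac{\theta'}{\theta}\mathcal F$, which rests on the Jacobian cancellation \eqref{Jac} and on the $\theta$-bound of Lemma \ref{C00}.
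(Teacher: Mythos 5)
Your proposal is correct and uses precisely the Tso-type auxiliary function $W=\mathcal F/(h-\varepsilon_0)$ maximum-principle argument, which is exactly the mechanism encapsulated in Lemma A.1 of Chen--Li that the paper invokes; the paper merely verifies that \eqref{8} fits the hypotheses of that black-box lemma (with $\beta=1$, $\phi=\theta$, $G=f(x)z^p e^{(z^2+|\bm p|^2)/2}$, $\eta=1$), whereas you re-derive it from scratch, including the treatment of the $\theta'/\theta$ term via the Jacobian identity \eqref{Jac}, which plays the role of their condition (A.5). So the approaches coincide in substance, and your write-up is in effect a self-contained proof of the cited lemma in this special case.
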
		
	\begin{proof}
		This is a consequence of Lemma A.1 in \cite{2021ChenLi}. Note that (\ref{8}) can be written in the form of Lemma A.1 in \cite{2021ChenLi}, provided that $\beta=1,\phi(t)=\theta(t), G(x,z,\bm{p})=f(x)z^pe^{\frac{z^2+|\bm{p}|^2}{2}}$ and $\eta(t)=1.$  Conditions (A.2)-(A.4) follow by Lemmas \ref{C0estimateC}-\ref{C3}. Without loss of generality, we may assume that $\min_{\mathbb{S}^n}\det(\nabla^2 h+hI)(\cdot,t)$ is arbitrarily small. By use of Lemma \ref{C00}, condition (A.5) can be verified. Now the estimation (\ref{C2u1})  follows from Lemma A.1.
	\end{proof}	
	Next lemma shows that the principal radii of curvature of hypersurface $M_t$ evolved by (\ref{yibanfangcheng2}) are bounded from above.		
	\begin{lemma}\label{C2L}
		Let $X(\cdot,t)$ be a uniformly convex solution to the flow (\ref{yibanfangcheng2}) which encloses the origin for $t \in [0,T)$. Then there is a positive constant $C$ depending only on $p$, $f$, $ \min_{\mathbb{S}^n\times[0,T)}h$ and  $\max_{\mathbb{S}^n\times[0,T)}h$, such that
		\begin{equation}\label{C2L1}
			(\nabla^2 h+hI)\leq CI, \quad   \forall (x,t) \in \mathbb{S}^n\times[0,T).
		\end{equation}
	\end{lemma}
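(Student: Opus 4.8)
The plan is to read \eqref{C2L1} as a uniform bound, over $\mathbb{S}^n\times[0,T)$, for the largest eigenvalue of the matrix $W:=(b_{ij})=(\nabla^2h+hI)$, where $h(\cdot,t)$ solves \eqref{8}, and to establish it by the maximum principle. Writing $\Lambda:=\theta(t)e^{r^2/2}h^pfK=\theta(t)e^{r^2/2}h^pf/\det(b_{ij})$, equation \eqref{8} becomes $\partial_th=-\Lambda+h$, and differentiating twice gives $\partial_tb_{ij}=-\nabla_{ij}\Lambda-\Lambda\delta_{ij}+b_{ij}$. Since $\partial\Lambda/\partial b_{kl}=-\Lambda(b^{-1})^{kl}$, where $(b^{-1})$ is the inverse matrix, the leading part of $-\nabla_{ij}\Lambda$ is $\mathcal L b_{ij}$ for the elliptic operator $\mathcal L:=\Lambda(b^{-1})^{kl}\nabla_{kl}$, so $\partial_t-\mathcal L$ is parabolic. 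All the lower–order coefficients produced by differentiating the conformal factor $\theta(t)e^{r^2/2}h^pf$ depend only on $x$, $h$, $\nabla h$ and (through $r^2=h^2+|\nabla h|^2$) on at most second derivatives of $h$; in particular, combining Lemmata \ref{C0estimateC}, \ref{C0c2}, \ref{C3}, \ref{C00} with the lower bound $\det(b_{ij})\geq 1/C$ of Lemma \ref{C2u}, we get $0<\Lambda\leq C$ with $C$ depending only on $p$, $f$ and the bounds \eqref{c1}/\eqref{c2}.

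I would then run the maximum principle on an auxiliary function of the form
$$w(x,t)=\log\lambda_{\max}(W)(x,t)+\varphi\big(h(x,t),|\nabla h|^2(x,t)\big),$$
with $\varphi$ a suitable smooth function (e.g.\ $\varphi=-Nh$, or $\varphi=\tfrac{\beta}{2}|\nabla h|^2$, with large constants $N$, $\beta$) to be fixed at the end. At a space–time maximum $(x_0,t_0)$ of $w$ with $t_0>0$ I rotate the frame so that $(b_{ij})$ is diagonal with $b_{11}=\lambda_{\max}\geq b_{22}\geq\cdots\geq b_{nn}$ and use $\log b_{11}$ as the test quantity. Using that $\nabla_kb_{ij}$ is totally symmetric on $\mathbb{S}^n$ (Codazzi), a commutation relation of the form $\nabla_{11}b_{kk}-\nabla_{kk}b_{11}=b_{11}-b_{kk}$, and the convexity of $b\mapsto(\det b)^{-1}$, the term $-\Lambda^{kl,rs}\nabla_1b_{kl}\nabla_1b_{rs}\le 0$ together with the gradient term produced by $\log$ combine into a non-positive expression; moreover the Hessian of the Gaussian weight $e^{r^2/2}$ contributes an extra good term $\sim-\Lambda|\nabla^2h|^2$; and everything else is either bounded, using $0<\Lambda\le C$, $1/C\le h\le C$, $|\nabla h|\le C$, or is cancelled by $\varphi$. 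Since $\partial_tw-\mathcal Lw\ge 0$ at $(x_0,t_0)$, showing this quantity is negative whenever $\lambda_{\max}$ is large yields the desired bound.

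The main obstacle is the bookkeeping of the third–order terms: $\nabla_{11}(r^2/2)$ and $\nabla_{11}\log\det b$ generate terms containing $\sum_m h_m\nabla_mb_{11}$, which I would eliminate via the first–order condition $\nabla_kw(x_0,t_0)=0$, i.e.\ $\nabla_kb_{11}=-b_{11}\nabla_k\varphi$, and then choose $\varphi$ so that the term it contributes to $\partial_tw-\mathcal Lw$ dominates the only genuinely harmful bounded term, namely the constant $+1$ coming from the expanding drift $+X$ in \eqref{yibanfangcheng2}. This forces $\lambda_{\max}(x_0,t_0)\le C_0$, hence, comparing with the initial data, $\lambda_{\max}\le C$ on $\mathbb{S}^n\times[0,T)$ with $C$ depending only on $p$, $f$ and the bounds in \eqref{c1}/\eqref{c2}, which is \eqref{C2L1}. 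Alternatively, this estimate can be obtained by adapting Lemma A.1 of \cite{2021ChenLi} (as was done for the lower bound of the determinant in Lemma \ref{C2u}) to the largest principal radius instead of the determinant.
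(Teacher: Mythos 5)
Your plan is genuinely different from the paper's: you propose a direct maximum principle on $\log\lambda_{\max}(\nabla^2h+hI)$, whereas the paper passes to the polar dual body $\Omega_t^*$ and applies Lemma~A.2 of \cite{2021ChenLi} (not A.1) to the dual flow \eqref{CL2}. Unfortunately the direct route has a conceptual gap, not just a bookkeeping one. Write the speed as $\Lambda = \theta(t)e^{r^2/2}h^pf\,K$. From Lemma~\ref{C2u} you know $K\leq C$, hence $\Lambda\leq C$, but at this stage of the argument there is no lower bound on $K$ (equivalently, no upper bound on $\det(\nabla^2h+hI)$); the bound $K\geq 1/C$ is precisely a consequence of Lemma~\ref{C2L}, cf.\ Lemma~\ref{lsbound}. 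Now notice that every ``good'' term you list carries a factor of $\Lambda$: the concavity term $-\Lambda^{kl,rs}\nabla_1b_{kl}\nabla_1b_{rs}$, the Ricci commutator term, and your favoured term $-\Lambda(b_{11}-h)^2$ coming from $\nabla_{11}\bigl(\tfrac{r^2}{2}\bigr)$. The ``bad'' expanding drift $+b_{11}$ arising from $+X$ does not. After dividing by $b_{11}$, closing the maximum principle would require something like $\Lambda\, b_{11}\geq C_0>0$ whenever $b_{11}$ is large, i.e.\ $b_{11}/\det b\geq c$. This is not available: if all eigenvalues of $b=\nabla^2h+hI$ are comparably large, then $\Lambda b_{11}\sim b_{11}^{1-n}\to 0$ for $n\geq 2$. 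Choosing $\varphi=-Nh$ or $\varphi=\tfrac{\beta}{2}|\nabla h|^2$ does not help, since $(\partial_t-\mathcal L)\varphi$ again only contributes terms with a factor $\Lambda$ in front of anything growing in $b_{11}$. Using a lower bound for $K$ here would be circular.

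The paper's duality trick is designed exactly to avoid this. Since $r=1/h^*$, the upper bound on $\nabla^2h+hI$ is equivalent to a lower bound on $\nabla^2h^*+h^*I$; by \eqref{4.9} the support function of the polar body evolves by $\partial_th^*=\psi(t,\xi,h^*,\nabla h^*)\det(\nabla^2h^*+h^*I)-h^*$, where $\psi$ depends only on $t,\xi,h^*,\nabla h^*$ and is therefore bounded between positive constants by Lemmas~\ref{C0estimateC}--\ref{C00}. In this form the coefficient in front of the Monge--Amp\`ere operator is bounded both above \emph{and} below, which is exactly the hypothesis of Lemma~A.2 in \cite{2021ChenLi} and makes the minimum principle for the smallest eigenvalue of $\nabla^2h^*+h^*I$ close. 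Your final sentence, suggesting to adapt Lemma~A.1 to the largest principal radius of $M_t$ directly, would meet the same obstruction; the paper's use of A.2 together with polar duality is the essential step, and if you want a direct argument you must first establish a lower bound on $K$ by some other means.
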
		
	\begin{proof}
		Instead of estimating the upper bound of the principal radii of $M_t$, we study an expanding flow (\ref{CL2}) by Gauss curvature for the hypersurface of the polar dual  convex body of $\Omega_t$, denoted by $M^*_t$. We only need to prove the lower bound of the principal radii of $M^*_t$.\\
		\indent
		Let us denote by $\Omega_t$ and $\Omega_t^{*}$ the convex polar  dual bodies whose support functions are $h$ and $h^{*}$ respectively, where $\Omega_t^{*}=\{z\in \R^{n+1}: z\cdot y\le 1, \forall y\in \Omega_t\}$. 
		It is well-know (see \cite{2020LSW}) that
		\begin{equation}\label{4.8}
			r(\xi,t)=\frac{1}{h^*(\xi,t)},
		\end{equation}
		hence by (\ref{13}), we obtain the following relation
		\begin{equation}\label{4.9}
			\frac{h^{n+2}(x,t)(h^*(\xi,t))^{n+2}}{K(p)K^*(p^*)}=1,
		\end{equation}
		where $p\in M_t,p^*\in M_t^*$ are the two point satisfying $p\cdot p^*=1$ and $x, \xi$ are respectively the unit outer normal of $M_t$ and $M_t^*$ at $p$ and $p^*$. Therefore by equation (\ref{8}) we obtain the equation for $h^*$,
		\begin{equation}\label{CL2}
			\partial_th^*(\xi,t)=\psi(t,\xi,h^*,\nabla h^*)\det(\nabla^2h^*+h^*I)-h^*,\quad t\in[0,T),
		\end{equation}	
		where 		
		\[\psi(t,\xi,h^*,\nabla h^*)=\theta(t)f\big(\frac{\nabla h^*+h^*\xi}{\sqrt{(h^*)^2+|\nabla h^*|^2}}\big)\frac{(h^*)^{n+3}e^{\frac{1}{2(h^*)^2}}}{(\sqrt{(h^*)^2+|\nabla h^*|^2})^{n+p+1}},\]	
		
		By Lemmas \ref{C0estimateC}-\ref{C00} and by (\ref{4.8}), we see that $h^*(\cdot,t)$ and $ \psi(t,\xi,h^*,\nabla h^*)$ are bounded between two positive constant. Applying Lemma A.2 in \cite{2021ChenLi} to (\ref{CL2}), we conclude that 
		\[(\nabla^2 h^*+h^*I)\geq C^{-1}I, \quad  \forall (x,t) \in \mathbb{S}^n\times[0,T). \]
		
	\end{proof}
	
	As a consequence of the above a priori estimates, one sees that the convexity of the hypersurface $M_t$ is preserved under the flow (\ref{yibanfangcheng2}) and the equation (\ref{8}) is uniformly parabolic. By the $C^0$ estimates and gradient estimates in Lemmata \ref{C0estimateC}, \ref{C0c2} and the $C^2$ estimates in Lemmata \ref{C2u}, \ref{C2L}, we can obtain the H\"{o}lder continuity of $\nabla^2h$ and $h_t$ by Krylov's theory \cite{Krylov}. Estimate for higher derivatives follows from the standard regularity theory of uniformly parabolic equations. Hence we obtain the long time existence and regularity of solution for the flow (\ref{yibanfangcheng2}). The uniqueness of the smooth solution to (\ref{8}) follows by the parabolic comparison principle. We obtain the following theorem:
	\begin{theorem}\label{CP}
		\indent Let $M_0$ be a smooth, closed, uniformly convex hypersurface  in $\mathbb{R}^{n+1}$ enclosing the origin. Let $f$ is a smooth positive function on $\mathbb{S}^n$, then flow (\ref{yibanfangcheng2}) has  a unique smooth, uniformly convex solution $M_t$ for all time, if one of the following is satisfied
		\begin{itemize}
			\item[(i)] $p>0$,
			\item[(ii)] $-n-1<p\leq0$, $M_t$ is origin-symmetric as long as the flow exists.
		\end{itemize}
		Moreover we have the a priori estimates
		\[||h||_{C_{x,t}^{k,m}\big(\mathbb{S}^n\times[0,\infty)\big)}\leq C_{k,m},\]
		where $C_{k,m}>0$ depends only on $k, m, f, p$ and the geometry of $M_0$.
	\end{theorem}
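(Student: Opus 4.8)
The plan is to prove Theorem \ref{CP} by assembling the a priori estimates of Lemmata \ref{C0estimateC}--\ref{C2L} and invoking the standard regularity theory for uniformly parabolic fully nonlinear equations, together with the parabolic comparison principle for uniqueness. First, since $M_0$ is smooth and uniformly convex we have $\nabla^2 h_0 + h_0 I>0$ at $t=0$, so the scalar equation (\ref{8}) (equivalently (\ref{12})) is parabolic of Monge-Amp\`{e}re type at the initial time and admits a smooth solution on a maximal interval $[0,T)$, along which $M_t$ stays smooth and uniformly convex and, by continuity, the origin stays in the interior of $\Omega_t$. In case (ii) one checks that origin-symmetry is preserved along the flow: since $f$ is even and $M_0$ is origin-symmetric, $-X(\cdot,t)$ also solves (\ref{yibanfangcheng2}) with the same initial hypersurface, so the comparison principle for (\ref{8}) (valid on any subinterval on which the flow is parabolic) forces $M_t=-M_t$. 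The goal is then to show $T=\infty$ by ruling out finite-time blow-up of the relevant norms.

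Next I would collect the a priori estimates. For $p>0$, Lemma \ref{C0estimateC} gives $1/C\le h\le C$ and $|\nabla h|\le C$ on $\mathbb{S}^n\times[0,T)$; for $-n-1<p\le 0$ with $f$ even and $M_t$ origin-symmetric, Lemma \ref{C0c2} gives the same. Lemma \ref{C3} then bounds $r$ and $|\nabla r|$, and Lemma \ref{C00} bounds $\theta(t)$ from above and below. Feeding these into Lemma \ref{C2u} yields $\det(\nabla^2 h+hI)\ge 1/C$, i.e.\ an upper bound for the Gauss curvature of $M_t$, while Lemma \ref{C2L} gives $\nabla^2 h+hI\le CI$, an upper bound for the principal radii. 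Combining the two,
\[
\tfrac{1}{C}\,I \;\le\; \nabla^2 h+hI \;\le\; C\,I \qquad\text{on }\mathbb{S}^n\times[0,T),
\]
so $M_t$ remains uniformly convex with uniform bounds and the equation (\ref{8}) is uniformly parabolic with coefficients controlled solely in terms of $p$, $f$ and the geometry of $M_0$.

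With uniform parabolicity in hand, the concavity of the Monge-Amp\`{e}re operator (after the usual change of variables) together with Krylov's theory \cite{Krylov} yields uniform $C^{2,\alpha}_{x,t}$ estimates for $h$ on $\mathbb{S}^n\times[0,T)$; Schauder estimates then bootstrap these to $\|h\|_{C^{k,m}_{x,t}(\mathbb{S}^n\times[0,T))}\le C_{k,m}$ for all $k,m$, with $C_{k,m}$ independent of $T$. This forces $T=\infty$ and delivers the asserted a priori estimates on $\mathbb{S}^n\times[0,\infty)$. Uniqueness of the smooth solution follows from the parabolic comparison principle applied to the scalar equation (\ref{8}): two solutions with the same initial data coincide.

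The main obstacle is the $C^2$ estimate, namely the simultaneous control of the Gauss curvature from above (lower bound of $\det(\nabla^2 h+hI)$) and of the principal radii from above. The former rests on the structure condition (A.5) of \cite{2021ChenLi}, which in turn needs the two-sided bound on $\theta(t)$ from Lemma \ref{C00} (and hence on all of Lemmata \ref{C0estimateC}--\ref{C3}); the latter is not accessible by a direct maximum principle and instead requires passing to the polar dual body $\Omega_t^{*}$ and studying the associated expanding Gauss curvature flow (\ref{CL2}), as in Lemma \ref{C2L}. Once these are secured, the passage to long-time existence and full regularity is routine parabolic bootstrapping.
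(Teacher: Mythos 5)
Your proposal is correct and takes essentially the same route as the paper: assemble the $C^0$, $C^1$ and $C^2$ a priori estimates from Lemmata \ref{C0estimateC}--\ref{C2L}, conclude uniform parabolicity of (\ref{8}), invoke Krylov's theory and parabolic bootstrapping for higher regularity and long-time existence, and use the parabolic comparison principle for uniqueness. The only addition in your write-up is the explicit derivation (via the comparison principle) that origin-symmetry propagates in case (ii); the paper lists this as part of the hypothesis, so your remark is a harmless strengthening rather than a genuine departure.
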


	\begin{proof}[Proof of Theorem \ref{yibanfangchengthm} $\&$ \ref{yibanfangchengthm2}]
		
		By the a priori estimates in Lemmata \ref{C0estimateC} and \ref{C0c2}, there is a constant $C>0$, independent of $t$, such that
		\begin{equation}\label{400}
			|\Psi(X(\cdot,t))|\leq C ,\quad    \forall t \in [0,\infty).
		\end{equation}
		\indent By Lemma \ref{2.2}, we obtain
		\begin{equation}\label{401}
			\lim_{t\rightarrow\infty}\Psi(X(\cdot,t))-\Psi(X(\cdot,0)) =-\int_{0}^{\infty}|\frac{d}{dt}\Psi(X(\cdot,t))| dt
		\end{equation}
		\indent By (\ref{400}), the left hand side of (\ref{401}) is bounded below by $-2C$. Hence there is a sequence $t_j\rightarrow\infty$ such that
		\[\frac{d}{dt}\Psi(X(\cdot,t_j))\rightarrow0 \quad \text{as}\quad t_j\rightarrow\infty\]
		which implies, by Lemma \ref{2.2},  Lemma \ref{C0estimateC} and \ref{C0c2} again, $h(\cdot,t_j)$ converges to a positive and uniformly convex function $h_\infty\in C^\infty(\mathbb{S}^n)$ which satisfies (\ref{yibanfangcheng1}) with c given by
		\[\frac{1}{c}=\lim_{t_j\rightarrow\infty}\theta(t_j)=\int_{\mathbb{S}^n}e^{-\frac{|\nabla h_\infty|^2+h^2_\infty}{2}}h_\infty\det (\nabla^2h_\infty+hI)dx\bigg/\int_{\mathbb{S}^n}h_\infty^p(x)f(x)dx.\]
	\end{proof}

	\section{Proof of Theorem \ref{mainthm} $\&$ \ref{mainthm2}}\label{sec5}
	
	Firstly, we establish the uniform positive upper and lower bounds for the solutions to the flow \eqref{flow02}.
	
	\begin{lemma}\label{C0estimate}
		Let $h(\cdot,t),t\in[0,T)$, be a smooth, uniformly convex solution to (\ref{8-1}). 
		If $p> n+1$ or $p=n+1$ with $f<\frac{1}{(\sqrt{2\pi})^{n+1}}$, then there is a positive constant $C$ depending only $p$, and the lower and upper bounds of f and $h(\cdot,0)$ such that
		\begin{equation}\label{31}
			1/C\leq h(\cdot,t)\leq C, \quad    \forall t \in [0,T).
		\end{equation}
	\end{lemma}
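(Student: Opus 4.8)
The plan is to deduce \eqref{31} from differential inequalities for the two functions $h_{\max}(t):=\max_{\mathbb{S}^n}h(\cdot,t)$ and $h_{\min}(t):=\min_{\mathbb{S}^n}h(\cdot,t)$. First I would record that, since $h$ is smooth and $\mathbb{S}^n$ compact, both are locally Lipschitz in $t$, hence differentiable for a.e.\ $t\in[0,T)$, and that at such a $t$ one has $h_{\max}'(t)=\partial_t h(x_t,t)$ and $h_{\min}'(t)=\partial_t h(y_t,t)$ for any points $x_t,y_t$ realizing the spatial maximum, resp.\ minimum (because $h(x_t,\cdot)-h_{\max}(\cdot)$ has a maximum equal to $0$ at $t$, so its $t$-derivative vanishes there, and dually for the minimum). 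Then \eqref{31} will follow from a standard ODE-barrier argument for locally Lipschitz functions.

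For the upper bound I would fix $t$ and take $x_t$ a maximum point of $h(\cdot,t)$. There $\nabla h=0$, so $r=h$ by \eqref{3}, and $\nabla^2h\le0$, so $0<h_{ij}+h\delta_{ij}\le h\,\delta_{ij}$, whence $\det(h_{ij}+h\delta_{ij})\le h^{n}$ and $K(x_t,t)\ge h_{\max}^{-n}(t)$ by \eqref{7}. Inserting this into \eqref{8-1} and writing $f_{\min}=\min_{\mathbb{S}^n}f>0$ gives
\[
h_{\max}'(t)\le h_{\max}(t)\Bigl(1-(\sqrt{2\pi})^{n+1}f_{\min}\,e^{h_{\max}^2(t)/2}\,h_{\max}^{p-n-1}(t)\Bigr).
\]
Since $p\ge n+1$, the map $s\mapsto e^{s^2/2}s^{p-n-1}$ tends to $+\infty$ as $s\to\infty$ (the exponential factor alone forces this when $p=n+1$), so there is $C_1=C_1(p,f_{\min})$ with $(\sqrt{2\pi})^{n+1}f_{\min}e^{s^2/2}s^{p-n-1}>1$ for $s>C_1$; hence $h_{\max}$ is non-increasing whenever it exceeds $C_1$, and $h_{\max}(t)\le\max\{h_{\max}(0),C_1\}$ on $[0,T)$.

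For the lower bound the argument is symmetric: at a minimum point $y_t$ of $h(\cdot,t)$ one has $\nabla h=0$ (so $r=h$) and $\nabla^2h\ge0$, so $h_{ij}+h\delta_{ij}\ge h\,\delta_{ij}$, $\det(h_{ij}+h\delta_{ij})\ge h^{n}$ and $K(y_t,t)\le h_{\min}^{-n}(t)$; with $f_{\max}=\max_{\mathbb{S}^n}f$, \eqref{8-1} gives
\[
h_{\min}'(t)\ge h_{\min}(t)\Bigl(1-(\sqrt{2\pi})^{n+1}f_{\max}\,e^{h_{\min}^2(t)/2}\,h_{\min}^{p-n-1}(t)\Bigr).
\]
As $s\to0^+$ the quantity $(\sqrt{2\pi})^{n+1}f_{\max}e^{s^2/2}s^{p-n-1}$ tends to $0$ when $p>n+1$, and to $(\sqrt{2\pi})^{n+1}f_{\max}<1$ when $p=n+1$ (this last inequality being precisely the hypothesis $f<(\sqrt{2\pi})^{-(n+1)}$). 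In both cases the bracket above exceeds $\tfrac12$ for $s$ in some interval $(0,\delta]$ with $\delta=\delta(p,f_{\max})>0$, so $h_{\min}$ is non-decreasing while $0<h_{\min}\le\delta$, and therefore $h_{\min}(t)\ge\min\{h_{\min}(0),\delta\}>0$ on $[0,T)$. Choosing $C=\max\{h_{\max}(0),C_1,1/h_{\min}(0),1/\delta\}$ yields \eqref{31}.

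I expect the main obstacle to be the lower bound in the borderline case $p=n+1$: there the factor $e^{h_{\min}^2/2}h_{\min}^{p-n-1}$ no longer vanishes as $h_{\min}\to0$, so the positivity of the right-hand side hinges on the sharp smallness condition $f<(\sqrt{2\pi})^{-(n+1)}$; for $0<p<n+1$ the factor $h_{\min}^{p-n-1}$ blows up and this mechanism breaks down, which is why that range is treated separately under the extra assumption \eqref{jia1.12}. The remaining points — a.e.\ differentiability of $h_{\max},h_{\min}$ and the ODE-barrier passage to the uniform bounds — are routine.
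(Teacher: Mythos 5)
Your proof is correct and follows essentially the same route as the paper: bound $h_{\max}(t)$ and $h_{\min}(t)$ via the differential inequalities obtained by evaluating (\ref{8-1}) at the spatial extrema, where $\nabla h=0$ gives $r=h$ and the Hessian sign gives $K\geq h_{\max}^{-n}$ resp.\ $K\leq h_{\min}^{-n}$, then close by an ODE-barrier argument. One minor slip: in the borderline case $p=n+1$ the bracket tends to $1-(\sqrt{2\pi})^{n+1}f_{\max}$ as $s\to0^+$, which is positive under the hypothesis but need not exceed $\tfrac12$; it suffices that it stays bounded below by some positive constant on $(0,\delta]$, which the positive limit and continuity already give.
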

	
	\begin{proof}
		Set $h_{\min}(t)=\min_{x\in \mathbb{S}^n}h(x,t)$. By (\ref{8-1}) we have
		\begin{align*}
			\frac{d}{dt}h_{\min} &\ge  -(\sqrt{2\pi})^{n+1}e^{\frac{h_{\min}^2}{2}}\frac{1}{\det(\nabla^2h_{\min}+h_{\min}I)}h_{\min}^pf+h_{\min} \nonumber\\
			&\geq -(\sqrt{2\pi})^{n+1}e^{\frac{h_{\min}^2}{2}}h_{\min}^{p-n}f+h_{\min}\nonumber\\
			&=-((\sqrt{2\pi})^{n+1}e^{\frac{h_{\min}^2}{2}}h_{\min}^{p-n-1}f-1)h_{\min}.
		\end{align*}
		If $(\sqrt{2\pi})^{n+1}e^{\frac{h_{\min}^2}{2}}h_{\min}^{p-n-1}\leq\frac{1}{\max f}$, we have $\frac{d}{dt}h_{\min}\geq0$. This implies $h(\cdot,t)\geq \min_{\mathbb{S}^n}h(\cdot,0)$.
		If not, we have $(\sqrt{2\pi})^{n+1}e^{\frac{h_{\min}^2}{2}}h_{\min}^{p-n-1}>\frac{1}{\max f}$, then we obtain the uniform lower bound of $h$ provided $p>n+1$ or $p=n+1$ with the assumption $f<1/(\sqrt{2\pi})^{n+1}$.
		So we have
		\[
		h(\cdot,t)\geq \min\left\{ \min_{\mathbb{S}^n}h(\cdot,0),\frac{1}{C^{'}}\right\}.
		\]
		Similarly we have
		\[
		h(\cdot,t)\leq \max\left\{ \max_{\mathbb{S}^n}h(\cdot,0),C^{''}\right\}
		\]
		where $C^{''}$ depends only on $\min_{S^n}f$ and $p$.
	\end{proof}

	\indent For $0<p< n+1$, we consider the origin-symmetric hypersurfaces and give the following $L^{\infty}$-norm estimate.
	\begin{lemma}\label{jiaC0estimate}
		Let $M_t$, $t\in[0,T)$ be an origin-symmetric, uniformly convex solution to the flow (\ref{flow02}), and $h(\cdot,t)$ be its support function. If $0<p< n+1$, $f$ is even function and 
		(\ref{jia1.12}) hold, then there is a positive constant $C$ depending only $p$, and the lower and upper bounds of $f$ and the initial hypersurface $M_0$ such that
		\begin{equation}\label{jia31}
			1/C\leq h(\cdot,t)\leq C, \quad    \forall t \in [0,T).
		\end{equation}
	\end{lemma}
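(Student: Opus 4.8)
The plan is to prove the $L^\infty$ bounds for the origin-symmetric flow \eqref{flow02} when $0<p<n+1$ by exploiting the monotonicity of the functional $\Phi$ (Lemma \ref{2.1}) together with the Gaussian volume identity and the subcritical exponent. First I would observe that since $\frac{d}{dt}\Phi(M_t)\le 0$, we have $\Phi(M_t)\le\Phi(M_0)$, i.e.
\[
\frac1p\int_{\mathbb S^n}f\,h^p(x,t)\,dx-\frac{1}{(\sqrt{2\pi})^{n+1}}\int_{\mathbb S^n}d\xi\int_0^{r(\xi,t)}e^{-s^2/2}s^n\,ds\ \le\ \Phi(M_0).
\]
Recognizing the second term as $\gamma_{n+1}(\Omega_t)$, and using that $\gamma_{n+1}(\Omega_t)=\gamma_{n+1}(\Omega_0)$ is NOT available here (the flow \eqref{flow02} uses $\theta(t)=(\sqrt{2\pi})^{n+1}$, not the volume-preserving normalization), I would instead bound $\gamma_{n+1}(\Omega_t)\le 1$ trivially from above, so that $\frac1p\int f h^p\,dx$ is bounded above by a constant depending on $\Phi(M_0)$ alone. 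This already gives an upper bound on $h_{\max}(t)$: using the standard fact (c.f. \cite{2021ChenLi}) that $h(x,t)\ge |x\cdot x^t_{\max}|\,h_{\max}(t)$ for origin-symmetric bodies, one integrates over the cap $\{x\cdot x^t_{\max}\ge 1/2\}$ to get $C\,h_{\max}^p(t)\le \frac1p\int f h^p\,dx\le C'$, hence $h_{\max}(t)\le C$.

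Next, for the lower bound on $h$, I would run a contradiction argument parallel to the one in Lemma \ref{C0c2}, using a John-ellipsoid decomposition of $\Omega_{t_j}$. Suppose $h_{\min}(t_j)\to 0$ along some sequence; since $h_{\max}(t_j)$ is already bounded above, the John ellipsoid $E_{t_j}$ has one semi-axis tending to $0$ while the others stay bounded, so $\gamma_{n+1}(\Omega_{t_j})\le \gamma_{n+1}(E_{t_j})\to 0$. To derive a contradiction we need a positive lower bound on $\gamma_{n+1}(\Omega_t)$, which does not come for free from $\Phi$ monotonicity alone — this is where hypothesis \eqref{jia1.12} enters. From $\Phi(M_t)\le\Phi(M_0)$ and the definition of $\Phi$ we get
\[
\gamma_{n+1}(\Omega_t)\ \ge\ \frac1p\int_{\mathbb S^n}f\,h^p(x,t)\,dx-\Phi(M_0)\ \ge\ \gamma_{n+1}(\Omega_0)-\frac1p\int_{\mathbb S^n}f\,h^p(x,0)\,dx\ +\ \Big(\tfrac1p\int f h^p(\cdot,t)-\tfrac1p\int f h^p(\cdot,0)\Big),
\]
so once we also control $\int f h^p(\cdot,t)\,dx$ from below (which follows since $h$ has a positive lower bound on a fixed-measure cap by the $h_{\max}$ estimate — careful, this is circular, so instead I would argue directly that the right combination of terms stays bounded below using \eqref{jia1.12} as a strict inequality with slack $\delta_0>0$, and the fact that $\frac1p\int f h^p$ varies continuously). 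The cleaner route: \eqref{jia1.12} gives $\gamma_{n+1}(\Omega_0)-\frac1p\int f h^p(\cdot,0)\,dx = \delta_0>0$, and since $\Phi$ is non-increasing, $\gamma_{n+1}(\Omega_t)=\frac1p\int f h^p(\cdot,t)\,dx-\Phi(M_t)\ge \frac1p\int f h^p(\cdot,t)\,dx-\Phi(M_0)$; combining with the already-established upper bound $h\le C$ (hence $\frac1p\int f h^p(\cdot,t)\,dx\le C_1$) and lower bound reasoning on $\Phi(M_t)\ge -C_2$, one extracts $\gamma_{n+1}(\Omega_t)\ge \delta_0/2$ for all $t$, contradicting $\gamma_{n+1}(\Omega_{t_j})\to 0$. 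Thus $h_{\min}(t)\ge 1/C$.

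The main obstacle I anticipate is the lower-bound argument: unlike the normalized flow \eqref{yibanfangcheng2}, the flow \eqref{flow02} does not preserve Gaussian volume, so the positive lower bound on $\gamma_{n+1}(\Omega_t)$ must be squeezed out of the functional inequality $\Phi(M_t)\le\Phi(M_0)$ combined with the assumption \eqref{jia1.12}, and one has to be careful that the bookkeeping of the $\frac1p\int f h^p$ terms does not become circular. The subcritical condition $p<n+1$ is used exactly as in Lemma \ref{C0c2} via the Hölder/Blaschke–Santaló estimate $\int_{\mathbb S^n_1}(h_{E_j}/(n+1))^p\,dx\le C|\mathbb S^n_1|^{(p+n+1)/(n+1)}\to 0$ to rule out the degeneration of the ellipsoid — but here, since $h_{\max}$ is already bounded, the degeneration is forced to be into a slab, and the Gaussian-volume-vanishing argument is more direct than in the $-n-1<p\le0$ case. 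Once both bounds in \eqref{jia31} are in hand, the gradient bound follows as before from $|\nabla h|\le 2h_{\max}$ via \eqref{2}, though the statement as given only asks for \eqref{jia31}.
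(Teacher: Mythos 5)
Your proposal follows essentially the same route as the paper: upper bound on $h_{\max}$ via $\Phi$--monotonicity, the origin-symmetry estimate $h(x,t)\geq h_{\max}(t)|x\cdot x^t_{\max}|$, and the trivial bound $\gamma_{n+1}(\Omega_t)\leq 1$; then lower bound by contradiction, using \eqref{jia1.12} to pin $\gamma_{n+1}(\Omega_t)$ away from zero while the degenerating sequence forces $\gamma_{n+1}(\Omega_{t_j})\to 0$.

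One remark on the middle paragraph, where you worry about circularity: there is none, and the ``cleaner route'' is even cleaner than you state. Since $p>0$, $f>0$, $h>0$ give $\frac{1}{p}\int_{\mathbb S^n} f\,h^p(\cdot,t)\,dx\geq 0$, the chain
\[
\gamma_{n+1}(\Omega_t)=\tfrac{1}{p}\int f\,h^p(\cdot,t)\,dx-\Phi(M_t)\ \geq\ \tfrac{1}{p}\int f\,h^p(\cdot,t)\,dx-\Phi(M_0)\ \geq\ -\Phi(M_0)=\gamma_{n+1}(\Omega_0)-\tfrac{1}{p}\int f\,h^p(\cdot,0)\,dx=\delta_0>0
\]
closes immediately; you do not need the upper bound on $h$, the lower bound on $\Phi$, or the factor $\delta_0/2$. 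This is exactly how the paper argues. For the vanishing of $\gamma_{n+1}(\Omega_{t_j})$, the paper invokes the Blaschke selection theorem to extract a limiting compact $O$-symmetric set $\Omega$ with $r_\Omega(\pm\xi_0)=0$, hence contained in a hyperplane, so $r(\cdot,t_i)\to 0$ a.e.\ and bounded convergence kills the Gaussian-volume integral. Your John-ellipsoid degeneration (with $\Omega_{t_j}\subset E_{t_j}$ and one semi-axis collapsing while all are bounded above) reaches the same conclusion; both are fine, and here --- unlike the $-n-1<p\leq 0$ case --- neither the H\"older decomposition nor Blaschke--Santal\'o is actually needed, so that discussion is a harmless detour.
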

	
	\begin{proof}
		Let $r_{\min}(t)=\min_{\mathbb{S}^n}r(\cdot,t)$ and $r_{\max}(t)=\max_{\mathbb{S}^n}r(\cdot,t)$. By a rotation of coordinates we may assume that $r_{\max}(t)=r(e_1,t)$. Since $M_t$ is origin-symmetric, the points $\pm r_{\max}(t)e_1\in M_t$. Hence
		\[h(x,t)=\sup\{p\cdot x :p\in M_t\}\geq r_{\max}(t)|x\cdot e_1|, \quad \forall x \in\mathbb{S}^n.\]
		Therefore we know from Lemma \ref{2.1}
		\begin{align}\label{jia32}
			\Phi(M_0) &\geq \Phi(M_t) \nonumber\\
			& \geq\frac{1}{p}(\min_{\mathbb{S}^n}f)r_{\max}^p\int_{\mathbb{S}^n}|x\cdot e_1|^pdx-1\nonumber\\
			& \geq C_1r_{\max}^p-1,
		\end{align}
		this proves the upper bound in (\ref{jia31}).\\
		\indent Next we derive a positive lower bound for $r(\xi,t)$ by contradiction. Assume that $r(\xi,t)$ is
		not uniformly bounded away from 0 which means there exists $t_i$
		\begin{equation}\label{jia33}
			r_{\min}(t_i)\rightarrow 0
		\end{equation}
		as $i\rightarrow\infty,$ where $t_i\in[0,T)$. On the other hand, we have $\Omega_t\subset B_R$, where $\Omega_t$ is denotes the convex body enclosed by $M_t$ and $B_R$ denotes the ball centered ar origin with radius R. We may therefore use Blaschke selection theorem and assume
		(by taking a subsequence) that $\Omega_{t_i}$ converges in Hausdorff metric to a compact convex
		$O$-symmetric set $\Omega$. So by (\ref{jia33}), we know that there exists $\xi_0\in\mathbb{S}^n$ such that $r_\Omega(\xi_0)=r_\Omega(-\xi_0)=0$, which implies $\Omega$ in a lower-dimensional subspace. This means that
		\[r(\xi,t_i)\rightarrow 0,\]
		as $i\rightarrow\infty$ almost everywhere with respect to the spherical Lebesgue measure. 
		By Lemma \ref{2.1}
		\begin{align*}
			&\frac{1}{p}\int_{\mathbb{S}^n}f(x)h^p(x,0)dx-\frac{1}{(\sqrt{2\pi})^{n+1}}\int_{\mathbb{S}^n}d\xi\int_0^{r(\xi,0)}e^{-\frac{s^2}{2}}s^nds\nonumber\\
			&\geq\frac{1}{p}\int_{\mathbb{S}^n}f(x)h^p(x,t)dx-\frac{1}{(\sqrt{2\pi})^{n+1}}\int_{\mathbb{S}^n}d\xi\int_0^{r(\xi,t)}e^{-\frac{s^2}{2}}s^nds\nonumber\\
			&\geq-\frac{1}{(\sqrt{2\pi})^{n+1}}\int_{\mathbb{S}^n}d\xi\int_0^{r(\xi,t)}e^{-\frac{s^2}{2}}s^nds
		\end{align*}
		Combined with bounded convergence theorem, we conclude
		\[\gamma_{n+1}(\Omega_0)-\frac{1}{p}\int_{\mathbb{S}^n}f(x)h^p(x,0)dx\leq\frac{1}{(\sqrt{2\pi})^{n+1}}\int_{\mathbb{S}^n}d\xi\int_0^{r(\xi,t_i)}e^{-\frac{s^2}{2}}s^nds\rightarrow 0\]
		as $i\rightarrow\infty$, which is a contraction to (\ref{jia1.12}).
		
	\end{proof}

	For convex hypersurface, the gradient estimate is a direct consequence of the $L^{\infty}$-norm estimate.
	\begin{lemma}\label{C1estimate}
		Let $h(\cdot,t)$, $t\in[0,T)$, be a smooth, uniformly convex solution to (\ref{8-1}). Then we have the gradient estimate
		\begin{equation}\label{32}
			|\nabla h(\cdot,t)|\leq \max_{\mathbb{S}^n\times[0,T)}h, \quad    \forall t \in [0,T).
		\end{equation}
	\end{lemma}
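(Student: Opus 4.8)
The plan is to deduce the bound directly from the pointwise relation between the support function and the radial function established in Section~\ref{sec2}, together with the elementary fact that the maximum of the radial function of a convex body enclosing the origin equals the maximum of its support function. In particular, no parabolic machinery is needed: the estimate holds at each fixed time for any convex hypersurface enclosing the origin.

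First I would recall from (\ref{2}) and (\ref{3}) that, for each $x\in\mathbb{S}^n$, the point $y\in M_t$ with outer unit normal $x$ is $y=h(x,t)x+\nabla h(x,t)$, and since $\nabla h(x,t)$ is tangential to $\mathbb{S}^n$ at $x$ this is an orthogonal decomposition, so that $|y|^2=h^2(x,t)+|\nabla h(x,t)|^2$. Writing $r(x,t)=|y|$ for the corresponding value of the radial function, this gives
\[
|\nabla h(x,t)|^2=r^2(x,t)-h^2(x,t)\le r^2(x,t),\qquad\forall\,(x,t)\in\mathbb{S}^n\times[0,T).
\]

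Next I would use that $\max_{\mathbb{S}^n}r(\cdot,t)=\max_{\mathbb{S}^n}h(\cdot,t)$ for every $t$, exactly as in estimate (\ref{C31}): if $h(\cdot,t)$ attains its maximum at $x_1$, then the supporting point $y_1=h(x_1,t)x_1+\nabla h(x_1,t)$ satisfies $|y_1|\ge h(x_1,t)$, so $\max_{\mathbb{S}^n}r(\cdot,t)\ge\max_{\mathbb{S}^n}h(\cdot,t)$; conversely, if $r(\cdot,t)$ attains its maximum at a direction $\xi_0$ with boundary point $p=r(\xi_0,t)\xi_0\in M_t$, then $h(\xi_0,t)=\sup_{q\in M_t}q\cdot\xi_0\ge p\cdot\xi_0=r(\xi_0,t)$, so $\max_{\mathbb{S}^n}h(\cdot,t)\ge\max_{\mathbb{S}^n}r(\cdot,t)$. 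Combining the two displays,
\[
|\nabla h(x,t)|\le r(x,t)\le\max_{\mathbb{S}^n}r(\cdot,t)=\max_{\mathbb{S}^n}h(\cdot,t)\le\max_{\mathbb{S}^n\times[0,T)}h,
\]
which is (\ref{32}).

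There is no substantive obstacle here: the estimate is purely algebraic once (\ref{2})--(\ref{3}) and the identity $\max r=\max h$ are available, and it does not use the evolution equation (\ref{8-1}) beyond the fact that each $M_t$ is a convex hypersurface enclosing the origin. The only point requiring a little care is that $\nabla h$ is tangential, so that the cross term in $|y|^2$ drops out and Pythagoras applies; this is exactly the content of (\ref{2})--(\ref{3}).
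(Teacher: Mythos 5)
Your proof is correct and follows essentially the same route as the paper: both deduce $|\nabla h|\le r$ from the Pythagorean relation (\ref{3}), and then invoke the elementary identity $\max_{\mathbb{S}^n}r(\cdot,t)=\max_{\mathbb{S}^n}h(\cdot,t)$ to close the estimate. The only difference is that you spell out the supporting-hyperplane argument behind $\max r=\max h$, which the paper uses without comment; this adds clarity but is not a substantively different approach.
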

	
	\begin{proof}
		By the relation (\ref{3}), we have
		\[\max_{\mathbb{S}^n}|\nabla h|^2\leq \max_{\mathbb{S}^n}r^2(\cdot,t)=\max_{\mathbb{S}^n}h^2(\cdot,t).\]
		So
		\[\max_{\mathbb{S}^n}|\nabla h|\leq \max_{\mathbb{S}^n}h(\cdot,t).\]
	\end{proof}
	
	Similarly we have the estimate for the radial function $r$.
	
	\begin{lemma}\label{33}
		Let $X(\cdot,t), t\in[0,T)$ be a uniformly convex solution to (\ref{flow02}). Let $h$ and $r$ be its support function and radial function. Then
		\begin{equation}\label{34}
			\min_{\mathbb{S}^n\times[0,T)}h\leq r(\cdot,t)\leq \max_{\mathbb{S}^n\times[0,T)}h, \quad    \forall t \in [0,T),
		\end{equation}
		and
		\begin{equation}\label{35}
			|\nabla r(\cdot,t)|\leq C, \quad    \forall t \in [0,T),
		\end{equation}
		where $C>0$ depends only on $\min_{\mathbb{S}^n\times[0,T)}h$ and $\max_{\mathbb{S}^n\times[0,T)}h$.
	\end{lemma}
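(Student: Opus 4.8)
The plan is to argue exactly as in Lemma~\ref{C3}, whose statement and proof are identical in form: both estimates \eqref{34} and \eqref{35} are purely geometric facts relating the support function and the radial function of a uniformly convex body that encloses the origin, and they use nothing about the evolution equation \eqref{flow02} beyond convexity of $M_t$ and the fact that $O$ lies in the interior of $\Omega_t$.

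First I would prove \eqref{34} via the elementary identities
\[
\max_{\mathbb{S}^n} h(\cdot,t) = \max_{\mathbb{S}^n} r(\cdot,t), \qquad \min_{\mathbb{S}^n} h(\cdot,t) = \min_{\mathbb{S}^n} r(\cdot,t).
\]
For the maxima: if $y_0 \in M_t$ realizes $|y_0| = \max_{\mathbb{S}^n} r(\cdot,t)$, then $y_0$ is parallel to the outer unit normal $x_0$ of $M_t$ at $y_0$ (it maximizes $|y|$ on $M_t$), so $h(x_0) = x_0 \cdot y_0 = |y_0|$; conversely $h(x) = x \cdot \nu^{-1}(x) \le |\nu^{-1}(x)| \le \max_{\mathbb{S}^n} r(\cdot,t)$ for every $x$. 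For the minima: the boundary point $y_1 \in M_t$ closest to the origin satisfies $|y_1| = \min_{\mathbb{S}^n} r(\cdot,t)$ and its supporting hyperplane lies at distance $|y_1|$ from $O$, so $\min_{\mathbb{S}^n} h(\cdot,t) \le |y_1|$; and since the centered ball of radius $\min_{\mathbb{S}^n} r(\cdot,t)$ is contained in $\Omega_t$, every supporting hyperplane is at distance $\ge \min_{\mathbb{S}^n} r(\cdot,t)$ from $O$, i.e. $h(x,t) \ge \min_{\mathbb{S}^n} r(\cdot,t)$. Taking infima and suprema over $t \in [0,T)$ then gives \eqref{34}.

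Next, \eqref{35} follows from \eqref{11}: since $v = r/h = \sqrt{1 + |\nabla \log r|^2}$, one has $|\nabla r|^2 = r^2\big((r/h)^2 - 1\big) \le r^4/h^2$, hence $|\nabla r(\cdot,t)| \le r^2/h$. Combining this pointwise bound with \eqref{34} and the uniform $C^0$ bounds yields
\[
|\nabla r(\cdot,t)| \le \frac{\big(\max_{\mathbb{S}^n\times[0,T)} h\big)^2}{\min_{\mathbb{S}^n\times[0,T)} h} =: C ,
\]
which depends only on the indicated quantities. No genuine difficulty arises here; the only point deserving care is the justification of the two max/min identities above, which is standard convex geometry and could equally well be quoted from the reference used for Lemma~\ref{C3}.
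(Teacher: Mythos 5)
Your proof is correct and follows exactly the same route as the paper's: the max/min identities $\max h=\max r$, $\min h=\min r$ give \eqref{34}, and the relation $v=r/h=\sqrt{1+|\nabla\log r|^2}$ from \eqref{11} gives the pointwise bound $|\nabla r|\le r^2/h$ and hence \eqref{35}. You merely supply a fuller justification of the max/min identities, which the paper states without proof.
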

	
	\begin{proof}
		Estimate (\ref{34}) follow from $\max_{\mathbb{S}^n}h(\cdot,t)=\max_{\mathbb{S}^n}r(\cdot,t)$ and $\min_{\mathbb{S}^n}h(\cdot,t)=\min_{\mathbb{S}^n}r(\cdot,t)$. Estimate (\ref{35}) follows from (\ref{34}) and (\ref{11}) that we have $|\nabla r(\cdot,t)|\leq \frac{r^2}{h}$.
	\end{proof}

			When $\theta(t)=(\sqrt{2\pi})^{n+1}$, the proof of the $C^2$ estimate is the same as Lemmas \ref{C2u}, \ref{C2L}. Thus we can get the following lemma directly.
			\begin{lemma}\label{lsbound}
				Let $X(\cdot,t)$ be the  solution of the flow (\ref{flow02}) for $t\in[0,T)$. Then there is a positive constant $C$ depending only on $p$, $f$, $\min_{\mathbb{S}^n\times[0,T)}h$, $\max_{\mathbb{S}^n\times[0,T)}h$, such that the principal curvatures of $X(\cdot,t)$ are bounded from above and below
				\begin{equation}\label{4.7}
					1/C\leq\kappa_i(\cdot,t) \leq C,\quad \forall t\in[0,T)\quad \text{and}\quad i=1,\cdots,n.
				\end{equation}
			\end{lemma}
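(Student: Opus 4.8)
The plan is to read off Lemma~\ref{lsbound} from the two $C^{2}$ estimates already established for the general flow~\eqref{yibanfangcheng2}, namely Lemma~\ref{C2u} and Lemma~\ref{C2L}, by simply specialising the time factor $\theta(t)$. When $\theta(t)\equiv(\sqrt{2\pi})^{n+1}$ the support-function equation~\eqref{8} becomes~\eqref{8-1}, which has exactly the same Monge--Amp\`ere structure, only with the (a priori bounded, positive) factor $\theta(t)$ replaced by a fixed positive constant. Since $\theta$ is now constant, the bound on $\theta(t)$ furnished by Lemma~\ref{C00} is not even needed; all that is required from the lower-order theory are the $C^{0}$ bounds (Lemma~\ref{C0estimate} for $p>n+1$ or $p=n+1$ with $f<(\sqrt{2\pi})^{-n-1}$, and Lemma~\ref{jiaC0estimate} for $0<p<n+1$ under~\eqref{jia1.12}), the gradient bound $|\nabla h|\le\max h$ (Lemma~\ref{C1estimate}), and the corresponding bounds for the radial function (Lemma~\ref{33}).

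First I would prove the upper bound on the Gauss curvature, equivalently $\det(\nabla^{2}h+hI)\ge 1/C$, by repeating the argument of Lemma~\ref{C2u}: write~\eqref{8-1} in the form treated by Lemma~A.1 of~\cite{2021ChenLi}, now with $\beta=1$, $\phi(t)=(\sqrt{2\pi})^{n+1}$, $G(x,z,\bm{p})=f(x)z^{p}e^{\frac{z^{2}+|\bm{p}|^{2}}{2}}$ and $\eta(t)=1$. Conditions (A.2)--(A.4) follow from Lemmata~\ref{C0estimate}--\ref{33} exactly as before, and condition (A.5), which in Lemma~\ref{C2u} was verified via Lemma~\ref{C00}, is here immediate because $\phi$ is a fixed positive constant. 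Hence $\det(\nabla^{2}h+hI)\ge 1/C$ on $\mathbb{S}^{n}\times[0,T)$ with $C$ depending only on $p$, $f$, $\min h$ and $\max h$.

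Next I would bound the principal radii from above, following Lemma~\ref{C2L}: pass to the polar dual body $\Omega_{t}^{*}$, whose support function is $h^{*}=1/r$, and use~\eqref{4.8}--\eqref{4.9} to derive the evolution equation~\eqref{CL2} for $h^{*}$. The coefficient $\psi$ there depends on $\theta(t)$ only through the factor $(\sqrt{2\pi})^{n+1}$, so by the $C^{0}$ and $C^{1}$ estimates both $h^{*}(\cdot,t)$ and $\psi(t,\xi,h^{*},\nabla h^{*})$ are pinched between two positive constants; Lemma~A.2 of~\cite{2021ChenLi} then gives $\nabla^{2}h^{*}+h^{*}I\ge C^{-1}I$, which is equivalent to $\nabla^{2}h+hI\le CI$. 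Combining this with the lower bound on the determinant from the previous step, each eigenvalue $\lambda_{i}$ of $\nabla^{2}h+hI$ is bounded above by $C$ and below by $\det(\nabla^{2}h+hI)\big/\prod_{j\neq i}\lambda_{j}\ge C^{-n}$; since the $\lambda_{i}$ are precisely the principal radii of curvature and $\kappa_{i}=\lambda_{i}^{-1}$, the two-sided bound~\eqref{4.7} follows.

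I do not expect any genuine obstacle here: the substance is entirely contained in Lemmata~\ref{C2u} and~\ref{C2L} and in Lemmata~A.1--A.2 of~\cite{2021ChenLi}. The only point to check is that replacing the variable factor $\theta(t)$ by the constant $(\sqrt{2\pi})^{n+1}$ does not break the structural hypotheses of those maximum-principle lemmas, and in fact it only makes them easier to verify, since the auxiliary estimate on $\theta(t)$ (Lemma~\ref{C00}) becomes vacuous.
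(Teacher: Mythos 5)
Your proposal is correct and is exactly the paper's argument: the paper proves this lemma with the one-line remark that when $\theta(t)\equiv(\sqrt{2\pi})^{n+1}$ the proofs of Lemmata~\ref{C2u} and~\ref{C2L} apply verbatim, which is precisely what you spelled out, including the observation that condition (A.5) of Lemma A.1 in \cite{2021ChenLi} becomes trivial once $\theta$ is constant. Your closing step, combining the upper bound on the principal radii with the lower bound on $\det(\nabla^2 h + hI)$ to get the two-sided pinching of the $\kappa_i$, is the standard (implicit) finish and is correct.
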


				As a consequence of the above a priori estimates, one sees that the convexity of the hypersurface $M_t$ is preserved under the flow (\ref{flow02}) and the equation (\ref{8-1}) is uniformly parabolic. By the $L^\infty$-norm estimates and gradient estimates in Lemmata \ref{C0estimate}, \ref{jiaC0estimate} and \ref{C1estimate}, we can obtain the H\"{o}lder continuity of $\nabla^2h$ and $h_t$ by Krylov's theory \cite{Krylov}. Estimate for higher derivatives follows from the standard regularity theory of uniformly parabolic equations. Hence we obtain the long time existence and regularity of solution for the flow (\ref{flow02}). The uniqueness of the smooth solution to (\ref{8-1}) follows by the parabolic comparison principle. We obtain the following theorem:
				\begin{theorem}\label{jiajielun}
					\indent Let $M_0$ be a smooth, closed, uniformly convex hypersurface  in $\mathbb{R}^{n+1}$ enclosing the origin. Let $f$ is a smooth positive function on $\mathbb{S}^n$, then flow (\ref{flow02}) has  a unique smooth, uniformly convex solution $M_t$ for all time, if one of the following is satisfied
					\begin{itemize}
						\item[(i)] $p> n+1$,
						\item[(ii)] $p= n+1$ and $f<\frac{1}{(\sqrt{2\pi})^{n+1}}$,
						\item[(iii)] $0<p<n+1$, $M_t$ is origin-symmetric as long as the flow exists and $f$ is even function satisfying
						(\ref{jia1.12}).
					\end{itemize}
					Moreover we have the a priori estimates
					\[||h||_{C_{x,t}^{k,m}\big(\mathbb{S}^n\times[0,\infty)\big)}\leq C_{k,m},\]
					where $C_{k,m}>0$ depends only on $k, m, f, p$ and the geometry of $M_0$.
				\end{theorem}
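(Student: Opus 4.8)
The plan is to combine short-time existence with a priori estimates that are uniform in the (a priori finite) existence time, a continuation argument, and then uniqueness via the parabolic comparison principle.

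First I would recall that along the flow \eqref{flow02} the support function $h(\cdot,t)$ of $M_t$ satisfies the scalar equation \eqref{8-1}, which is of parabolic Monge--Amp\`ere type. Since $M_0$ is smooth and uniformly convex, $\det(\nabla^2 h_0+h_0 I)>0$, so the equation is (strictly) parabolic at $t=0$, and the standard short-time existence theory for such equations yields a unique smooth, uniformly convex solution on a maximal interval $[0,T^\ast)$; uniqueness on this interval follows from the parabolic comparison principle, since the difference $w=h_1-h_2$ of two solutions with the same initial data solves a linear parabolic equation with zero initial data, hence vanishes identically. It therefore remains to prove $T^\ast=\infty$ together with the stated uniform bounds.

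Next I would assemble the a priori estimates, noting that every constant produced is independent of $T^\ast$. In case (i) $p>n+1$, and in case (ii) $p=n+1$ with $f<(\sqrt{2\pi})^{-(n+1)}$, Lemma \ref{C0estimate} gives $1/C\le h(\cdot,t)\le C$ on $[0,T^\ast)$; in case (iii), where $M_t$ is origin-symmetric, $f$ is even, and \eqref{jia1.12} holds, Lemma \ref{jiaC0estimate} gives the same two-sided bound. Lemmas \ref{C1estimate} and \ref{33} then control $|\nabla h|$, the radial function $r$ and $|\nabla r|$ in terms of these $C^0$ bounds. Lemma \ref{lsbound} (whose proof parallels Lemmas \ref{C2u} and \ref{C2L}) supplies two-sided bounds $1/C\le\kappa_i(\cdot,t)\le C$ on the principal curvatures, equivalently $C^{-1}I\le\nabla^2 h+hI\le C I$. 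Consequently the eigenvalues of the coefficient matrix of the linearization of \eqref{8-1} are pinched between two positive constants, so the equation is uniformly parabolic; applying Krylov's theory \cite{Krylov} for concave fully nonlinear uniformly parabolic equations gives uniform $C^{2+\alpha,\,1+\alpha/2}_{x,t}$ estimates, i.e. H\"older continuity of $\nabla^2 h$ and $h_t$ on $\mathbb{S}^n\times[0,T^\ast)$. Differentiating the equation and invoking the linear parabolic Schauder theory bootstraps to $\|h\|_{C^{k,m}_{x,t}(\mathbb{S}^n\times[0,T^\ast))}\le C_{k,m}$ for all $k,m$. Because $h$ remains smooth and uniformly convex up to $t=T^\ast$, the short-time existence result can be reapplied past $T^\ast$, contradicting maximality unless $T^\ast=\infty$; hence the solution exists for all time and all estimates hold on $\mathbb{S}^n\times[0,\infty)$.

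The main obstacle is the two-sided curvature estimate of Lemma \ref{lsbound}, and in particular the upper bound on the principal radii of curvature of $M_t$ (equivalently the positive lower bound for $\det(\nabla^2 h+hI)$ together with the lower bound for $\kappa_i$): this cannot be read off directly from \eqref{8-1} and is instead obtained by passing to the expanding Gauss curvature flow \eqref{CL2} of the polar dual bodies and bounding the principal radii of $M_t^\ast$ from below. Since that work is already carried out in the preceding lemmas, within the present theorem the only point requiring care is verifying that each constant along the chain of estimates depends solely on $p$, $f$ and the geometry of $M_0$, so that the continuation argument genuinely closes.
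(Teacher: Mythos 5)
Your proposal matches the paper's argument essentially step for step: assemble the $C^0$ and gradient bounds from Lemmas \ref{C0estimate}, \ref{jiaC0estimate}, \ref{C1estimate}, \ref{33}, the two-sided curvature bounds from Lemma \ref{lsbound}, invoke Krylov's regularity for uniformly parabolic concave equations and bootstrap, and then run the standard continuation plus comparison-principle argument. The paper simply states this chain more tersely; your version makes the continuation step and the constant-tracking explicit, but the underlying route is identical.
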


				\begin{proof}[Proof of Theorem \ref{mainthm}]
					
					By the a priori estimates in Lemmata \ref{C0estimate} and \ref{jiaC0estimate}, there is a constant $C>0$, independent of $t$, such that
					\begin{equation}\label{5.7}
						|\Phi(X(\cdot,t))|\leq C ,\quad    \forall t \in [0,\infty).
					\end{equation}
					\indent By Lemma \ref{2.1}, for any $T>0$, we obtain
					\begin{align*}
						\Phi(X(\cdot,T))-\Phi(X(\cdot,0)) &=-(\sqrt{2\pi})^{n+1}\int_{0}^{T} \int_{\mathbb{S}^n} e^{\frac{r^2}{2}}Kh(fh^{p-1}-\frac{1}{(\sqrt{2\pi})^{n+1}}\frac{1}{K}e^{\frac{-r^2}{2}})^2dxdt\\
						& \leq -C_0\int_{0}^{T} \int_{\mathbb{S}^n} (fh^{p-1}-\frac{1}{(\sqrt{2\pi})^{n+1}}\frac{1}{K}e^{\frac{-r^2}{2}})^2dxdt.
					\end{align*}
					\indent By (\ref{5.7}), the above inequality implies there exists a subsequence of times $t_j\rightarrow \infty$ such that $M_{t_j}$ converges to a limiting hypersurface which satisfies (\ref{flow01}).
					
					To complete the proof of Theorem \ref{mainthm}, it suffices to show that the solution of (\ref{flow01}) is unique. The equation can be written as
					\begin{equation}\label{5.8}
						fh^{p-1}=\frac{1}{(\sqrt{2\pi})^{n+1}}\det (\nabla^2h+hI)e^{-\frac{{h^2+|\nabla h|^2}}{2}}.
					\end{equation}
					
					Let $h_1$ and $h_2$ be two smooth solutions of (\ref{5.8}). Suppose $F=h_1/h_2$ attains its maximum at $x_0\in\mathbb{S}^n$. Then at $x_0$
					\begin{equation}\label{5.9}
						0=\nabla\log F=\frac{\nabla h_1}{h_1}-\frac{\nabla h_2}{h_2},
					\end{equation}
					and $\nabla^2\log F$ is a negative-semidefinite matrix at $x_0$
					\begin{align}\label{5.11}
						0 & \geq \nabla^2\log F \nonumber\\
						& = \frac{\nabla^2 h_1}{h_1}-\frac{\nabla h_1\otimes \nabla h_1}{h_1^2}-\frac{\nabla^2 h_2}{h_2}+\frac{\nabla h_2\otimes \nabla h_2}{h_2^2} \nonumber\\
						&=\frac{\nabla^2 h_1}{h_1}-\frac{\nabla^2 h_2}{h_2}.
					\end{align}
					By (\ref{5.8}) and (\ref{5.11}) we get at $x_0$
					\begin{align}\label{5.10}
						1 & =\frac{e^{\frac{h^2_1+|\nabla h_1|^2}{2}}\det (\nabla^2 h_2+h_2I)h_2^{1-p}}{e^{\frac{h_2^2+|\nabla h_2|^2}{2}}\det (\nabla^2 h_1+h_1I)h_1^{1-p}} \nonumber\\
						& \geq \frac{e^{\frac{h^2_1+|\nabla h_1|^2}{2}}}{e^{\frac{h_2^2+|\nabla h_2|^2}{2}}}F(x_0)^{p-(n+1)}.
					\end{align}
					If $F(x_0)>1$, we have $h_1(x_0)>h_2(x_0)$, then by (\ref{5.9}), we obtain
					\[\frac{e^{\frac{h^2_1+|\nabla h_1|^2}{2}}}{e^{\frac{h_2^2+|\nabla h_2|^2}{2}}}=e^{\frac{1}{2}\left(\frac{|\nabla h_1|^2}{h_1^2}+1\right)(h_1^2-h_2^2)}>1,\]
					which contradicts to (\ref{5.10}) since $p\ge n+1$. So we obtain $F(x_0)=\max_{\mathbb{S}^n}F\leq1$. Similarly we can show $\min_{\mathbb{S}^n} F\geq1$. Therefore $h_1\equiv h_2$.
				\end{proof}
		In order to prove the convergence of the flow \eqref{flow02}, we require the following better gradient estimate.
			\begin{lemma}\label{ImproGra}
				Let $f=\frac{1}{2(\sqrt{2\pi})^{n+1}}$ and $p\geq n+1$, $X(\cdot,t)$ be a smooth uniformly convex solution to the flow \eqref{flow02}, then there exist positive constant $C$ and $C_0$, depending only on the intial hypersurface and $p$, such that 
				\[\max_{\mathbb{S}^n}\frac{|\nabla r|}{r}\leq Ce^{-C_0t}\]
				for all $t>0$.

			\end{lemma}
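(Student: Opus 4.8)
The plan is to prove this by a parabolic maximum principle applied to the scale‑invariant gradient quantity $Q:=|\nabla\log r|^{2}=|\nabla\varphi|^{2}$, where $\varphi:=\log r$ and $\sqrt{Q}=\tfrac{|\nabla r|}{r}$. When $f\equiv\tfrac{1}{2(\sqrt{2\pi})^{n+1}}$, equations \eqref{12-1} and \eqref{13} show that $\varphi$ evolves by $\partial_{t}\varphi=G+1$ with
\[
G \;=\; -\tfrac12\, e^{r^{2}/2}\, r^{\,p-n-1}\, v^{-(n+p+1)}\, \det\!\big(\delta_{ij}+\varphi_{i}\varphi_{j}-\varphi_{ij}\big) \;<\;0,
\]
$v=\sqrt{1+|\nabla\varphi|^{2}}$ as in \eqref{11}, the strict inequality coming from uniform convexity. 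By the a priori bounds of Lemmata \ref{C0estimate}, \ref{jiaC0estimate}, \ref{C1estimate}, \ref{33} and the two‑sided curvature bound of Lemma \ref{lsbound}, the quantities $r$, $v$, $\det(\delta_{ij}+\varphi_{i}\varphi_{j}-\varphi_{ij})$ and hence $|G|$ are pinched between positive constants depending only on $p$ and the geometry of $M_{0}$.

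The key step is to differentiate $Q$ and evaluate at a point $\xi_{t}$ where $Q(\cdot,t)$ attains its spatial maximum. One has $\partial_{t}Q=2\varphi_{k}\nabla_{k}(\partial_{t}\varphi)=2G\,\varphi_{k}\nabla_{k}\log|G|$, and $\nabla_{k}\log|G|$ produces: (i) a zeroth‑order term $(r^{2}+p-n-1)\varphi_{k}$ coming from the factors $e^{r^{2}/2}$ and $r^{\,p-n-1}$; (ii) terms carrying the factor $\varphi_{i}\varphi_{ik}$ (from $\nabla v$); and (iii) the logarithmic derivative of $\det(\delta_{ij}+\varphi_{i}\varphi_{j}-\varphi_{ij})$, which yields the third‑order term $-(M^{-1})^{ij}\varphi_{ijk}$ plus lower‑order pieces, where $M_{ij}:=\delta_{ij}+\varphi_{i}\varphi_{j}-\varphi_{ij}=\tfrac{v}{r}(\nabla^{2}_{ij}h+h\delta_{ij})>0$. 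At $\xi_{t}$ one has $\nabla Q=0$, hence $\varphi_{i}\varphi_{ik}=0$, which kills all terms of type (ii) and the first‑order parts of (iii); commuting derivatives in $\varphi_{k}\varphi_{ijk}$ via the Ricci identity on $\mathbb{S}^{n}$ — this is where the positive curvature of $\mathbb{S}^{n}$ enters, contributing a favorable $+\,\mathrm{tr}(M^{-1})\,Q$ term that absorbs $-(M^{-1})^{ij}\varphi_{i}\varphi_{j}$ — together with $\nabla^{2}Q\le 0$ and $M^{-1}>0$, one arrives at the clean inequality
\[
\partial_{t}Q \;\le\; 2G\,(r^{2}+p-n-1)\,Q \qquad \text{at } \xi_{t}.
\]
Since $p\ge n+1$, one has $r^{2}+p-n-1\ge r^{2}\ge\big(\min_{\mathbb{S}^{n}\times[0,T)}h\big)^{2}>0$ by the $C^{0}$ bound, and $|G|\ge c>0$ by Lemma \ref{lsbound}; hence $\partial_{t}Q\le -2C_{0}Q$ at spatial maxima for a fixed $C_{0}>0$ depending only on $p$ and $M_{0}$.

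Granting this, Hamilton's trick gives $\frac{d}{dt}\max_{\mathbb{S}^{n}}Q(\cdot,t)\le -2C_{0}\max_{\mathbb{S}^{n}}Q(\cdot,t)$ in the barrier sense, hence $\max_{\mathbb{S}^{n}}Q(\cdot,t)\le\max_{\mathbb{S}^{n}}Q(\cdot,0)\,e^{-2C_{0}t}$, and taking square roots yields $\tfrac{|\nabla r|}{r}\le Ce^{-C_{0}t}$ with $C=\big(\max_{\mathbb{S}^{n}}Q(\cdot,0)\big)^{1/2}$. I expect the main obstacle to be the evolution computation itself: one must track the third‑derivative and curvature (commutator) terms arising from $\nabla_{k}\log\det M$ carefully so that, after using $\nabla Q=0$ and $\nabla^{2}Q\le 0$, everything collapses to the manifestly‑signed expression $2G(r^{2}+p-n-1)Q$. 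The hypothesis $p\ge n+1$ serves precisely to keep this coefficient bounded away from zero — exactly as the exponent $p-(n+1)\ge 0$ was used in the uniqueness argument \eqref{5.10} — while the two‑sided curvature bound of Lemma \ref{lsbound} makes the contractions with $M^{-1}$ and the lower bound on $|G|$ uniform in $t$.
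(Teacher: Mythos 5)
Your proposal follows the same route as the paper: set $w=\log r$, derive its evolution equation from \eqref{12-1}, apply the maximum principle to the scale-invariant quantity $|\nabla w|^2$, use the Ricci identity on $\mathbb{S}^n$ for the third-derivative term arising from $\nabla\log\det(\delta_{ij}+w_iw_j-w_{ij})$, and observe that $p\ge n+1$ makes the surviving zeroth-order coefficient $r^2+p-n-1$ bounded away from zero. The paper carries out the calculation with $Q=\tfrac12|\nabla w|^2$ (so your $Q$ is twice theirs) and keeps the curvature term $(\max_i a^{ii}-\sum a^{ii})|\nabla w|^2\le 0$ explicitly rather than discarding it, but the structure, the use of $\nabla Q=0$ and $\nabla^2Q\le 0$ at the spatial maximum, and the final differential inequality are the same.
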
 
			
			\begin{proof}
				Let $w=\log r$, under a local orthonormal frame, we have 
				\[g_{ij}=e^{2w}(w_iw_j+\delta_{ij})\]
				\[b_{ij}=e^w(1+|\nabla w|^2)^{-\frac{1}{2}}(-w_{ij}+w_iw_j+\delta_{ij})\]
				and
				\[K=\frac{\det b_{ij}}{\det g_{ij}}=(1+|\nabla w|^2)^{-\frac{n+2}{2}}e^{-nw}\det(a_{ij})\]
				where $a_{ij}=-w_{ij}+w_iw_j+\delta_{ij}$.\\
				It is not hard to verify that $w$ satisfies the following PDE
				\begin{equation}\label{e1}
					w_t=-\frac{1}{2}(1+|\nabla w|^2)^{-\frac{p+n+1}{2}}e^{\frac{e^{2w}}{2}}e^{(p-n-1)w}\det(a_{ij})+1
				\end{equation}
				Consider the auxiliary function 
				\[Q=\frac{1}{2}|\nabla w|^2.\]
				At the point where $Q$ attains its spatial maximum, we have
				\[0=\nabla_iQ=\sum w_kw_{ki}\]
				\[0\geq\nabla_{ij}Q=\sum w_{kj}w_{ki}+\sum w_kw_{kij}\]
				Denote $\varrho=\frac{1}{2}(1+|\nabla w|^2)^{-\frac{p+n+1}{2}}e^{\frac{e^{2w}}{2}}e^{(p-n-1)w}$. By differentiating (\ref{e1}), we obtain, at the point achieves its spatial maximum,
				\begin{align*}
					\partial_tQ&=\sum w_kw_{kt}=-\det (a_{ij})\sum w_k\varrho_k-\varrho\sum w_k\nabla_k\det (a_{ij})\nonumber\\
					&=-\varrho\det(a_{ij})(p-n-1+e^{2w})|\nabla w|^2+\varrho\det(a_{ij})\sum a^{ij}\nabla_kw_{ij}w_k
				\end{align*}
				By the Ricci identity, we have
				\[\nabla_kw_{ij}=\nabla_jw_{ik}+\delta_{ik}w_j-\delta_{ij}w_k\]
				Hence
				\begin{align*}
					\partial_tQ&=-\varrho\det(a_{ij})(p-n-1+e^{2w})|\nabla w|^2\nonumber\\
					&+\varrho\det(a_{ij})\sum a^{ij}(Q_{ij}-w_{ik}w_{kj}+w_iw_j-\delta_{ij}|\nabla w|^2)\nonumber\\
					&\leq-\varrho\det(a_{ij})(p-n-1+e^{2w})|\nabla w|^2+\varrho\det(a_{ij})(\max_ia^{ii}-\sum a^{ii})|\nabla w|^2\nonumber\\
					&\leq-C_0Q
				\end{align*}
				We therefore have $Q\leq Ce^{-C_0t},$ wher $C_0$ and $C$ are two positive constants which depends only on $p$ and the geometry of $M_0$.

			\end{proof}

		\begin{proof}[Proof of Corollary \ref{Conver}]			
			By Lemma \ref{ImproGra}, We have that $||\nabla r||\rightarrow 0$ exponentially as $t\rightarrow \infty$. Hence by the interpolation and the a priori estimates, we can get that $r$ converges exponentially to a constant in the $C^\infty$ topology as $t\rightarrow\infty.$

		\end{proof}		
				\begin{proof}[Proof of Theorem \ref{mainthm2}]
					Now we consider the case $0<p<n+1$. Since $f$ is even and $M_0$ is origin-symmetric, the solution remains
					origin-symmetric for $t > 0$. The long time existence of the flow \eqref{flow02} now follows from
					Theorem \ref{jiajielun}. As in the proof of Theorem \ref{mainthm}, $M_t$ converges by a subsequence to the solution of equation (\ref{flow01}).\\

				\end{proof}
				\bigskip
				
				\noindent{\bf{Acknowledgement.} }  The research of the authors has been supported by NSFC (Nos. 12031017 and 11971424).

				\newpage

			\end{document}